\documentclass[11pt]{article}

\usepackage{amsmath}
\usepackage{amssymb}
\usepackage{amsthm}
\usepackage{tikz}
\usepackage{mathtools}
\usepackage{arydshln}

\usepackage{multirow}

\newtheorem{theorem}{Theorem}[section]
\newtheorem{lemma}{Lemma}[section]
\newtheorem{proposition}{Proposition}[section]

\newtheorem{corollary}{Corollary}[section]
\theoremstyle{definition}

\newtheorem{problem}{Problem}[section]

\newtheorem{definition}{Definition}[section]

\theoremstyle{plain}

\newcommand{\varA}{\mathcal{A}}

\newcommand{\varP}{\mathcal{P}}
\newcommand{\varN}{\mathcal{N}}
\newcommand{\varL}{\mathcal{L}}
\newcommand{\varR}{\mathcal{R}}

\newcommand{\tabP}{\mathmakebox[\widthof{$\mathcal{P}$}]{\mathcal{P}}}
\newcommand{\tabN}{\mathmakebox[\widthof{$\mathcal{P}$}]{\mathcal{N}}}
\newcommand{\tabL}{\mathmakebox[\widthof{$\mathcal{P}$}]{\mathcal{L}}}
\newcommand{\tabR}{\mathmakebox[\widthof{$\mathcal{P}$}]{\mathcal{R}}}

\newcommand{\rmL}{\mathbf{L}}
\newcommand{\rmR}{\mathbf{R}}
\newcommand{\rmP}{\mathbf{P}}
\newcommand{\rmN}{\mathbf{N}}

\newcommand{\x}{\mathbf{x}}
\newcommand{\e}{\mathbf{o}}
\newcommand{\m}{\mathbf{T}}

\newcommand{\varG}{\mathcal{G}}
\newcommand{\varH}{\mathcal{H}}

\newcommand{\varU}{\mathcal{U}}
\newcommand{\varV}{\mathcal{V}}
\newcommand{\varW}{\mathcal{W}}
\newcommand{\varT}{\mathcal{T}}

\newcommand{\nxt}{\operatorname{next}}
\newcommand{\prv}{\operatorname{prev}}
\newcommand{\mex}{\operatorname{mex}}

\newcommand{\col}{\textbf{\;\!:\!\;}}

\begin{document}

\title{A Finite Automaton Approach to Combinatorial Games}

\author{Kai Liang\thanks{School of Mathematics and Statistics, Xidian University, 266 Xinglong Section, Xifeng Road, 710126 Xi'an, Shaanxi, China \\ 24071213197@stu.xidian.edu.cn}}
\date{\today}
\maketitle

\begin{abstract}
    This study applies finite automata to the automatic solving of a variety of combinatorial games.
    For games whose positions and moves can be represented as regular languages and their operations,
    we design a two-stage automatic solving algorithm: first, construct a candidate finite automaton to determine the $\varP$- and $\varN$-positions, and then perform rigorous formal verification on this automaton; once verified, a complete solution of the game is obtained.

    For partizan octal games, we introduce a generalized misère quotient, overcoming the limitation that traditional theory applies only to impartial games.
    Using the above algorithm, we successfully solve the majority of two-digit partizan octal games, and based on these results, we propose a partizan version of Guy's conjecture.
    We also successfully solve a considerable number of partizan octal games under misère play, and give a conjecture on the structure of those games exhibiting ``algebraic periodicity'' among them.

    For Kotzig's nim, we resolve the most important related conjecture: we prove that the outcomes and SG values are periodic under both normal and misère play (including their partizan versions).
    Our algorithm successfully solves several small-scale cases, including misère play and partizan versions.

    This study pioneers a new theoretical tool and algorithmic paradigm for the automatic solving of combinatorial games, and has broad prospects for further extension and application in the field of combinatorial game theory.
\end{abstract}

\section{Introduction}

In combinatorial game theory, the use of finite automata for theorem proving has seen significant progress in recent years.

Wythoff's game, owing to the deep connection between its $\varP$-positions and the Fibonacci sequence, has become the most typical application scenario of this approach.
Fraenkel proved that the $\varP$-positions of Wythoff's game can be directly characterised by Fibonacci representations~\cite{1982Wyth}.
Building on this, Landman used finite automata to prove the arithmetic periodicity of the SG values of Wythoff's game~\cite{2002Wyth}.
Duch\^ene et al. systematically studied the problems of adding and restricting moves while preserving the $\varP$-positions of Wythoff's game; their proofs rely on Fibonacci words, automatic sequences, and related numeration systems,
and they gave methods for generating $\varP$-positions and encodings of addable moves via two-dimensional morphisms~\cite{2010Wyth}.
In recent years, the automatic proof tool Walnut, based on the B\"uchi--Bruyère theorem, has been introduced into this field, providing a powerful supplement to traditional methods.
Mignoty, Renard, Rigo, and Whiteland were the first to systematically apply Walnut to combinatorial game theory; they not only automatically reproved several known results on variants of Wythoff's game,
but also proved a conjecture of Duch\^ene et al. concerning redundant moves and proposed several new conjectures~\cite{2025Auto,2025Wyth}.

Building on previous work that mainly focused on Wythoff's game as a specific impartial game,
this study proposes a systematic algorithm that extends the finite-automaton method to a broader class of games, achieving notable breakthroughs at both the theoretical and computational levels.

First, for partizan octal games and Kotzig's nim, we encode positions and moves as strings and operations on strings, respectively.
Such a representation is not only suitable for these two classes of games but also enjoys good extensibility, and can be generalized to more combinatorial games whose positions are based on multiple heaps or one-dimensional boards, thus laying the foundation for subsequent automaton modelling.

Based on the above representation, we design a two-stage automatic solving algorithm:
In the first stage, the algorithm attempts to construct a finite automaton that can determine the positions with each outcome or SG value;
in the second stage, the algorithm automatically performs rigorous formal verification of the constructed automaton using automaton operations.
If verification succeeds, the automaton constitutes a complete solution of the game: for any position, by scanning its encoding from left to right, one can determine its outcome or SG value in $O(n)$ time and $O(1)$ space, where $n$ is the length of the position encoding.

For partizan octal games, we overcome the difficulty that the traditional misère quotient theory applies only to impartial games, and introduce the notion of a generalized misère quotient based on the action of a semigroup on a set.
This construction can be fully generated automatically from the results of the above automatic solving algorithm, thus for the first time filling the theoretical gap for non-dicotic partizan octal games under misère play.
Through analysis of the computational results, we propose a strengthened version of Guy's conjecture.
Meanwhile, for a class of misère octal games exhibiting ``algebraic periodicity'' that the algorithm cannot yet handle, we also give a conjecture on their structure.

For Kotzig's nim, by using finite-automaton methods to study their linear variants, we prove the most crucial related conjecture:
under both normal and misère play, the outcomes and SG values of all impartial and partizan Kotzig's nim are (ultimately) periodic.
Our algorithm successfully solves several new small-scale Kotzig's nim, including for the first time cases under misère play and partizan versions, filling a gap in previous research.

\section{Definitions and Notation}
\subsection{Combinatorial Games}

The games studied in this paper are \textit{combinatorial games} satisfying the following conditions:

(1)~\textit{Two players alternate}: the two players take turns moving.

(2)~\textit{Perfect information}: all rules, the current state, and all possible moves are public and transparent to both players.

(3)~\textit{No randomness}: the game involves no randomness such as dice rolling or card shuffling; the outcome is entirely determined by the players' decisions.

Sometimes we also require combinatorial games to satisfy the following conditions:

(1)~\textit{Normal play}: the player who makes the last move wins; equivalently, the player who cannot move loses.
The opposite of this is \textit{misère play}, where the player who cannot move wins.

(2)~\textit{Impartiality}: the two players have exactly the same legal moves available in any given position (i.e., determined solely by the position, not by the identity of the player).
Games satisfying impartiality are called \textit{impartial games}; otherwise they are called \textit{partizan games}.

In this paper, we generally denote by $\varU$ the set of all possible positions of a given game.
For a position $u\in\varU$, we denote by $\nxt(u)$ the set of all its (reachable in one move) options, and
\[
    \prv(u) \coloneq \{u' \mid u'\in\varU,\ u\in\nxt(u')\}
\]
the set of all its predecessors, i.e., the inverse of $\nxt(\cdot)$.

Furthermore, the \textit{outcome} of a position $u$ is defined as follows: if the current player has a winning strategy, then $u$ is an $\varN$-position (denoted $o(u)=\varN$); otherwise it is a $\varP$-position (denoted $o(u)=\varP$).
We denote by $\varP$ and $\varN$ the sets of all $\varP$-positions and $\varN$-positions, respectively, and by $\varT$ the set of all terminal positions (positions with no legal moves).
All games considered in this paper are \textit{acyclic}, i.e., from any position, no matter how one moves, a terminal position is always reached in finitely many steps.
In this setting, the outcome of a game can be defined recursively as follows:

\begin{definition}\label{def:outcome}

    For any $u\in\varU$:

    \begin{align*}
        o(u)= \varP &\iff \nexists u'\in \nxt(u),\ o(u')=\varN; \\
        o(u)= \varN &\iff \exists  u'\in \nxt(u),\ o(u')=\varP.
    \end{align*}
\end{definition}

We denote by $o^-(u)$ the outcome of $u$ under misère play. Its definition is obtained from the above definition by flipping the outcome of terminal positions:

\begin{definition}\label{def:outcome_mis}

    \begin{align*}
        o^-(u)= \varP &\iff (\nexists u'\in \nxt(u),\ o^-(u')=\varN) \text{~~and~~} u\notin \varT; \\
        o^-(u)= \varN &\iff (\exists  u'\in \nxt(u),\ o^-(u')=\varP) \text{~~or~~}  u\in \varT.
    \end{align*}
\end{definition}

For impartial games, the \textit{SG value} (also called Sprague--Grundy value or Nim-value) provides a more refined characterisation of the eventual outcome of positions; it is defined as follows:
\begin{definition}\label{def:SG}
    For any position $u\in\varU$,
    \[
    \varG(u) \coloneq
    \begin{cases}
        0, & u\in\varT;\\
        \mex(\{ \varG(u') \mid u'\in\nxt(u)\}), & u\notin\varT.
    \end{cases}
    \]
    where the $\mex$ function
    \[
        \mex(S) \coloneq \min (\mathbb{N} \setminus S)
    \]
    denotes the smallest nonnegative integer not appearing in a given set $S$ of nonnegative integers.
\end{definition}

For two impartial games $G_1, G_2$ (with position sets $\varU_1, \varU_2$, respectively),
we define their \textit{disjunctive sum} $G_1 + G_2$, whose positions are combinations of positions from the two games:
\[
    u_1 + u_2,\quad u_1\in\varU_1,\ u_2\in\varU_2,
\]
and whose move rule is: choose one component $u_i$ and make a move in $G_i$ according to its rules, leaving the other component unchanged; the whole position is terminal if and only if both components are terminal.
The following Sprague--Grundy theorem is a cornerstone of combinatorial game theory:

\begin{theorem}\label{thr:sg}

    (Sprague--Grundy theorem) For any impartial game $G$ under normal play:

    (1)~For any position $u\in \varU$,
    \[
        o(u) =
        \begin{cases}
            \varP, & \varG(u)=0; \\
            \varN, & \varG(u)>0.
        \end{cases}
    \]

    (2)~For any positions $u_1, u_2$ of impartial games $G_1, G_2$ under normal play,
    \[
        \varG(u_1+u_2) = \varG(u_1) \oplus \varG(u_2),
    \]
    where $\oplus$ denotes the \textit{bitwise XOR} (nim-sum), i.e., the bitwise exclusive or of the binary expansions of two natural numbers.
\end{theorem}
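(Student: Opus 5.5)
Both parts will be proved by well-founded induction on positions. This is legitimate because all games considered are acyclic, so every position has finite \emph{birthday} (the maximum number of moves to reach $\varT$), and for a sum $u_1+u_2$ we can induct on the sum of the birthdays of the components.

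For part (1), we show by induction on the birthday of $u$ that $\varG(u)=0$ exactly when $o(u)=\varP$, comparing Definition~\ref{def:SG} with Definition~\ref{def:outcome}.
If $u\in\varT$, then $\varG(u)=0$; also $\nxt(u)=\emptyset$, so no option is an $\varN$-position and $o(u)=\varP$.
Otherwise, by the definition of $\mex$, $\varG(u)=0$ iff no option $u'$ has $\varG(u')=0$. By the induction hypothesis this says no option is a $\varP$-position, i.e.\ every option is an $\varN$-position (outcomes are exhaustive). This is exactly the condition $o(u)=\varP$.
Conversely, $\varG(u)>0$ iff some option has SG value $0$, i.e.\ some option is a $\varP$-position, which means $o(u)=\varN$.

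For part (2), set $a=\varG(u_1)$ and $b=\varG(u_2)$. By the induction hypothesis on options of $u_1+u_2$, every option has SG value $a'\oplus b$ or $a\oplus b'$, where $u_1'\in\nxt(u_1)$ with $a'=\varG(u_1')$, or $u_2'\in\nxt(u_2)$ with $b'=\varG(u_2')$. Two facts then give $\varG(u_1+u_2)=a\oplus b$ via the $\mex$ definition.
\begin{itemize}
\item[(i)] \emph{No option has value $a\oplus b$.} Since $a'\neq a$ (as $a=\mex$ of a set containing $a'$), we get $a'\oplus b\ne a\oplus b$; symmetrically $a\oplus b'\neq a\oplus b$.
\item[(ii)] \emph{Every $c<a\oplus b$ is attained by some option.} Let $d=c\oplus a\oplus b\neq 0$, and let $k$ be the highest set bit of $d$. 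Since $c<a\oplus b$, bit $k$ is $0$ in $c$ and $1$ in $a\oplus b$, so bit $k$ is set in exactly one of $a,b$; say in $a$. Then $a\oplus d<a$, because bits above $k$ agree and bit $k$ drops. By the $\mex$ property there is an option $u_1'$ with $\varG(u_1')=a\oplus d$, and then $(a\oplus d)\oplus b=c$.
\end{itemize}
The terminal case is consistent: if both components are terminal, the value is $0=0\oplus0$.

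The main obstacle is the bit argument in (ii); everything else is routine unfolding of the definitions. One should also note that $u_1+u_2$ is terminal iff both components are, so the $\mex$ branch applies otherwise, and that the definition of $\mex$ is used in both directions: $a$ is absent from the option values, and every smaller value is present.
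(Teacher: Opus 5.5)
The paper gives no proof of this statement. It quotes the Sprague--Grundy theorem as classical background and uses it without argument, so there is no in-paper route to compare against.

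Your proof is the standard one, and it is correct.
\begin{itemize}
\item[(1)] This part is an induction using $\mex(S)=0 \iff 0\notin S$.
\item[(2)] This part is an induction over pairs of positions. It uses both halves of the $\mex$ characterisation: $a$ is absent from the option values, and every smaller value is present. The highest-bit argument correctly produces an option of value $c$ for each $c<a\oplus b$. Any component it selects has positive value, hence is non-terminal, so the needed option exists.
\end{itemize}

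Two small points of hygiene.

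\begin{itemize}
\item[(a)] Definition~\ref{def:outcome} as printed says $o(u)=\varP$ iff no option is an $\varN$-position. That is a misprint for ``no option is a $\varP$-position''. The printed pair of conditions is already inconsistent for a position whose only option is terminal: that position would be both $\varP$ and $\varN$. Corollary~\ref{outcome_cond}(2) uses the standard version. Your inductive step in part (1) uses the correct form (``every option is an $\varN$-position''), but your terminal case quotes the misprinted one. Both happen to agree when there are no options. Still, say explicitly that you are using the standard definition.
\item[(b)] Acyclicity alone makes the move relation well-founded, and that is all your induction needs. For sums, use the product order on pairs of component positions. The stronger claim that every position has a finite birthday requires finitely many options per position, via K\"onig's lemma. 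The $\mathbb{N}$-valued $\mex$ in Definition~\ref{def:SG} tacitly assumes this anyway, and it holds for every game in the paper. You can either state this assumption or drop the birthday language in favour of plain well-founded induction.
\end{itemize}
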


\subsection{Finite State Automata}

An alphabet $\varSigma$ is a finite set of symbols.
A sequence of symbols of the form
\[
    w=c_1 c_2\ldots c_n, \quad c_n \in\varSigma
\]
is called a \textit{word}, and its length $n$ is denoted by $|w|$.
The word of length $0$ is called the \textit{empty word}, denoted by $\epsilon$.
We denote by
\[
    \operatorname{suffix}_m(w) \coloneq
    \begin{cases}
        \epsilon,           & m=0; \\
        c_1 c_2 \ldots c_m, & 0 < m \leq n; \\
        w,                  & m > n
    \end{cases}
\]
the suffix of $w$ of length $m \in\mathbb{N}$.

We denote by $\varSigma^*$ the set of all words over $\varSigma$ of arbitrary length (including the empty word $\epsilon$), and subsets thereof are called \textit{languages}.
For words $w, v\in\varSigma^*$, we write $wv$ for the word obtained by concatenating them. We write $w^n$ for the word consisting of $n$ repetitions of $w$, and stipulate that $w^0=\epsilon$ for every word $w$.
Clearly $\varSigma^*$ forms a monoid under concatenation (called the \textit{free monoid} generated by $\varSigma$), with identity element the empty word $\epsilon$, which when concatenated with any word yields the original word.

A \textit{nondeterministic finite automaton} (NFA) is a quintuple
\[
    \varA= \langle \varSigma, Q, \delta, Q_0, Q_F \rangle, 
\]
where

(1)~$\varSigma$ is a finite set of \textit{symbols}, called the \textit{alphabet}.

(2)~$Q$ is a finite set of \textit{states}, called the \textit{state set}.

(3)~$\delta$ is a mapping from $Q \times \varSigma$ to the power set of $Q$, called the \textit{transition mapping}.

(4)~$Q_0 \subseteq Q$ is called the \textit{initial state set}.

(5)~$Q_F \subseteq Q$ is called the \textit{final state set} (also referred to as \textit{accepting states} in some literature).

If the transition mapping $\delta$ is deterministic (each state $q$ and symbol $c$ corresponds to a unique next state $\delta(q, c)$),
and $Q_0$ contains exactly one state (in which case this state is usually denoted by $q_0$), then the automaton is called a \textit{deterministic finite automaton} (DFA).
We refer to DFAs and NFAs collectively as FAs.

For convenience, in this paper we relax the definition of a DFA so that the next state corresponding to $(q, c)$ is either unique or empty.
This makes no difference in practice, since we may introduce a \textit{sink state} $q_{\varnothing}$,
treat $\delta(q, c)=\varnothing$ as $\delta(q, c)=q_{\varnothing}$, with all subsequent transitions staying in $q_{\varnothing}$ and never reaching a final state, thereby achieving the same effect.
In actual implementation, we do not need to record the sink state or transitions into it; hence it will be ignored in the remainder of this paper when discussing DFAs.

For any $P, P'\subseteq Q$, $c\in \varSigma$, $w\in\varSigma^*$, we write:
\[
    P\cdot c w = P' w \iff P' = \bigcup_{p\in P} \delta(p, c).
\]

Every FA $\varA$ defines a language over $\varSigma^*$. The languages defined by a DFA and an NFA are respectively
\begin{align*}
    L(\varA) &\coloneq \{w \mid q_0 \cdot w \in Q_F\};\\
    L(\varA) &\coloneq \{w \mid Q_0 \cdot w \cap Q_F \neq \varnothing\}.
\end{align*}

In other words: read the symbols of the word $w= c_1 \ldots c_n$ one by one, applying them on the right to the initial state (or set of states),
and repeatedly perform state transitions according to the mapping $\delta$, until all symbols are consumed (requiring exactly $|w|$ transitions).
For a DFA, the word $w$ is accepted if and only if the final state is a final state;
whereas for an NFA, both the initial state and the states reached at each step are not unique, and $w$ is accepted if and only if the resulting set of states intersects the final state set (i.e., there exists some path reaching a final state).

For example, the following automaton can be used to recognise all words over $\varSigma=\{\x, \e\}$ that contain an odd number of $\e$'s:
\begin{align*}
    \varSigma &= \{\x, \e\}; \\
    Q         &= \{0, 1\}; \\
    \delta    &= \{0\cdot \x = 0,\;\; 1\cdot \x = 1,\;\; 0\cdot \e = 1,\;\; 1\cdot \e = 0\}; \\
    Q_0       &= \{0\}; \\
    Q_F       &= \{1\}.
\end{align*}
The idea is to read the symbols from left to right, using the state to record the parity of the number of $\e$'s, and accept when the final count is odd.

It is easy to see that for a DFA, the effect of $\delta$ is nothing but a right action of the monoid $\varSigma^*$ on the set $Q$.

A language $W\subseteq\varSigma^*$ is called \textit{regular} if it can be recognised by some NFA over the alphabet $\varSigma$.
This paper will make use of many theorems about regular languages and FAs; proofs of these theorems can be found in standard textbooks on formal languages and automata, and we will not elaborate on them here.

\begin{theorem}\label{reg_determ}
    Every regular language (recognisable by an NFA) can also be recognised by some DFA. Given an NFA, there exists an algorithm that constructs an equivalent DFA (recognising the same language).
\end{theorem}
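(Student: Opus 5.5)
The natural approach is the classical subset construction of Rabin and Scott. We start from an NFA $\varA=\langle \varSigma, Q, \delta, Q_0, Q_F\rangle$ and build a DFA $\varA'=\langle \varSigma, Q', \delta', q_0', Q_F'\rangle$ whose states are \emph{sets} of states of $\varA$. Concretely, we take
\[
    Q' \coloneq 2^{Q}, \qquad q_0' \coloneq Q_0, \qquad \delta'(P, c) \coloneq \bigcup_{p\in P}\delta(p,c), \qquad Q_F' \coloneq \{P\subseteq Q \mid P\cap Q_F\neq\varnothing\}.
\]
Since $Q$ is finite, $Q'$ is finite, with at most $2^{|Q|}$ elements. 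The transition $\delta'$ is single-valued, so $\varA'$ is a DFA in the sense of the paper. In practice one keeps only the subsets reachable from $Q_0$, found by a breadth-first search over $\varSigma$. This gives the algorithmic part of the statement: the procedure is a finite search that terminates because $2^Q$ is finite.

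The key step is to show that the DFA run simulates the set-valued NFA run. We prove by induction on $|w|$ that the state of $\varA'$ reached from $q_0'$ after reading $w$ equals the set $Q_0\cdot w$ reached by $\varA$.

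\begin{itemize}
\item The base case $w=\epsilon$ is immediate, since both sides equal $Q_0$.
\item For the inductive step, write $w=vc$ (or equivalently peel symbols from the left, matching the paper's convention $P\cdot cw = P'w$). The definition of $\delta'$ coincides exactly with the paper's definition of the set action $P\cdot c$, so the identity propagates.
\end{itemize}

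Acceptance then follows directly. The NFA accepts $w$ if and only if $Q_0\cdot w\cap Q_F\neq\varnothing$. This holds if and only if the DFA's state $q_0'\cdot w$ lies in $Q_F'$, which is exactly the condition for the DFA to accept $w$. Hence $L(\varA')=L(\varA)$.

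The only mild subtlety is the paper's relaxed convention allowing empty transitions. Here the empty subset $\varnothing$ plays the role of the sink state $q_\varnothing$: it maps to itself under every symbol and is never final. It can therefore be dropped, consistent with the paper's convention. I do not expect any real obstacle. The one point needing care is stating the induction so that it matches the paper's left-to-right notation for the action on sets of states.
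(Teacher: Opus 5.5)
Your proposal is correct: the paper gives no proof of this theorem and simply defers to standard automata textbooks, and the Rabin--Scott subset construction you give (states $2^{Q}$, initial state $Q_0$, union transitions, final states meeting $Q_F$, with $\varnothing$ as the sink) is exactly the standard argument being cited. One small tidy-up: if you peel symbols from the left as in the paper's convention $P\cdot cw = P'w$, state the induction for an arbitrary starting set $P\subseteq Q$ rather than only $Q_0$; if you instead write $w=vc$, note that $(Q_0\cdot v)\cdot c = Q_0\cdot vc$.
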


\begin{theorem}\label{reg_finite}
    Every language consisting of only finitely many words is regular.
\end{theorem}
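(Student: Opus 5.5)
The plan is to build an explicit NFA for a finite language. Since regular languages were defined as those recognised by some NFA over $\varSigma$, that is all we need.

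Let $W=\{w_1,\dots,w_k\}\subseteq\varSigma^*$ be finite. The first step is to treat a single word $w=c_1c_2\ldots c_n$. I will use the states $Q_w=\{0,1,\dots,n\}$, a single initial state $0$, and a single final state $n$. The only transitions are $\delta(i-1,c_i)=\{i\}$ for $1\le i\le n$; every other pair $(q,c)$ is sent to $\varnothing$. An induction on the length of the input shows that, for any word $v$, the set $\{0\}\cdot v$ equals $\{|v|\}$ if $v$ is a prefix of $w$, and is empty otherwise. It follows that $v$ is accepted if and only if $v=w$. The empty word is covered by the case $n=0$: there the initial state $0$ is also final, so $\epsilon$ is accepted and nothing else is.

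The second step is to combine these automata. I will take a disjoint union of the automata $\varA_{w_1},\dots,\varA_{w_k}$, relabelling states as pairs $(j,i)$. The initial state set is the union of the individual initial states, the final state set is the union of the individual final states, and $\delta$ acts componentwise. Because no transitions cross between components, for any $v$ the set $Q_0\cdot v$ is the disjoint union of the sets reached in each component separately. Hence $Q_0\cdot v$ meets $Q_F$ exactly when $v=w_j$ for some $j$, so the language of this NFA is precisely $W$. When $W=\varnothing$, I take an automaton with one state and no final states.

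The only step needing care is the single-word case, in particular confirming that the definition of $P\cdot cw$ correctly propagates the empty set. Once $\varnothing\cdot c=\varnothing$ is noted, the induction is immediate. Alternatively, one could build a DFA directly as a prefix trie with final states at the words of $W$; that would also give recognisability via Theorem~\ref{reg_determ}.
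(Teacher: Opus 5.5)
Your construction is correct: a chain NFA for each word, then a disjoint union with $Q_0$ and $Q_F$ taken as unions, plus a trivial automaton for $W=\varnothing$. It is the standard textbook argument the paper defers to, since the paper gives no proof of this statement and only refers to standard texts on formal languages. One small slip is in your closing aside: a prefix-trie DFA already witnesses regularity directly, because under the paper's definitions a (relaxed) DFA is a special case of an NFA, whereas Theorem~\ref{reg_determ} goes in the opposite direction (NFA to DFA) and is not what is needed there.
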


Regular languages are closed under many language operations in a very robust way, meaning that when the inputs are regular languages, the output is again regular, and there exist finite-step algorithms that construct a DFA or NFA recognising the output. For example:

\begin{theorem}\label{reg_closure}
    Regular languages are closed under the following operations, and these closures are effective:

    (1)~Concatenation of regular languages:
    \[
        W_1 W_2 \coloneq \{w_1 w_2 \mid w_1 \in W_1,\ w_2\in W_2\}.
    \]

    (2)~Basic set operations, including intersection $\cap$, union $\cup$, and set difference $-$.
Note that here we do not use $\setminus$ for set difference, in order to distinguish it from left quotient defined below.

    (3)~Kleene plus and star:
    \begin{align*}
        W^+ & \coloneq \{w_1 w_2\ldots w_n \mid w_1, w_2, \ldots, w_n\in W,\ n\in\mathbb{N}^+\}; \\
        W^* & \coloneq \{\epsilon\} \cup W^+.
    \end{align*}

    (4)~Substitution of a factor $v$ by $v'$, where $v,v'\in\varSigma^*$, defined as
    \[
        \mathrm{Sub}_{v\mapsto v'}(W) \coloneq \{x v' y \mid \exists x,y\in\varSigma^*,\ x v y\in W\}.
    \]

    (5)~Left and right quotients of words, defined as
    \begin{align*}
        x \setminus W & \coloneq \{y \mid xy\in W\}; \\
        W ~/~ x     & \coloneq \{y \mid yx\in W\}.
    \end{align*}
\end{theorem}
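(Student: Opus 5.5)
We argue each closure property by building an automaton directly from automata for the inputs. Throughout, $\varA_i=\langle \varSigma, Q_i,\delta_i,Q_0^{(i)},Q_F^{(i)}\rangle$ denotes an NFA for $W_i$. By Theorem~\ref{reg_determ} we may pass to a DFA whenever that is convenient. Every construction is a finite manipulation of finite objects, so effectiveness will be immediate once we check that each construction recognises the right language.

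For (1) and (3) we use the disjoint union of state sets. Since the paper's NFAs have no $\epsilon$-transitions, we simulate the ``jump'' between automata explicitly. For $W_1W_2$, take the state set $Q_1\sqcup Q_2$. For every $p\in Q_1$ and $c\in\varSigma$ with $\delta_1(p,c)\cap Q_F^{(1)}\neq\varnothing$, add the transitions $\delta_2(q,c)$ for all $q\in Q_0^{(2)}$ to $\delta(p,c)$. If $\epsilon\in W_1$, also include $Q_0^{(2)}$ among the initial states. The final states are $Q_F^{(2)}$, together with $Q_F^{(1)}$ if $\epsilon\in W_2$. The two inclusions of languages then follow by splitting an accepting run at the jump.

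For $W^+$ the same gadget loops back into the automaton itself: whenever a transition enters $Q_F$, also add the transitions leaving $Q_0$. Then $W^*=W^+\cup\{\epsilon\}$ follows from (2) together with Theorem~\ref{reg_finite}.

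For (2), determinize both automata and use the product DFA on $Q_1\times Q_2$. Intersection, union and difference correspond to the final-state sets $Q_F^{(1)}\times Q_F^{(2)}$, $(Q_F^{(1)}\times Q_2)\cup(Q_1\times Q_F^{(2)})$ and $Q_F^{(1)}\times(Q_2- Q_F^{(2)})$, respectively. The one point to handle carefully is the relaxed DFA convention: for the difference we must first reinstate the sink state $q_\varnothing$ so that the transition mappings are total. Otherwise words rejected by $\varA_2$ because they fall into the sink would disappear from the product.

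For (5), take a DFA for $W$. The left quotient $x\setminus W$ is recognised by the same DFA with initial state $q_0\cdot x$, or by the empty automaton if that transition is undefined. The right quotient $W/x$ is recognised by the same DFA with final-state set $\{q\mid q\cdot x\in Q_F\}$, which is computable because $Q$ and $x$ are finite.

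For (4) we expect the main obstacle, since it requires a careful NFA construction. We would first try to reduce it to the earlier parts:
\[
\mathrm{Sub}_{v\mapsto v'}(W)=\bigcup_{(p,q)}\; L_{0\to p}\,v'\,L_{q\to F},
\]
where $(p,q)$ ranges over pairs of states of a DFA for $W$ with $p\cdot v=q$. Here $L_{0\to p}$ is the language of words leading from $q_0$ to $p$, and $L_{q\to F}$ is the language of words leading from $q$ into $Q_F$. Both are regular, since they are recognised by the same DFA with the initial and final states changed. The singleton $\{v'\}$ is regular by Theorem~\ref{reg_finite}, and the union is finite. By (1) and (2) the right-hand side is therefore regular.

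It remains to check the displayed equality. A word $xvy\in W$ determines $p=q_0\cdot x$ and $q=p\cdot v$, so $xv'y$ lies in the corresponding summand. Conversely, a word in the summand for $(p,q)$ has the form $xv'y$ with $q_0\cdot x=p$ and $q\cdot y\in Q_F$, hence $q_0\cdot xvy\in Q_F$, so $xvy\in W$. This is the step to verify most carefully, but it relies on determinism of the DFA in an essential way, so we expect it to go through.
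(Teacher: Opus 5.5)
The paper gives no proof of this theorem; it defers to standard automata textbooks, so your self-contained constructions have to stand on their own. Part (5) is sound. Your reduction of (4) to (1), (2) and a change of initial/final states is also sound; determinism is convenient there but not essential. Part (2) is fine up to a small point noted below.

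The concatenation gadget in (1), however, is wrong as written, and the error propagates to (3) and (4). When $\delta_1(p,c)\cap Q_F^{(1)}\neq\varnothing$ you add $\delta_2(q,c)$, $q\in Q_0^{(2)}$, to $\delta(p,c)$. The single letter $c$ is then read both as the last letter of $w_1$ and as the first letter of $w_2$. Apart from the $\epsilon$ cases, your automaton recognises $\{ucy \mid uc\in W_1,\ cy\in W_2,\ c\in\varSigma\}$ instead of $W_1W_2$. For instance, take $W_1=W_2=\{a\}$ and the NFAs with $\delta_i(s_i,a)=\{f_i\}$. You get $\delta(s_1,a)=\{f_1,f_2\}$, so $a$ is accepted and $aa$ is rejected. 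Your promised check ``by splitting an accepting run at the jump'' fails exactly here. Read as the same gadget, your $W^+$ construction has the same defect: for $W=\{a\}$ it adds nothing new and recognises $\{a\}$ rather than $\{a\}^+$.

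The repair is to make the trigger and the action match. You can do this in either of two ways:
(i) whenever $\delta_1(p,c)\cap Q_F^{(1)}\neq\varnothing$, add the set $Q_0^{(2)}$ itself to $\delta(p,c)$, so that the next letter is the first one read by $\varA_2$; or
(ii) for every $p\in Q_F^{(1)}$ and $c\in\varSigma$, add $\bigcup_{q\in Q_0^{(2)}}\delta_2(q,c)$ to $\delta(p,c)$.
Your treatment of $\epsilon\in W_1$ and $\epsilon\in W_2$ can stay unchanged. For $W^+$, analogously add $Q_0$ to $\delta(p,c)$ whenever $\delta(p,c)\cap Q_F\neq\varnothing$. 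The run-splitting argument then goes through.

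The smaller point in (2): with the paper's partial DFAs you must reinstate the sink for the union as well, not only for the difference. The product gets stuck as soon as either factor does, so it loses words accepted by only one of the two automata. Alternatively, take the disjoint union of the two NFAs for the union.
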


\section{Partizan Octal Games}
\subsection{Background}

\textit{Octal games} are an important class of ``taking and breaking'' games in combinatorial game theory, discovered and introduced by Richard Guy in 1956~\cite{Octal1956}.
The game is played on several heaps of tokens. Two players alternate moves; on each turn, according to the rules encoded by the game's code, a player removes a certain number of tokens from one heap, and may sometimes split the remaining tokens into two nonempty heaps.
The player who cannot move loses, and the other player wins.

Standard octal games are among the most classical impartial games, i.e., the two players have exactly the same move options;
\textit{partizan octal games}, in contrast, assign different move rules to the two players (usually called Left and Right)~\cite{Part1976}.
Fraenkel and Kotzig first introduced this class of games in 1982~\cite{PartOctal1987}, and proved that their special form, partizan subtraction games, always have periodic outcomes.

In 2004, Plambeck first systematically proposed the misère quotient method for localised analysis of impartial games under misère play~\cite{MisQuot2006}.
Siegel later developed the full theory and suggested that ``a partizan generalization exists''~\cite{PartMisQuot2007}.
Allen attempted to directly extend the notions of indistinguishability and the misère monoid theory from impartial games to the partizan setting~\cite{PartMis2010}.
However, Allen proved that once the game universe contains simple partizan games such as
\[
    1 = \{0 \mid \},
\]
this property no longer holds, which constitutes a fundamental obstacle to generalising the misère quotient method to the partizan case~\cite{PartMisQuot2015}.

Subsequent research on partizan octal games has focused on restricting the game universe to allow for a smooth generalization.
Allen's doctoral thesis and subsequent work found that if the game universe is restricted to ``all-small games'', then the property
\[
    *+* \equiv 0
\]
still holds, providing a viable breakthrough for the generalization~\cite{PartMisQuot2015}.
Milley and Renault studied the universe of ``dead-end games'' and computed its misère monoid~\cite{PartMisQuot2013};
Siegel also published a systematic study of this universe in 2023~\cite{PartMisQuot2023}.
Nowakowski and Ottaway studied a class of vertex-deletion games on graphs and pointed out that these are a special case of partizan octal games~\cite{VertexDel2013}.
That paper explicitly noted that the periodicity study of partizan octal games was still a gap at the time:
``nothing is known about partizan octal games'', and used this as a motivation for studying their graph-theoretic model.

In this chapter, we will define a more general misère quotient by relaxing the algebraic structure, which can overcome the limitations of the game universe and directly perform misère analysis on general partizan octal games.
Using relevant knowledge from automata theory, we design algorithms for searching and verifying misère quotients of games,
which, after validation, can determine the misère quotients of most partizan octal games with relatively simple codes, thereby providing complete solutions to the games.
The remaining unsolved partizan octal games are also expected to be solvable with further increases in computational power.
Based on observations from our results, we propose a ``partizan version of Guy's conjecture''.

\subsection{Definitions and Notation}

We represent a position of an octal game with $n$ heaps by a sequence $u = (a_1, a_2, \ldots, a_n)$ of natural numbers,
where each $a_i \in\mathbb{N}$ denotes the size (number of tokens) of the $i$-th heap.
We denote $\varH^*$ the set of all positions with an arbitrary number of heaps (including the empty position with $0$ heaps).
Octal games are represented by codes of the form
\[
    d_0.d_1 d_2 d_3 \ldots
\]
in octal notation (which gives the games their name), where each $d_s$ indicates the legal moves when removing $s$ tokens:

\begin{center}
    \small
    \begin{tabular}{ll}
        \hline
        $d_s$ & Move \\
        \hline
        0 & no legal move removing $s$ tokens \\
        1 & remove an entire heap of exactly $s$ tokens \\
        2 & remove $s$ tokens, leaving a nonempty remainder \\
        3 & both 1 and 2 are legal \\
        4 & remove $s$ tokens, split the remainder into two nonempty heaps \\
        5 & both 1 and 4 are legal \\
        6 & both 2 and 4 are legal \\
        7 & 1, 2, and 4 are all legal \\
        \hline
    \end{tabular}
    \normalsize
\end{center}

The rules of the game are that the two players alternate taking tokens according to the legal moves encoded; the player who cannot move loses, and the other wins.
We require that the first digit $d_0$ of the code can only be $0$ or $4$ (splitting a heap into two heaps without removing any tokens), since ``removing $0$ tokens'' without splitting is meaningless.

From the rules of octal games, it is easy to see that the order of heaps in a position does not affect the game, i.e., heaps can be regarded as commutative.
Moreover, empty heaps of size $0$ are always unable to move, and adding or removing any number of empty heaps has no effect on the position.

In this paper, we need to introduce a game equivalent to octal games, which we call the \textit{$\e\x$-word representation} of octal games, where positions are represented by strings over the alphabet $\{\e, \x\}$.
Here, $\e$ denotes a token, and $\x$ denotes a separator between heaps; a maximal block of $a$ consecutive $\e$'s surrounded by $\x$'s (of the form $\x\e^a \x$) represents a heap of size $a$.
To avoid ambiguity, we require that the first and last symbols of a position string must be $\x$.
For example:

\begin{center}
    \begin{tabular}{|rcl|rcl|}
        \hline
        $\e\x$-word & & Position & $\e\x$-word & & Position \\
        \hline
        $\x$     & $\leftrightarrow$ & $()$  ~~&~~ $\x\e\e\x$       & $\leftrightarrow$ & $(2)$ \\
        $\x\x$   & $\leftrightarrow$ & $(0)$ ~~&~~ $\x\e\x\e\e\x$   & $\leftrightarrow$ & $(1, 2)$ \\
        $\x\e\x$ & $\leftrightarrow$ & $(1)$ ~~&~~ $\x\e\e\x\x\e\x$ & $\leftrightarrow$ & $(2, 0, 1)$ \\
        \hline
    \end{tabular}
\end{center}

Such a representation uses only 2 symbols to encode all positions of octal games.
At the same time, by expressing token removal and splitting as deleting $\e$'s and inserting $\x$'s, every legal move of an octal game can conveniently be represented as a factor substitution (as defined in~\ref{reg_closure}),
which will be useful for our later proofs:

\begin{center}
    \small
    \begin{tabular}{|rl|rl|}
        \hline
        Move & Factor substitution & Move & Factor substitution \\
        \hline
        $\mathbf{4.}$ & $\e\e\mapsto \e\x\e$       & $\mathbf{0.001}$ & $\x\e\e\e\x\mapsto \x$               \\
        $\mathbf{0.1}$ & $\x\e\x\mapsto \x$        & $\mathbf{0.002}$ & $\x\e\e\e\e\mapsto \x\e$             \\
        $\mathbf{0.2}$ & $\x\e\e\mapsto \x\e$      & $\mathbf{0.004}$ & $\e\e\e\e\e\mapsto \e\x\e$           \\
        $\mathbf{0.4}$ & $\e\e\e\mapsto \e\x\e$    & $\ldots$ & $\ldots$                                     \\
        $\mathbf{0.01}$ & $\x\e\e\x\mapsto \x$     & $\mathbf{0.0}^s\mathbf{1}$ & $\x\e^{s}\x\mapsto \x$     \\
        $\mathbf{0.02}$ & $\x\e\e\e\mapsto \x\e$   & $\mathbf{0.0}^s\mathbf{2}$ & $\x\e^{s}\e\mapsto \x\e$   \\
        $\mathbf{0.04}$ & $\e\e\e\e\mapsto \e\x\e$ & $\mathbf{0.0}^s\mathbf{4}$ & $\e^{s}\e\e\mapsto \e\x\e$ \\
        \hline
    \end{tabular}
    \normalsize
\end{center}

Note that under these rules, the two heaps resulting from a split are always adjacent, which breaks the order-independence of heaps, but does not affect the outcome of the position.

Partizan octal games are obtained from the octal games described above by assigning to the two players different legal moves given by two separate octal codes.
We generally call the player who moves first in a game Left (denoted $\rmL$), and the other player Right (denoted $\rmR$).
In this paper, we will use
\[
    d_{\rmL0}.d_{\rmL1} d_{\rmL2} \ldots : d_{\rmR0}.d_{\rmR1} d_{\rmR2} \ldots
\]
as the code for such a game, with the codes on the left and right of the colon being the move rules for Left and Right, respectively.

Here we give a general method for reducing partizan games to impartial games: define the \textit{label set}
\[
    \varTheta \coloneq \{\rmL, \rmR\},
\]
whose two \textit{labels} denote the move rules of Left and Right, respectively.
For each position
\[
    u = (a_1, a_2, \ldots, a_n)
\]
in the game, we attach a label $\theta \in \varTheta$, obtaining a \textit{labelled position}
\[
    (\theta; u) = (\theta; a_1, a_2, \ldots, a_n),
\]
whose move rule is: move according to the rule corresponding to the label, while simultaneously flipping the label to the other one.
This definition ensures that the resulting game is impartial, and the effect of the partizan game is achieved simply by letting Left always receive positions labelled $\rmL$ and Right always receive positions labelled $\rmR$.
The set of all such labelled positions is denoted by $\varTheta \varH^*$.

Note that the partizan games considered in this paper are still required to be acyclic.
In this setting, every position in an arbitrary partizan game has four possible outcomes. The following table gives the correspondence between these and the outcomes of labelled positions:

\begin{center}
    \begin{tabular}{|l|l|l|}
        \hline
        Unlabelled & Labelled & Meaning \\
        \hline
        $o(u)=\varL$ & $\begin{cases} o(\rmL; u)=\varN; \\ o(\rmR; u)=\varP \end{cases}$ & Left has a winning strategy  \\
        $o(u)=\varR$ & $\begin{cases} o(\rmL; u)=\varP; \\ o(\rmR; u)=\varN \end{cases}$ & Right has a winning strategy \\
        $o(u)=\varN$ & $\begin{cases} o(\rmL; u)=\varN; \\ o(\rmR; u)=\varN \end{cases}$ & the next player has a winning strategy \\
        $o(u)=\varP$ & $\begin{cases} o(\rmL; u)=\varP; \\ o(\rmR; u)=\varP \end{cases}$ & the previous player has a winning strategy \\
        \hline
    \end{tabular}
\end{center}

We also give the $\e\x$-word representation for labelled positions, by placing the label as a symbol at the beginning of the position string:
\begin{center}
    \begin{tabular}{|rcl|rcl|}
        \hline
        $\e\x$-word & & Position & $\e\x$-word & & Position \\
        \hline
        $\rmL\x\e\e\x$ & $\leftrightarrow$ & $(\rmL; 2)$  ~~&~~ $\rmR\x\x\e\x\e\e\x$ & $\leftrightarrow$ & $(\rmR; 1, 2)$ \\
        \hline
    \end{tabular}
\end{center}

\subsection{Misère Quotients and Their Generalization}

The nim-sum decomposition given by the Sprague--Grundy theorem (part (2) of~\ref{thr:sg}) requires games to be played under normal play.
The original motivation for introducing the \textit{misère quotient} was to analyse the outcomes of disjunctive sums of impartial games under misère play,
and it has also been successfully used to solve many (though still only a minority of) misère-play octal games.

We first recall the original definition of the misère quotient (from~\cite{MisQuot2006}):

\begin{definition}
    Let a game $G$ have positions closed under disjunctive sum (i.e., $G+G=G$),
    and let $u_1, u_2 \in \varU$ be two positions of $G$, and $\varV\subseteq \varU$ be a set of positions also closed under disjunctive sum $+$ (sometimes also required to be closed under $\nxt(\cdot)$).
    If for every $v\in \varV$ we have
    \[
        o^-(u_1 + v) = o^-(u_2 + v),
    \]
    then $u_1, u_2$ are said to be equivalent modulo $\varV$, denoted
    \[
        u_1 \equiv_{\varV} u_2.
    \]
    The relation $\equiv_{\varV}$ thus defined is always an equivalence relation,
    and the set of equivalence classes $\varU / \cong_{\varV}$ of all positions under this relation always forms a commutative monoid, called the \textit{misère quotient} of $\varU$ modulo $\varV$.
\end{definition}

Clearly, positions in the same equivalence class must have the same outcome under misère play (taking $v$ to be the empty heap suffices to prove this), so the equivalence classes can also be divided into $\varN$-classes and $\varP$-classes according to their outcomes.
In general, the misère quotient we most wish to obtain is the one with $\varV$ taken to be the whole $\varU$ (in which case $\equiv_{\varV}$ is usually denoted simply by $\equiv$),
because such a misère quotient often provides a comprehensive characterisation of the outcomes of all positions of the game.
Taking misère-play octal games as an example, we can repeatedly decompose a position into the form $u_1 + v$, then find a simpler $u_2$ equivalent to $u_1$ and reduce the position to $u_2 + v$;
by continuing this reduction, the position can eventually be reduced to the simplest representative of its equivalence class, thereby revealing its equivalence class.
If we have characterised the structure of $\varU / \equiv_{\varV}$ and the outcome associated with each equivalence class in sufficient detail, then the outcome of any position can be determined, and the game can be regarded as completely solved.

For unlabelled positions of partizan octal games under normal play, the above definition can also directly yield a misère quotient (simply by replacing $o^-(\cdot)$ with $o(\cdot)$).
However, as Allen pointed out~\cite{PartMis2010}, for most partizan octal games, the structure of such a misère quotient is too unwieldy to yield a solution of the game.
For labelled positions, the addition above is not closed: the rule label of a game must be \textit{global}, but a position obtained by adding two positions has the form
\[
    (\theta_a; a_1, \ldots, a_m) + (\theta_b; b_1, \ldots, b_n),
\]
whose two components each belong to a label; flipping the label only changes that component rather than the global rule, so this is not a legal position.

Therefore, in this paper we define a more general misère quotient: we relax the structure directly from a commutative monoid to a noncommutative monoid acting on a set:

\begin{definition}\label{def:s_serie}
    Let $U$ be a set and $V$ a monoid (with multiplication as the operation). We say that $V$ acts (on the right)
    \footnote{In other literature, monoid actions on sets are usually defined as left actions; in this paper we define them as right actions for convenience. The two differ only in the order of writing.}
    on $U$ if each $v\in V$ corresponds to a self-map $\delta_v: U\rightarrow U$ on $V$, satisfying

    (1)~$\forall v_1, v_2 \in V,\ \forall u\in U,\ \delta_{v_2}(\delta_{v_1}(u))= \delta_{v_1 v_2}(u)$;

    (2)~the identity of $V$ corresponds to the identity map.

    We write $\delta_v (u)$ as $u \cdot v$, and $\delta_{v_2}(\delta_{v_1}(u))$ as $u \cdot v_1 v_2$.
\end{definition}

\begin{definition}\label{def:expanded_misquot}
    Let $u_1, u_2 \in \varU$ be two positions of a game, and let a monoid $V$ act (on the right) on $\varU$.
    If for every $v\in V$ we have
    \[
        o(u_1 \cdot v) = o(u_2 \cdot v),
    \]
    then $u_1, u_2$ are said to be equivalent modulo $V$, denoted
    \[
        u_1 \cong_{V} u_2.
    \]
    It is easy to see that $\cong_{V}$ thus defined is always an equivalence relation,
    and the set of equivalence classes $\varU / \cong_{V}$ of all positions under this relation is the \textit{generalized misère quotient} of $\varU$ modulo $V$.
\end{definition}

Note that at this point $\varU$ has no semigroup operation defined internally, so such a generalized misère quotient is no longer a semigroup.
However, by definition, its elements are closed under the right action of elements of $V$ (the $\cdot$ operation), which can be seen as a weakening of the closure under semigroup multiplication.

For the labelled positions of partizan octal games as defined above, we can naturally define a right action of unlabelled positions on them.
Take $\varU=\varTheta\varH^*$ and $V=\varH^*$ above; for any $(\theta; u_1, \ldots, u_m)\in \varU$ and $(v_1, \ldots, v_n) \in V$,
the right action is naturally defined as concatenation of heaps:
\[
    (\theta; u_1, \ldots, u_m) \cdot (v_1, \ldots, v_n) \coloneq (\theta; u_1, \ldots, u_m, v_1, \ldots, v_n),
\]
which yields a generalized misère quotient for labelled positions of partizan octal games.

\subsection{Algorithmic Principles}

The key to our approach for solving partizan octal games lies in the following assumption:
we assume that the $\e\x$-words of all $\varN$-positions (or all $\varP$-positions) among the labelled positions always form a regular language.
On this basis, we design two algorithms using relevant principles from automata theory,
one for searching for a possible DFA recognising this regular language, and the other for proving that the DFA we find is indeed correct.
We call the DFA recognising all $\varN$-positions the $\varN$-DFA, and the one recognising all $\varP$-positions the $\varP$-DFA; together they are referred to as the \textit{outcome DFA} of the game.

\subsubsection{Searching for the Outcome DFA}

In this section we discuss how to search for such a DFA.

We set $\varSigma =\{\x, \e\}$.
Since we use a labelled character representation, the first symbol of a position is always a label; hence the label set $\varTheta=\{\rmL, \rmR\}$ must also be included in the alphabet of the DFA. Thus the alphabet of the DFA is $\varSigma \cup \varTheta$.

The difficulty, of course, lies in setting up the states and transitions. If we already knew the full structure of some regular language $W$, we could use the following theorem to construct the corresponding DFA:

\begin{lemma}\label{build_fa}
    For an alphabet $\varSigma$ and a regular language $W\subseteq \varSigma^*$, define an equivalence relation for $w_1, w_2\in \varSigma^*$ by
    \[
        w_1 \cong w_2 \iff \forall v\in \varSigma^*,\ (w_1 v\in W \leftrightarrow w_2 v\in W),
    \]
    then a DFA recognising $W$ can be given as follows (where $\overline{w}$ denotes the equivalence class of $w$):
    \begin{align*}
        Q &= W / \cong; \\
        \delta(q, c) &= \{\overline{w_q c} \mid w_q\in q,\ c\in\varSigma\}; \\
        Q_0 &= \overline{\epsilon} \\
        Q_F &= \{ \overline{w} \mid w\in W\}.
    \end{align*}

    Moreover, for any language $W\subseteq \varSigma^*$, the DFA thus constructed is always minimal (has the fewest states).
\end{lemma}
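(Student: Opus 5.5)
This is the Myhill--Nerode construction, and I would prove it in three steps. Throughout, the state set should be read as $\varSigma^*/\cong$ (the equivalence classes of \emph{all} words, not only those in $W$). The transition should be read as the single class $\overline{w_q c}$ for any representative $w_q\in q$.

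\textbf{Step 1: $\cong$ is a right congruence and the automaton is a well-defined DFA.} If $w_1\cong w_2$ and $c\in\varSigma$, then for every $v$ we have $w_1(cv)\in W\leftrightarrow w_2(cv)\in W$, hence $w_1c\cong w_2c$. So $\delta(q,c)$ is a single class independent of the representative. $Q_F$ is a union of whole classes, since taking $v=\epsilon$ shows $w_1\cong w_2$ implies $w_1\in W\leftrightarrow w_2\in W$.

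\textbf{Step 2: $Q$ is finite.} By Theorem~\ref{reg_determ}, fix a DFA $\varA'$ with initial state $q_0'$ recognising $W$. If $q_0'\cdot w_1=q_0'\cdot w_2$, then for all $v$ the words $w_1v$ and $w_2v$ end in the same state, so $w_1\cong w_2$. Hence $|\varSigma^*/\cong|$ is at most the number of states of $\varA'$ (with the sink state counted), and $Q$ is finite.

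\textbf{Step 3: the DFA recognises $W$.} By induction on $|w|$, using Step 1, $\overline{\epsilon}\cdot w=\overline{w}$. So $w$ is accepted iff $\overline{w}\in Q_F$ iff $w\in W$.

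\textbf{Minimality.} This is the only step needing care, and it is where I would be most explicit about the relaxed DFA convention: count the sink state of any competitor DFA, and likewise the class of dead words in ours. Let $\varB$ be any DFA recognising $W$, and consider the map $\overline{w}\mapsto q_0^{\varB}\cdot w$. This map need not be well defined, so instead I would argue injectivity in the reverse direction. If $\overline{w_1}\neq\overline{w_2}$, there is a $v$ separating them, namely with exactly one of $w_1v$, $w_2v$ in $W$. Then $q_0^{\varB}\cdot w_1\neq q_0^{\varB}\cdot w_2$, since otherwise $w_1v$ and $w_2v$ would end in the same state. Choosing one representative per class thus gives an injection of $Q$ into the states of $\varB$. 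Hence $\varB$ has at least $|Q|$ states.

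For an arbitrary, possibly non-regular, language $W$, the same injection argument shows that any DFA recognising $W$ has at least as many states as there are classes. If there are infinitely many classes, no DFA exists, and the statement holds vacuously.
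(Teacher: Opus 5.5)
Your proof is correct and is the standard Myhill--Nerode argument; the paper gives no proof of this lemma and simply defers to textbook automata theory, so you are following the same route it relies on. You were right to repair the statement as written in two places: reading the state set as $\varSigma^*/\cong$ instead of the literal $W/\cong$ (which need not contain $\overline{\epsilon}$), and counting sink states consistently on both sides in the minimality argument.
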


However, we do not have direct access to the outcomes of all positions; we can only compute the outcomes of small positions.
Therefore, we need to take $V\subseteq\varSigma^*$ and weaken the above $\cong$ to $\cong_{V}$ defined as follows:
\[
    u_1 \cong_V u_2 \iff \forall v\in V,\ (u_1 v\in W \leftrightarrow u_2 v\in W),
\]
and still construct the corresponding DFA in the same way. Clearly, the larger $V$ is, the more likely the resulting DFA is the correct one.

In the concrete algorithm, we take one representative word from each equivalence class of $Q$ (so that such words are in bijection with states), and determine whether another word belongs to that equivalence class by testing its equivalence with the representative.
Since we have stipulated that only positions of the form $\varTheta\x\varSigma^*\x$ are legal, we can directly discard prefixes that cannot form legal positions.
The three prefixes $\epsilon, \rmL, \rmR$ are special (since they do not themselves form positions), and we treat them as three separate equivalence classes;
we only need to partition the remaining words beginning with $\rmL\x$ or $\rmR\x$ into equivalence classes.

As for the choice of $V$, first we may restrict it to words ending in $\x$ to ensure legality of positions. Although this means $V$ no longer covers the whole $\varSigma^*$, the practical effect is simply to discard certain illegal positions whose outcomes are meaningless anyway.
On this basis, since positions differing by a permutation or by any number of empty heaps are naturally equivalent, we can omit such positions to save computation.
Specifically, we choose words of the following form from $\varSigma^*$ as elements of $V$:
\begin{align*}
    v &= \e^a \x \e^{b_1}\x \e^{b_2}\x \ldots \x \e^{b_n}\x, \\
      &~~~~ b_1\geq b_2\geq\ldots\geq b_n >0, |v|\leq n_v,
\end{align*}
where $n_v$ is the maximum length we select for $V$. A larger $n_v$ makes it more likely to obtain the correct DFA, but the computational cost grows exponentially with it.

At the start of the procedure, we initialise the state table as
\[
    \{\epsilon, \rmL, \rmR, \rmL\x, \rmR\x\},
\]
where the first three are special states. Moreover, if the positions corresponding to the last two states (both of which are clearly terminal) satisfy the prescribed outcome, they are marked as final states.
Then we set up a queue of words to be reduced:
\[
    \{\rmL\x\x,\ \rmL\x\e,\ \rmR\x\x,\ \rmR\x\e\}.
\]
As the procedure runs, it repeatedly pops the first word $q=\theta\x wc$ (where $c$ is the last character of the word) from the queue and tests whether it is equivalent to some existing non-special representative $q'$:

(1)~If such an existing state exists, the reduction succeeds; the word does not correspond to a new state, and we merely add a transition given by this equivalence:
\[
    \theta\x w \cdot c = q'.
\]
No further reduction is needed for words having this state as a prefix.

(2)~If the reduction fails, the word corresponds to a new state and is added to the state table. If this state also ends with $\x$ and the corresponding position satisfies the prescribed outcome, it is marked as a final state.
In this case, words having $q$ as a prefix still need to be reduced, so we append $q\x$ and $q\e$ to the end of the queue, and add the \textit{trivial transitions}
\[
    \begin{cases}
        q\cdot \x = q\x; \\
        q\cdot \e = q\e.
    \end{cases}
\]

When the queue of words to be reduced becomes empty, all words have been assigned to equivalence classes represented by the words currently in the state table, and the procedure terminates.
If the number of states becomes too large and the procedure has still not terminated, it is treated as an overflow.

From the above process, it is not hard to see that the state tables and transition mappings of the $\varN$-DFA and the $\varP$-DFA are identical, differing only in their final states: the two sets of final states are disjoint and together contain all states ending with $\x$.
Accordingly, once we have the $\varN$-DFA, we can immediately obtain the corresponding $\varP$-DFA.

\subsubsection{Verification of the Outcome DFA}

For both normal and misère play, we only need to check whether the candidate outcome DFA is correct according to Definition~\ref{def:outcome} and Definition~\ref{def:outcome_mis}, respectively.
For a set of positions $\varW \subseteq \varU$, we define
\begin{align*}
    \nxt(\varW) &\coloneq \{w' \mid w\in\varW,\ w' \in \nxt(w)\}; \\
    \prv(\varW) &\coloneq \{w \mid \exists w'\in\nxt(w),\ w' \in \varW \},
\end{align*}

Then we can rewrite the definitions of $\varP$- and $\varN$-positions in set-theoretic form, yielding:

\begin{corollary}\label{outcome_cond}
    For any sets of positions $\varW_{\varP}, \varW_{\varN} \subseteq \varU$ of an impartial game under normal play,
    they are exactly the sets of all $\varP$- and $\varN$-positions of the game if and only if all of the following conditions hold:

    (1)~$\varT \subseteq \varW_{\varP}$.

    (2)~$\nxt(\varW_{\varP}) \subseteq \varW_{\varN}$.

    (3)~$\varW_{\varN} \subseteq \prv(\varW_{\varP})$.
\end{corollary}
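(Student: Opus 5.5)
The plan is to prove both directions by well-founded induction along the option relation, which is legitimate because all games considered are acyclic. I will read Definition~\ref{def:outcome} in its intended sense: $u$ is a $\varP$-position iff no option is a $\varP$-position, and an $\varN$-position iff some option is a $\varP$-position. I will also make explicit an assumption that the statement leaves implicit, namely that $\varW_{\varP}$ and $\varW_{\varN}$ partition $\varU$, that is, $\varW_{\varP}\cup\varW_{\varN}=\varU$ and $\varW_{\varP}\cap\varW_{\varN}=\varnothing$. This is exactly the situation produced by the search procedure, where the final states of the $\varP$-DFA and $\varN$-DFA are complementary among states ending in $\x$. Without this assumption the ``if'' direction fails: for instance $\varW_{\varP}=\varT$ and $\varW_{\varN}=\varnothing$ satisfy (1)--(3).

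\emph{Necessity.} Suppose $\varW_{\varP}=\varP$ and $\varW_{\varN}=\varN$. A terminal position has no options, so it has no $\varP$-option and is a $\varP$-position; this gives (1). If $w\in\varP$, then no option of $w$ is a $\varP$-position, so every option lies in $\varN$; this gives (2). If $w\in\varN$, then some option of $w$ lies in $\varP$, so $w\in\prv(\varP)$; this gives (3).

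\emph{Sufficiency.} Assume (1)--(3) and the partition property. I will show, for every $u\in\varU$, that $u\in\varW_{\varP}\iff o(u)=\varP$, by induction over the well-founded relation ``$u'$ is an option of $u$''. This relation is well-founded by acyclicity, so the induction needs no bound on the number of options.

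Fix $u$ and assume the claim for all of its options. There are three cases.
\begin{itemize}
\item If $u\in\varT$, then $u\in\varW_{\varP}$ by (1), and $o(u)=\varP$.
\item If $u\in\varW_{\varP}-\varT$, then by (2) every option of $u$ lies in $\varW_{\varN}$. By disjointness no option lies in $\varW_{\varP}$, so by the induction hypothesis no option is a $\varP$-position. Hence $o(u)=\varP$.
\item If $u\in\varW_{\varN}$, then by (3) some option lies in $\varW_{\varP}$. By the induction hypothesis that option is a $\varP$-position, so $o(u)=\varN$.
\end{itemize}
Since $\varW_{\varP}$ and $\varW_{\varN}$ cover $\varU$, these cases are exhaustive. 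Together they show that membership in $\varW_{\varP}$ coincides with $o(u)=\varP$, and membership in $\varW_{\varN}$ with $o(u)=\varN$.

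The step needing the most care is not a computation but the bookkeeping of hypotheses. One must notice that disjointness is genuinely used in the second case: (2) only places options in $\varW_{\varN}$, and we need to know they are outside $\varW_{\varP}$. Likewise, the covering property is what makes the case split exhaustive. In the application, both properties are guaranteed by constructing the two DFAs with complementary final states on legal position words.
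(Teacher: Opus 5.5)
Your proof is correct and is the same route the paper intends. The paper gives no argument beyond calling the corollary a set-theoretic rewriting of Definition~\ref{def:outcome}, whose first clause should indeed read $o(u')=\varP$ as you assumed, and your well-founded induction over the acyclic option relation is exactly what makes that rewriting rigorous.

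You are also right that the ``if'' direction is false as printed. Your example $\varW_{\varP}=\varT$, $\varW_{\varN}=\varnothing$ is a valid counterexample, and the statement needs the covering condition $\varW_{\varP}\cup\varW_{\varN}=\varU$, which the paper tacitly relies on through the complementary final states of its two DFAs. Disjointness, however, need not be assumed: a minimal element of $\varW_{\varP}\cap\varW_{\varN}$ would, by (3) and then (2), have an option lying in that same intersection, contradicting minimality.
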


To verify whether the found DFA is correct, it suffices to use automaton operations to check whether the above conditions hold.

Under our $\e\x$-word representation, both $\nxt(\cdot)$ and $\prv(\cdot)$ can be expressed as factor substitutions on languages,
from which one can construct FAs recognising $\nxt(\varW_{\varP})$ and $\prv(\varW_{\varP})$.
Furthermore, the set of terminal positions $\varT$ is given by all positions that contain no movable factor (i.e., no factor that can be substituted). Denoting the set of all such factors by $\varV_{\mathrm{m}}$, we have
\[
    \varT = \varU - (\varSigma^* \varV_{\mathrm{m}} \varSigma^*),
\]
where $\varV_{\mathrm{m}}$ contains only finitely many words; hence by Theorems~\ref{reg_finite} and~\ref{reg_closure}, this is clearly a regular language.
In summary, it is feasible to use automaton operations to check whether the conditions in Corollary~\ref{outcome_cond} hold.

For misère play, we simply replace (1) and (3) in Corollary~\ref{outcome_cond} with, respectively:

($1^-$)~$\varT \subseteq \varW_{\varN}$.

($3^-$)~$(\varW_{\varN} - \varT) \subseteq \prv(\varW_{\varP})$.

That is, terminal positions are $\varN$-positions, and they need not have a $\varP$-position as a successor.

\subsection{Computational Results}

\subsubsection{$\mathbf{0.04\col0.03}$ as an Example}

We first take $\mathbf{0.04\col0.03}$ as an example to illustrate what information can be obtained from the outcome DFA produced by our algorithm.
In this game, Left's legal move is to remove two tokens from a heap and split the remainder into two nonempty heaps; Right's legal move is to remove two tokens (possibly leaving nothing) without splitting.
Traditional methods are powerless against this game, but our Python implementation of the algorithm successfully searches for and verifies the $\varN$-DFA of this game within 2.4 seconds, yielding
\[
    A_{\varN}=\langle Q, \varSigma \cup \varTheta, \delta, \epsilon, Q_{\varN} \rangle,
\]
where:

(1)~The state set $Q$ contains 25 states (excluding the sink state). Grouped by their first and last symbols, they are:
\small
\begin{align*}
    \text{Special:~~}  & \epsilon, \rmL, \rmR; \\
    \text{$\rmL\x$-class:~~} & \rmL\x, \rmL\x\e^2\x, \rmL\x\e^4\x; \\
    \text{$\rmL\e$-class:~~} & \rmL\x\e, \rmL\x\e^2, \rmL\x\e^3, \rmL\x\e^2\x\e, \rmL\x\e^4, \rmL\x\e^5, \\
                   & \rmL\x\e^4\x\e, \rmL\x\e^4\x\e^2, \rmL\x\e^4\x\e^3, \rmL\x\e^4\x\e^4, \rmL\x\e^4\x\e^5; \\
    \text{$\rmR\x$-class:~~} & \rmR\x, \rmR\x\e^2\x; \\
    \text{$\rmR\e$-class:~~} & \rmR\x\e, \rmR\x\e^2, \rmR\x\e^3, \rmR\x\e^2\x\e, \rmR\x\e^4, \rmR\x\e^5.
\end{align*}
\normalsize
Each state in the Lx-class and Rx-class corresponds to an irreducible labelled position; all other positions are eventually reduced via transitions to one of these irreducible positions.
Therefore, each state in the Lx-class and Rx-class corresponds to an equivalence class in the generalized misère quotient, and the word of the state itself is a shortest representative of that class.

Thus, the generalized misère quotient of $\mathbf{0.04\col0.03}$ contains 5 classes. Rewriting them back to the position format gives:
\[
    (\varU/\cong) = \{ \overline{(\rmL;)}, \overline{(\rmL; 2)} , \overline{(\rmL; 4)}, \overline{(\rmR;)}, \overline{(\rmR; 2)} \}.
\]
The DFA $A_{\varN}$ has 2 final states:
\[
    Q_{\varN} = \{\rmL\x\e^4\x,\ \rmR\x\e^2\x\}.
\]
So among the above 5 classes, 2 are $\varN$-classes:
\[
    (\varN/\cong) = \{ \overline{(\rmL; 4)}, \overline{(\rmR; 2)} \}.
\]
The remaining 3 are $\varP$-classes.

(2)~The transition mapping $\delta$ contains 48 transitions in total, of which 30 are trivial transitions of the form
\[
    p\cdot \x = p \text{~~or~~} p\cdot \e = p\e.
\]
The remaining 18 non-trivial transitions (i.e., reduction transitions) are also grouped by class as follows:
\small
\begin{align*}
    \text{$\rmL\x$-class:~~} & \rmL\x\e\cdot\x=\rmL\x,~~ \rmL\x\e^3\cdot\x=\rmL\x\e^2\x,~~ \rmL\x\e^5\cdot\x=\rmL\x\e^2\x,~~ \\
         & \rmL\x\e^4\x\e\cdot\x=\rmL\x\e^4\x,~~ \rmL\x\e^4\x\e^2\cdot\x=\rmL\x\e^2\x,~~ \rmL\x\e^4\x\e^3\cdot\x=\rmL\x\e^2\x, \\
         & \rmL\x\e^4\x\e^4\cdot\x=\rmL\x\e^2\x,~~ \rmL\x\e^4\x\e^5\cdot\x=\rmL\x\e^2\x,~~ \rmL\x\e^2\x\e\cdot\x=\rmL\x\e^2\x; \\
    \text{$\rmL\e$-class:~~} & \rmL\x\e^5\cdot\e=\rmL\x\e,~~ \rmL\x\e^4\x\e^5\cdot\e=\rmL\x\e^4\x\e,~~ \rmL\x\e^2\x\e\cdot\e=\rmL\x\e^2\x\e; \\
    \text{$\rmR\x$-class:~~} & \rmR\x\e\cdot\x=\rmR\x,~~ \rmR\x\e^4\cdot\x=\rmR\x\e^2\x,~~ \rmR\x\e^5\cdot\x=\rmR\x\e^2\x, \\
         & \rmR\x\e^2\x\e\cdot\x=\rmR\x\e^2\x; \\
    \text{$\rmR\e$-class:~~} & \rmR\x\e^5\cdot\e=\rmR\x\e,~~ \rmR\x\e^2\x\e\cdot\e=\rmR\x\e^2\x\e.
\end{align*}\normalsize
These reduction transitions completely characterise the equivalence relation of the generalized misère quotient.
It is worth noting that our program does not require each word to reduce to a word with the same label; however, from the actual results, only a very few partizan octal games (e.g., $\mathbf{4.7 \mid 0.75}$) have positions with different labels that are equivalent to each other.

We can also rewrite the $\e\x$-words in the above transitions back to the position format.
In doing so, each $\theta\x$-class gives a pair of equivalent positions, while each $\theta\e$-class gives a family of one-to-one equivalences:
\small
\begin{align*}
    \text{$\rmL\x$-class:~~} & (\rmL; 1)\cong(\rmL;),~~  (\rmL; 3)\cong (\rmL; 2),~~ (\rmL; 5)\cong (\rmL; 2),~~ \\
         & (\rmL; 4, 1)\cong (\rmL; 4),~~ (\rmL; 4, 2)\cong (\rmL; 2),~~ (\rmL; 4, 3)\cong (\rmL; 2), \\
         & (\rmL; 4, 4)\cong (\rmL; 2),~~ (\rmL; 4, 5)\cong (\rmL; 2),~~ (\rmL; 2, 1)\cong (\rmL; 2); \\
    \text{$\rmL\e$-class:~~} & (\rmL; 5, 1\!+\!i)\cong (\rmL; 1\!+\!i),~~ (\rmL; 4, 6\!+\!i)\cong(\rmL; 5\!+\!i),~~ (\rmL; 2, 2\!+\!i)\cong (\rmL; 2,1\!+\!i); \\
    \text{$\rmR\x$-class:~~} & (\rmR; 1)\cong(\rmR),~~ (\rmR; 4)\cong (\rmR; 2),~~ (\rmR; 5)\cong (\rmR; 2), \\
         & (\rmR;2,1,v) \cong (\rmR; 2,v); \\
    \text{$\rmR\e$-class:~~} & (\rmR;6\!+\!i)\cong(\rmR;1\!+\!i),~~ (\rmR;2,2\!+\!i)\cong(\rmR;2,1\!+\!i).
\end{align*}\normalsize
where $i\in\mathbb{N}$ is an arbitrary natural number.

\begin{proposition}\label{finite_misquot}
    If a partizan octal game (or any similar partizan game with multi-heap positions) admits an outcome DFA, then its generalized misère quotient is finite.
\end{proposition}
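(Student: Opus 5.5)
The plan is to map each labelled position to the state its $\e\x$-word reaches in the outcome DFA, and to show this map is constant on classes of $\cong_{V}$. The quotient then has at most as many classes as the DFA has states, which is finite.

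Let $A=\langle \varSigma\cup\varTheta, Q, \delta, \epsilon, Q_F\rangle$ be the outcome DFA, taken without loss of generality to be the $\varN$-DFA, with $Q_F$ its set of final states. As noted above, the $\varP$-DFA has the same states and transitions and differs only in its final states. For a labelled position $u=(\theta;a_1,\ldots,a_m)$, write $w(u)=\theta\x\e^{a_1}\x\cdots\x\e^{a_m}\x$ for its $\e\x$-word. For an unlabelled position $v=(b_1,\ldots,b_n)\in\varH^*$, write $w'(v)=\e^{b_1}\x\cdots\e^{b_n}\x$, so that $w'(v)$ is the word obtained from $\x w'(v)$ by removing the leading $\x$.

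The first step is to check that the right action is compatible with concatenation of words, namely
\[
w(u\cdot v)=w(u)\,w'(v).
\]
This follows directly from how heaps are concatenated and how separators are placed in the encoding.

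Since a DFA acts on the right of $\varSigma^*$, the state reached satisfies $q_0\cdot w(u\cdot v)=(q_0\cdot w(u))\cdot w'(v)$. The outcome of a position is decided by its word: $o(u\cdot v)=\varN$ if and only if $(q_0\cdot w(u))\cdot w'(v)\in Q_F$. It is also decided by the $\varP$-DFA, but that changes nothing because the two automata share their transitions. Consequently, if $q_0\cdot w(u_1)=q_0\cdot w(u_2)$, then $o(u_1\cdot v)=o(u_2\cdot v)$ for every $v\in\varH^*$, that is, $u_1\cong_{V} u_2$ with $V=\varH^*$.

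One detail needs care. The paper's DFA uses the relaxed definition in which a transition may be undefined, so the relevant words must not fall into the sink state. Every word $w(u)w'(v)$ encodes a legal position, and legal positions are all accepted by exactly one of the $\varN$-DFA and the $\varP$-DFA, which share their transitions. Hence no such word reaches the sink. Even if one did, the argument survives by treating $q_\varnothing$ as an additional state.

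It follows that the map $u\mapsto q_0\cdot w(u)$ induces a surjection from $|Q|$, or $|Q|+1$ if the sink is counted, onto the quotient. Equivalently, the number of classes satisfies $|\varU/\cong_V|\le|Q|+1<\infty$. In fact only states reached by words ending in $\x$ are needed, which matches the $\theta\x$-classes in the example for $\mathbf{0.04\col0.03}$.

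For ``similar games with multi-heap positions'' the same argument applies, provided the action of $V$ corresponds to appending a suffix word. I expect the only point needing care to be exactly this: checking that the monoid action is realised as right concatenation of words, since that is what makes the DFA's right action transport equivalence. If the action were instead inserting heaps elsewhere in the word, one would first use the fact that heaps commute to move the inserted heaps to the end, which reduces the action to right concatenation.
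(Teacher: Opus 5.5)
Your proof is correct, and it uses the same mechanism as the paper: the DFA state reached by a position's word determines its class. The two arguments are organized differently, though.

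The paper's one-line proof relies on the structure of the automaton produced by its search algorithm, as displayed for $\mathbf{0.04\col0.03}$. There each $\theta\x$-state is itself a position and a shortest representative of its class. A new state is created only when test suffixes separate the new word from every existing representative, and after an $\x$ those suffixes are exactly heap-appendings. Hence final states and classes are in bijection.

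You prove only the direction that is actually needed. From $w(u\cdot v)=w(u)\,w'(v)$ and the right action of the DFA, equality of reached states forces $u_1\cong_V u_2$. So the reached states (including the sink if necessary) surject onto the quotient, and $|\varU/\cong_V|\le|Q|+1$.

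Your version holds for an arbitrary outcome DFA, which is all the proposition assumes. The paper's bijection would fail for a non-minimal automaton, since several final states, or unreachable ones, can break the correspondence. Its proof therefore tacitly uses the algorithm's reduced DFA. In exchange, the paper's route gives the exact class counts $|\overline{\varU}|$ reported in its tables.

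In the paper's setting, the $\varN$- and $\varP$-DFAs share transitions and every position word is accepted by exactly one of them. Your sink remark then shows that the surjection in fact starts from $Q_{\varN}\cup Q_{\varP}$. This is precisely the paper's correspondence without injectivity.

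One wording slip: your opening sentence says you will show the state map is constant on $\cong_V$-classes. That is the converse statement, and it is false for non-minimal DFAs. What you actually prove, and what you need, is that $\cong_V$ is constant on the fibres of the state map.
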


\begin{proof}
    From the correspondence shown above, the final states of the $\varP$- and $\varN$-DFAs are in bijection with the $\varP$- and $\varN$-classes in the generalized misère quotient; hence finiteness of the former implies finiteness of the latter.
\end{proof}

The converse of this proposition need not hold, since one can imagine a game whose outcome cannot be determined by a DFA, but can be determined by an automaton resembling a DFA with infinitely many states yet only finitely many final states.

Moreover:

\begin{proposition}\label{normal_per}
    If a partizan octal game (or any similar partizan game with multi-heap positions) admits an outcome DFA, then its outcomes are \textit{ultimately periodic}, i.e.:
    there exist $s, t\in\mathbb{N}^+$, $s\geq t$,
    such that for every $(\theta; a_1, a_2,\ldots, a_n)\in\varTheta\varH^*$ with $a_1 > s$, we have
    \[
        o(\theta; a_1, a_2, \ldots, a_n)=o(\theta; a_1-t, a_2, \ldots, a_n).
    \]
    The same holds under misère play.
\end{proposition}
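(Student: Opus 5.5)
We pump the first heap through the finite state set of the outcome DFA. Let $A=\langle Q,\varSigma\cup\varTheta,\delta,\epsilon,Q_F\rangle$ be the $\varN$-DFA (the $\varP$-DFA has the same states and transitions, so one automaton suffices), with $|Q|=k$. The position $(\theta;a_1,a_2,\ldots,a_n)$ has $\e\x$-word $\theta\x\e^{a_1}\x w$, where $w=\e^{a_2}\x\cdots\e^{a_n}\x$. The only part depending on $a_1$ is the prefix $\theta\x\e^{a_1}$. We therefore look at the state sequence
\[
    q_\theta(a)\coloneq \epsilon\cdot\theta\x\e^{a},\qquad a\in\mathbb{N},
\]
for each $\theta\in\{\rmL,\rmR\}$. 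Here we use the sink state $q_\varnothing$ explicitly, so that $\delta$ is total and the sequence is always defined.

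First, each sequence $q_\theta(0),q_\theta(1),\ldots$ satisfies $q_\theta(a+1)=\delta(q_\theta(a),\e)$. It is therefore the orbit of a single point under the self-map $\delta(\cdot,\e)$ of the finite set $Q\cup\{q_\varnothing\}$. By pigeonhole there are $i<j\le k+1$ with $q_\theta(i)=q_\theta(j)$. Hence $q_\theta(a)=q_\theta(a-(j-i))$ for all $a\ge j$, so the sequence is ultimately periodic with preperiod $s_\theta$ and period $t_\theta\ge1$.

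Next, we make the constants uniform over the two labels. Take $t=t_\rmL t_\rmR$ (or their lcm), which is again a period of both sequences. Take $s=\max(s_\rmL,s_\rmR,t)+t$, so that $s\ge t$ and $a_1>s$ implies $a_1-t>\max(s_\rmL,s_\rmR)$. Then for $a_1>s$ we have $q_\theta(a_1)=q_\theta(a_1-t)$.

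Finally, since $A$ is deterministic,
\[
    \epsilon\cdot\theta\x\e^{a_1}\x w = q_\theta(a_1)\cdot \x w = q_\theta(a_1-t)\cdot\x w = \epsilon\cdot\theta\x\e^{a_1-t}\x w.
\]
So the two words are both accepted or both rejected, i.e. $o(\theta;a_1,\ldots,a_n)=o(\theta;a_1-t,a_2,\ldots,a_n)$. This holds for every tail $w$, including $n=1$ where $w=\epsilon$. The argument uses only that the outcome is the acceptance predicate of a DFA on these words; it never uses the game rules. Hence the misère case follows verbatim from the misère outcome DFA, and the same argument applies to any multi-heap game encoded this way.

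The only delicate point is bookkeeping. The sink state must be included, otherwise the relaxed partial DFA might make the sequence undefined. One also has to check that the tail $\x w$ read after the first heap does not depend on $a_1$, and it clearly does not. No step is a genuine obstacle: once the DFA is viewed as a right action of $\varSigma^*$ on a finite set, the result is the standard fact that powers of one element of a finite transformation monoid are eventually periodic.
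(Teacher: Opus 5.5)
Your proof is correct and is essentially the paper's argument: the states reached by reading successive powers of $\e$ form an ultimately periodic orbit of $\delta(\cdot,\e)$ on a finite set, a common period is taken as an lcm, and determinism carries the equality of states through the rest of the word. The differences are minor. The paper pumps from every state $q\in Q$ at once, which makes $\e^a$ and $\e^{a-t}$ interchangeable anywhere in a word, whereas you need only the two states $\epsilon\cdot\theta\x$. Your choice $s=\max(s_\rmL,s_\rmR,t)+t$ also correctly keeps $a_1-t$ beyond the preperiod. The paper's $s=\max(t,\max_q s_q)$ does not guarantee this in general and should be enlarged to, e.g., $\max_q s_q+t$.
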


\begin{proof}
    Since the outcome DFA has finitely many states, for any state $q\in Q$, the sequence of states
    \[
        q\cdot \epsilon,\ q\cdot \e,\ q\cdot \e\e,\ q\cdot\e\e\e,\ \ldots,\ q\cdot\e^n ,\ldots
    \]
    must be ultimately periodic, i.e., there exist $s_q, t_q\in\mathbb{N}^+$, $s_q\geq t_q$, such that
    \[
        q\cdot \e^{s_q+1} = q\cdot \e^{s_q+1-t_q},
    \]
    and hence for every positive integer $a>s_q$ we have
    \[
        q\cdot \e^a = q\cdot \e^{a-t_q}.
    \]

    It suffices to take
    \begin{align*}
        t &= \operatorname{lcm} \{t_q \mid q\in Q\};\\
        s &= \max(t_{\theta},\ \max \{s_q \mid q\in Q\})
    \end{align*}
    to ensure that for every $a>s$,
    \[
        q\cdot \e^a = q\cdot \e^{a-t}
    \]
    holds for every state $q\in Q$, so that $\e^a$ and $\e^{a-t}$ are completely equivalent in the outcome DFA, yielding periodic outcomes.
\end{proof}

\subsubsection{Other Results}

Research on impartial octal games generally focuses on \textit{three-digit octal games} of the form $\mathbf{0.xxx}$ or $\mathbf{4.xx}$.
However, when partizan games are included, the number of such games (excluding those where one player has no legal moves) amounts to 165,600 up to symmetry, making the computational cost prohibitive.
Therefore, we attempt only to solve all \textit{two-digit} games, i.e., those whose Left and Right codes are both of the form $\mathbf{0.xx}$ or $\mathbf{4.x}$.
There are 2,556 such impartial and partizan octal games up to symmetry.
To eliminate symmetry, we consider only those where Left's octal code is lexicographically no smaller than Right's.

We set the maximum number of states to 500 and the maximum length of test suffixes to 30, and applied our algorithm to each of these 2,556 games one by one.
The results are shown in Figure~\ref{fig:PartOctal}.
Among them, 258 games failed to yield an outcome DFA due to state count or memory overflow, 78 games produced DFAs that failed verification, and the remaining 2,220 games were successfully solved (86.85\%).

\begin{figure}
    \centering
    \includegraphics[width=0.8\linewidth]{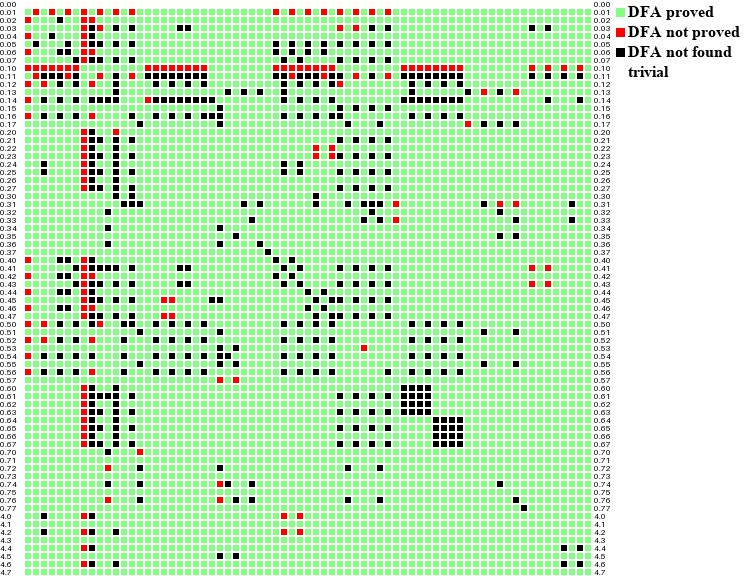}
    \caption{Results for two-digit partizan octal games}
    \label{fig:PartOctal}
\end{figure}

It is worth noting that some impartial games among them (lying on the diagonal of the figure) actually have complete solutions via the Sprague--Grundy theorem, yet our program failed to solve them.
This suggests that some of the unsolved partizan octal games may also be solvable with sufficient computational power.

We group games with equivalent outcome DFAs into \textit{outcome equivalence classes}, obtaining 456 classes in total.

We adopt the shorthand notation for octal game codes from~\cite{MisQuot2006}:
\begin{align*}
    \dot{d}  &\coloneq d, d+1; \\
    \ddot{d} &\coloneq d, d+2; \\
    \bar{d}  &\coloneq d, d+1, d+2, d+3.
\end{align*}
The following table lists the misère quotient statistics for the top 10 outcome equivalence classes with the largest state count $|Q|$:

\begin{center}
    \begin{tabular}{|c|cccc|c|c|}
        \hline
        Code & $|\overline{\varP_\rmL}|$ & $|\overline{\varN_\rmL}|$ & $|\overline{\varP_\rmR}|$ & $|\overline{\varN_\rmR}|$ & $|\overline{\varU}|$ & $|Q|$ \\
        \hline
        $\mathbf{0.74\col0.31}$                             & 52 & 21 & 21 & 52 & 146 & 443 \\
        $\mathbf{0.76\col0.31}$                             & 52 & 22 & 21 & 51 & 146 & 439 \\
        $\mathbf{0.7\col0.17}$                              & 17 & 21 & 21 & 17 & 76  & 375 \\
        $\mathbf{4.7\col0.73}$                              & 4  & 15 & 12 & 12 & 43  & 316 \\
        $\mathbf{0.44\col0.12}$                             & 4  & 7  & 3  & 3  & 17  & 310 \\
        $\mathbf{0.72\col0.13},\ \mathbf{0.76\col0.13}$      & 29 & 22 & 22 & 29 & 102 & 309 \\
        $\mathbf{4.\ddot{0}\col0.4\ddot{1}}$                & 14 & 15 & 15 & 14 & 58  & 285 \\
        $\mathbf{0.6\ddot{3}\col0.1},\ \mathbf{0.67\col0.1}$ & 29 & 15 & 15 & 29 & 88  & 261 \\
        $\mathbf{0.4\ddot{4}\col0.4\ddot{0}}$               & 1  & 4  & 9  & 10 & 24  & 251 \\
        $\mathbf{0.74\col0.54}$                             & 1  & 4  & 8  & 10 & 23  & 232 \\
        \hline
    \end{tabular}
\end{center}

The following table lists the 10 outcome equivalence classes with the smallest state count
\footnote{The ellipses $\ldots$ indicate partizan games included in the class; the number following is the total count of games in that outcome equivalence class.}:
\begin{center}
    \begin{tabular}{|c|cccc|c|c|}
        \hline
        Code & $|\overline{\varP_\rmL}|$ & $|\overline{\varN_\rmL}|$ & $|\overline{\varP_\rmR}|$ & $|\overline{\varN_\rmR}|$ & $|\overline{\varU}|$ & $|Q|$ \\
        \hline
        $\mathbf{0.13\col0.02},\ \ldots$ (296 games)      & 1  & 1  & 1  & 0  & 3   & 8 \\
        $\mathbf{0.2\col0.13},\ \ldots$ (198 games)       & 1  & 0  & 1  & 1  & 3   & 8 \\
        $\mathbf{0.02\col0.02}$                     & 2  & 1  & 2  & 1  & 3   & 10 \\
        $\mathbf{0.03\col0.03}$                     & 2  & 1  & 2  & 1  & 3   & 10 \\
        $\mathbf{0.12\col0.12}$                     & 2  & 1  & 2  & 1  & 3   & 10 \\
        $\mathbf{0.13\col0.13}$                     & 2  & 1  & 2  & 1  & 3   & 10 \\
        $\mathbf{0.1\col0.1}$                       & 2  & 1  & 2  & 1  & 3   & 10 \\
        $\mathbf{0.01\col0.01}$                     & 2  & 1  & 2  & 1  & 3   & 12 \\
        $\mathbf{0.11\col0.11}$                     & 2  & 1  & 2  & 1  & 3   & 12 \\
        $\mathbf{0.32\col0.01},\ \ldots$ (224 games)      & 1  & 1  & 2  & 1  & 5   & 12 \\
        $\mathbf{0.41\col0.32},\ \ldots$ (32 games)       & 2  & 1  & 1  & 1  & 5   & 12 \\
        $\mathbf{0.75\col0.75},\ \mathbf{4.7\col0.75},\ \mathbf{4.7\col4.7}$ & 2  & 3  & 2  & 3  & 5   & 12 \\
        \hline
    \end{tabular}
\end{center}

From these results, we observe that most partizan octal games have finite generalized misère quotients,
and in comparison, they seem to tend to have smaller (generalized) misère quotients than their impartial counterparts.
One possible explanation is that asymmetric rules are likely to give one player an advantage, making positions easier to reduce.
Based on this, we propose the following ``partizan version of Guy's conjecture'':

\begin{problem}
    Do all impartial or partizan octal games admit an outcome DFA?
\end{problem}

It is worth noting that the impartial case of this problem does not imply the original Guy's conjecture, since it only yields periodicity of outcomes rather than periodicity of SG values.

We also attempted to solve the corresponding games under misère play. As in the impartial case, the behaviour of partizan octal games under misère play is significantly worse than under normal play.
The results are shown in Figure~\ref{fig:MisPartOctal}.
Among 2,485 games, 607 failed to yield an outcome DFA, 182 produced DFAs that failed automatic verification, and only 1,696 games were successfully solved (68.25\%).

\begin{figure}
    \centering
    \includegraphics[width=0.8\linewidth]{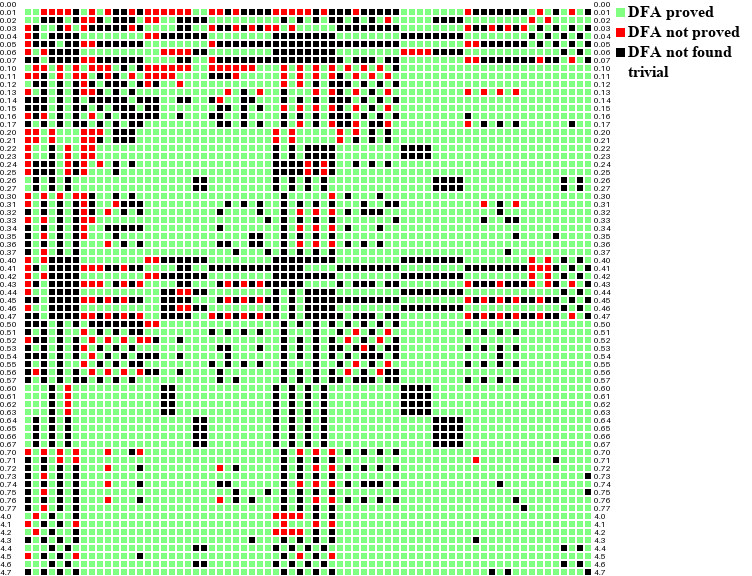}
    \caption{Results for two-digit partizan octal games under misère play}
    \label{fig:MisPartOctal}
\end{figure}

\subsubsection{Algebraic Periodicity}

Concerning misère octal games, it is worth discussing the phenomenon of ``algebraic periodicity'' mentioned in~\cite{MisQuot2006}:

``
When this regularity continues indefinitely, the full quotient can be deduced from a finite number of partial quotients;
but unlike in the ``stable'' case, the full quotient is infinite, whereas every partial quotient is finite.
We've christened this behavior ``algebraic periodicity,'' but we can't give
a precise definition because we don't fully understand how to describe it in general.
''

That reference gives the following examples of impartial games exhibiting ``algebraic periodicity'':
\[
    \mathbf{0.26},\ \mathbf{4.7},\ \mathbf{0.54},\ \mathbf{0.145},
    \mathbf{0.157},\ \mathbf{0.175},\ \mathbf{0.355},\ \mathbf{0.357},\ \mathbf{0.516},\ \mathbf{0.724},\ \mathbf{0.734}.
\]
Among these, $\mathbf{0.26}$ and $\mathbf{4.7}$ have been successfully solved by Allemang~\cite{1984Algper} (with some errors corrected in~\cite{MisQuot2006}),
while $\mathbf{0.54}$ and $\mathbf{0.145}$ are observed to have more complex behaviour than the other examples.

Here we attempt to give a rigorous definition of ``algebraic periodicity''; based on results from smaller-scale computations, it can encompass all of the above examples except the two more complex cases $\mathbf{0.54}$ and $\mathbf{0.145}$:

\begin{definition}

    We say that a game is \textit{algebraically periodic} if there exist natural numbers $p_1, p_2\in\mathbb{N}$ and positive integers $t_1, t_2\in\mathbb{N}^+$,
    such that for every position (with heaps sorted in non-increasing order) with largest heap $a_1$ sufficiently large, i.e., $a_1\geq p_1+t_1$,
    \[
         u=(a_1, a_2, \ldots, a_n)\in\varU,~~ a_1\geq a_2\geq \ldots \geq a_n,
    \]
    it can always be reduced to a smaller position in one of the following two ways:

    (1)~\textit{Single-heap reduction}: if the second-largest heap $a_2 < p_2+t_2$ (in particular, when $n=1$ we regard $a_2=0 < p_2+t_2$), then
    \[
        o^-(u)=o^-(a_1-t_1, a_2, \ldots, a_n).
    \]
    (2)~\textit{Two-heap reduction}: if $a_2 \geq p_2+t_2$, then
    \[
        o^-(u)=o^-(a_1-t_2, a_2-t_2, a_3 \ldots, a_n).
    \]

    We call the lexicographically smallest tuple $(p_1, t_1, p_2, t_2)$ the \textit{parameters} of algebraic periodicity,
    where $(p_1, t_1)$ and $(p_2, t_2)$ can be viewed as the $(\text{preperiod}, \text{period})$ under the two reduction schemes, respectively.
\end{definition}

It is easy to see that any position can be reduced by repeatedly applying the two reductions until every heap is smaller than $p_1+t_1$.
According to~\cite{MisQuot2006}, such positions always have a finite misère quotient, and hence are solvable.

Figure~\ref{fig:algper_oc_images} shows outcome images for some algebraically periodic octal games, where the red and cyan shaded regions indicate positions to which the single-heap reduction and the two-heap reduction apply, respectively.
For a position $u=(a_1, a_2, \ldots, a_n)$, we define the difference between its largest heap and the sum of the remaining heaps as
\[
    \Delta(u) \coloneq a_1-a_2-\ldots-a_n,\ \text{where~~}a_1=\max(u).
\]
If this difference is sufficiently large, then no matter how one applies the two-heap reduction, the largest heap will always retain too many tokens, forcing a subsequent single-heap reduction;
conversely, only two-heap reductions are needed.
Thus, positions in algebraically periodic games can often be divided into the following three categories, each with different patterns:

(1)~``Corner'' positions: $\max(u)$ is sufficiently small.

(2)~``Inner'' positions: $\max(u)$ is sufficiently large and $\Delta(u)$ is sufficiently small.

(3)~``Outer'' positions: $\max(u)$ is sufficiently large and $\Delta(u)$ is sufficiently large.

\begin{figure}
    \centering
    \includegraphics[width=0.9\linewidth]{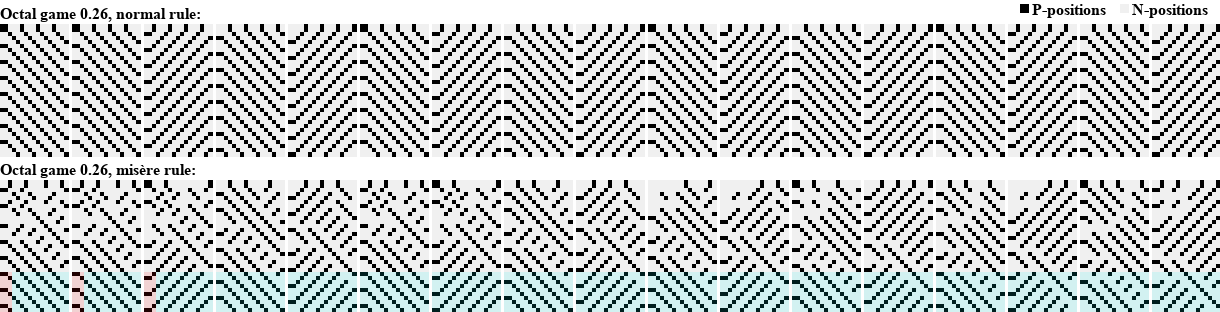}
    \includegraphics[width=0.9\linewidth]{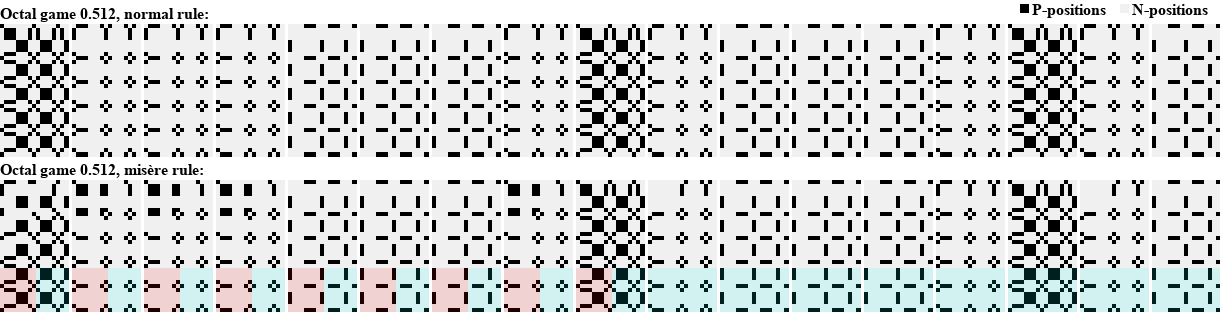}
    \includegraphics[width=0.9\linewidth]{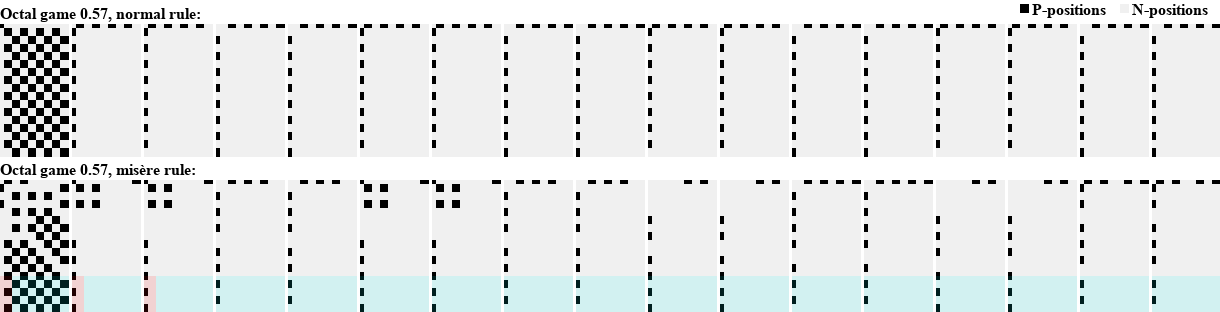}
    \includegraphics[width=0.9\linewidth]{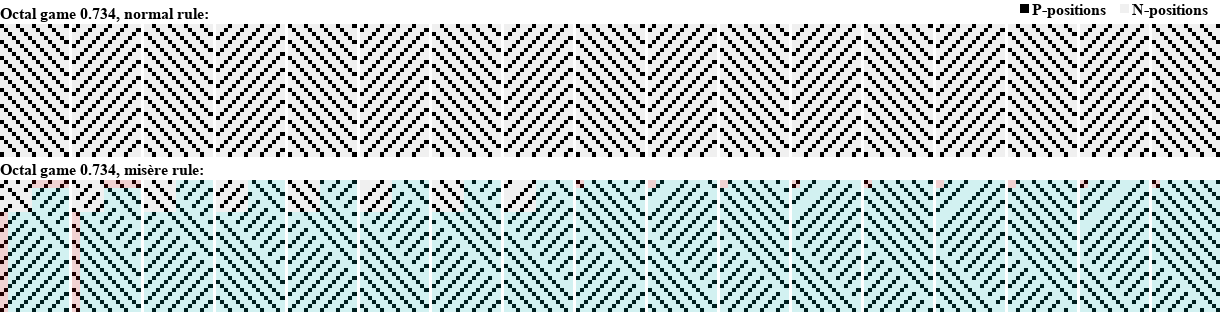}

    \caption{Outcomes of smaller three-heap positions for 0.26, 0.512, 0.57 and 0.734}
    \label{fig:algper_oc_images}
\end{figure}

Taking the simplest of these, $\mathbf{0.73\dot{4}}$, as an example (its image is shown in Figure~\ref{fig:algper_oc_images}), the $\varP$-positions in its three categories are respectively:
\begin{align*}
    \varP_{\mathrm{cor}} &\coloneq \underbrace{(1, 1, \ldots, 1)}_{\text{odd number}}; \\
    \varP_{\mathrm{inn}} &\coloneq \{u \mid u\in\varU,\ \max(u)> 2, \\
            &~~~~~~~~\Delta(u)\leq 1,\ 2\mid \Sigma(u),\ 4\mid (\Sigma(u)+\Omega(u))\}; \\
    \varP_{\mathrm{out}} &\coloneq \{u \mid u\in\varU,\ \max(u)> 2, \\
            &~~~~~~~~\Delta(u)>1,\ 2\mid \Delta(u),\ 4\mid (\Delta(u)+\Omega(u))\}.
\end{align*}
Here $\Sigma(u)$ denotes the sum of all heaps, $\Omega(u)$ denotes the number of odd-sized heaps in $u$, and the vertical bar $\mid$ after the numbers denotes divisibility.
The correctness of the $\varP$-positions thus given can be proved by case analysis, but even for the simplest case $\mathbf{0.73\dot{4}}$ it is quite tedious, and for the other algebraically periodic games it is even more difficult.

The following tables list all algebraically periodic impartial octal games we have found together with their parameters, as well as the periodicity
\footnote{Given as a pair (preperiod, period).}
of outcomes of single-heap positions (starting from size 1, assuming algebraic periodicity holds):

\begin{center}
    \begin{tabular}{|c|cccc||c|cccc|}
        \hline
        Code & $p_1$ & $t_1$ & $p_2$ & $t_2$ & Code & $p_1$ & $t_1$ & $p_2$ & $t_2$ \\
        \hline
        $\mathbf{0.157}$ & 22 & 6 & 1 & 3       & $\mathbf{0.57\dot{4}}$ & 19 & 4 & 1 & 2 \\
        $\mathbf{0.175}$ & 19 & 4 & 1 & 2       & $\mathbf{0.71\dot{6}}$ & 22 & 2 & 2 & 1 \\
        $\mathbf{0.2\dot{6}\bar{0}}$ & 19 & 4 & 1 & 2 & $\mathbf{0.72\bar{4}}$ & 19 & 4 & 1 & 2 \\
        $\mathbf{0.32\dot{6}}$ & 17 & 4 & 1 & 2 & $\mathbf{0.73\dot{2}}$ & 20 & 4 & 0 & 2 \\
        $\mathbf{0.337}$ & 22 & 4 & 0 & 2       & $\mathbf{0.73\dot{4}}$ & 4 & 4 & 0 & 2  \\
        $\mathbf{0.355}$ & 20 & 2 & 1 & 1       & $\mathbf{0.73\dot{6}}$ & 19 & 4 & 0 & 2 \\
        $\mathbf{0.357}$ & 28 & 2 & 1 & 3       & $\mathbf{0.75\dot{4}}$ & 17 & 2 & 1 & 1 \\
        $\mathbf{0.51\dot{2}}$ & 16 & 6 & 3 & 6 & $\mathbf{0.75\dot{6}}$ & 24 & 2 & 1 & 3 \\
        $\mathbf{0.51\dot{6}}$ & 20 & 6 & 1 & 3 & $\mathbf{4.\ddot{0}\dot{6}}$ & 16 & 4 & 1 & 2 \\
        $\mathbf{0.53\dot{4}}$ & 19 & 4 & 1 & 2 & $\mathbf{4.1\dot{6}}$ & 16 & 4 & 1 & 2 \\
        $\mathbf{0.55\dot{2}}$ & 16 & 6 & 1 & 3 & $\mathbf{4.5\dot{2}}$ & 17 & 4 & 1 & 2 \\
        $\mathbf{0.55\dot{6}}$ & 20 & 6 & 1 & 3 & $\mathbf{4.7\dot{0}}$ & 16 & 2 & 1 & 1 \\
        $\mathbf{0.57\dot{0}}$ & 20 & 4 & 1 & 2 & &&&& \\
        \hline
    \end{tabular}
\end{center}

\begin{center}
    \small
    \begin{tabular}{|c|c|c|}
        \hline
        Code & Outcomes & Period \\
        \hline
        $\mathbf{0.157}$          & ${}_{\tabP\tabN\tabN\tabN\tabN\tabN\tabN\tabN\tabN\tabN\tabN\tabN\tabN\tabN\tabN\tabN\tabN\tabN\tabN\tabN\tabN\tabN\tabP\tabP\tabP\tabN\tabN\tabN\tabP\tabP\tabP\tabN\tabN\tabN\tabP \ldots}$ & $(19,6)$    \\
        $\mathbf{0.175}$          & ${}_{\tabP\tabN\tabN\tabN\tabN\tabN\tabN\tabN\tabN\tabN\tabN\tabN\tabN\tabN\tabN\tabN\tabP\tabP\tabN\tabN\tabP\tabP\tabN\tabN\tabP\tabP\tabN\tabN\tabP\tabP\tabN\tabN\tabP\tabP\tabN \ldots}$ & $(14,4)$    \\
        $\mathbf{0.2\dot{6}\bar{0}}$ & ${}_{\tabN\tabN\tabP\tabN\tabN\tabN\tabP\tabN\tabN\tabN\tabN\tabP\tabN\tabN\tabN\tabP\tabN\tabN\tabN\tabP\tabN\tabN\tabN\tabP\tabN\tabN\tabN\tabP\tabN\tabN\tabN\tabP\tabN\tabN\tabN \ldots}$ & $(8,4)$     \\
        $\mathbf{0.32\dot{6}}$    & ${}_{\tabP\tabN\tabP\tabN\tabN\tabN\tabP\tabN\tabN\tabN\tabN\tabN\tabN\tabN\tabN\tabN\tabN\tabP\tabN\tabN\tabN\tabP\tabN\tabN\tabN\tabP\tabN\tabN\tabN\tabP\tabN\tabN\tabN\tabP\tabN \ldots}$ & $(14,4)$    \\
        $\mathbf{0.337}$          & ${}_{\tabP\tabN\tabN\tabN\tabP\tabN\tabN\tabN\tabN\tabN\tabN\tabN\tabN\tabN\tabN\tabP\tabN\tabN\tabN\tabP\tabN\tabN\tabN\tabP\tabN\tabN\tabN\tabP\tabN\tabN\tabN\tabP\tabN\tabN\tabN \ldots}$ & $(12,4)$    \\
        $\mathbf{0.355}$          & ${}_{\tabP\tabN\tabN\tabN\tabN\tabN\tabN\tabN\tabN\tabN\tabN\tabN\tabN\tabN\tabP\tabN\tabP\tabN\tabP\tabN\tabP\tabN\tabP\tabN\tabP\tabN\tabP\tabN\tabP\tabN\tabP\tabN\tabP\tabN\tabP \ldots}$ & $(13,2)$    \\
        $\mathbf{0.357}$          & ${}_{\tabP\tabN\tabN\tabN\tabN\tabN\tabN\tabN\tabN\tabN\tabN\tabN\tabN\tabN\tabP\tabN\tabP\tabN\tabP\tabN\tabP\tabN\tabP\tabN\tabP\tabN\tabP\tabN\tabP\tabN\tabP\tabN\tabP\tabN\tabP \ldots}$ & $(13,2)$    \\
        $\mathbf{0.51\dot{2}}$    & ${}_{\tabP\tabN\tabN\tabN\tabN\tabN\tabN\tabN\tabN\tabN\tabN\tabN\tabN\tabP\tabN\tabN\tabN\tabN\tabN\tabP\tabN\tabP\tabN\tabN\tabN\tabP\tabN\tabP\tabN\tabN\tabN\tabP\tabN\tabP\tabN \ldots}$ & $(16,6)$    \\
        $\mathbf{0.51\dot{6}}$    & ${}_{\tabP\tabN\tabN\tabN\tabN\tabN\tabN\tabN\tabN\tabN\tabN\tabN\tabN\tabN\tabN\tabN\tabN\tabN\tabN\tabN\tabP\tabP\tabP\tabN\tabN\tabN\tabP\tabP\tabP\tabN\tabN\tabN\tabP\tabP\tabP \ldots}$ & $(17,6)$    \\
        $\mathbf{0.53\dot{4}}$    & ${}_{\tabP\tabN\tabN\tabN\tabN\tabN\tabN\tabN\tabN\tabN\tabN\tabN\tabN\tabN\tabN\tabN\tabP\tabP\tabN\tabN\tabP\tabP\tabN\tabN\tabP\tabP\tabN\tabN\tabP\tabP\tabN\tabN\tabP\tabP\tabN \ldots}$ & $(14,4)$    \\
        $\mathbf{0.55\dot{2}}$    & ${}_{\tabP\tabN\tabN\tabN\tabN\tabN\tabN\tabN\tabN\tabN\tabN\tabN\tabN\tabN\tabP\tabN\tabN\tabN\tabN\tabP\tabP\tabP\tabN\tabN\tabN\tabP\tabP\tabP\tabN\tabN\tabN\tabP\tabP\tabP\tabN \ldots}$ & $(16,6)$    \\
        $\mathbf{0.55\dot{6}}$    & ${}_{\tabP\tabN\tabN\tabN\tabN\tabN\tabN\tabN\tabN\tabN\tabN\tabN\tabN\tabN\tabN\tabN\tabN\tabN\tabN\tabN\tabP\tabP\tabP\tabN\tabN\tabN\tabP\tabP\tabP\tabN\tabN\tabN\tabP\tabP\tabP \ldots}$ & $(17,6)$    \\
        $\mathbf{0.57\dot{0}}$    & ${}_{\tabP\tabN\tabN\tabN\tabN\tabN\tabN\tabN\tabN\tabN\tabN\tabN\tabN\tabN\tabN\tabP\tabP\tabN\tabN\tabP\tabP\tabN\tabN\tabP\tabP\tabN\tabN\tabP\tabP\tabN\tabN\tabP\tabP\tabN\tabN \ldots}$ & $(13,4)$    \\
        $\mathbf{0.57\dot{4}}$    & ${}_{\tabP\tabN\tabN\tabN\tabN\tabN\tabN\tabN\tabN\tabN\tabN\tabN\tabN\tabN\tabN\tabN\tabP\tabP\tabN\tabN\tabP\tabP\tabN\tabN\tabP\tabP\tabN\tabN\tabP\tabP\tabN\tabN\tabP\tabP\tabN \ldots}$ & $(14,4)$    \\
        $\mathbf{0.71\dot{6}}$    & ${}_{\tabP\tabN\tabN\tabN\tabP\tabN\tabP\tabN\tabN\tabN\tabN\tabN\tabN\tabN\tabN\tabN\tabN\tabP\tabN\tabN\tabN\tabN\tabN\tabP\tabN\tabP\tabN\tabP\tabN\tabP\tabN\tabP\tabN\tabP\tabN \ldots}$ & $(22,2)$    \\
        $\mathbf{0.72\bar{4}}$    & ${}_{\tabP\tabN\tabP\tabN\tabN\tabN\tabP\tabN\tabN\tabN\tabN\tabN\tabN\tabN\tabN\tabN\tabN\tabP\tabN\tabN\tabN\tabP\tabN\tabN\tabN\tabP\tabN\tabN\tabN\tabP\tabN\tabN\tabN\tabP\tabN \ldots}$ & $(14,4)$    \\
        $\mathbf{0.73\dot{2}}$    & ${}_{\tabP\tabN\tabN\tabN\tabP\tabN\tabN\tabN\tabN\tabN\tabN\tabP\tabN\tabN\tabN\tabP\tabN\tabN\tabN\tabP\tabN\tabN\tabN\tabP\tabN\tabN\tabN\tabP\tabN\tabN\tabN\tabP\tabN\tabN\tabN \ldots}$ & $(8,4)$     \\
        $\mathbf{0.73\dot{4}}$    & ${}_{\tabP\tabN\tabN\tabN\tabN\tabN\tabN\tabP\tabN\tabN\tabN\tabP\tabN\tabN\tabN\tabP\tabN\tabN\tabN\tabP\tabN\tabN\tabN\tabP\tabN\tabN\tabN\tabP\tabN\tabN\tabN\tabP\tabN\tabN\tabN \ldots}$ & $(4,4)$     \\
        $\mathbf{0.73\dot{6}}$    & ${}_{\tabP\tabN\tabN\tabN\tabP\tabN\tabN\tabN\tabN\tabN\tabN\tabN\tabN\tabP\tabN\tabN\tabN\tabP\tabN\tabN\tabN\tabP\tabN\tabN\tabN\tabP\tabN\tabN\tabN\tabP\tabN\tabN\tabN\tabP\tabN \ldots}$ & $(10,4)$    \\
        $\mathbf{0.75\dot{4}}$    & ${}_{\tabP\tabN\tabN\tabN\tabN\tabN\tabN\tabN\tabN\tabN\tabP\tabN\tabP\tabN\tabP\tabN\tabP\tabN\tabP\tabN\tabP\tabN\tabP\tabN\tabP\tabN\tabP\tabN\tabP\tabN\tabP\tabN\tabP\tabN\tabP \ldots}$ & $(9,2)$     \\
        $\mathbf{0.75\dot{6}}$    & ${}_{\tabP\tabN\tabN\tabN\tabN\tabN\tabN\tabN\tabN\tabN\tabP\tabN\tabP\tabN\tabP\tabN\tabP\tabN\tabP\tabN\tabP\tabN\tabP\tabN\tabP\tabN\tabP\tabN\tabP\tabN\tabP\tabN\tabP\tabN\tabP \ldots}$ & $(9,2)$     \\
        $\mathbf{4.\ddot{0}\dot{6}}$ & ${}_{\tabN\tabN\tabP\tabN\tabN\tabN\tabN\tabN\tabN\tabN\tabN\tabP\tabN\tabN\tabN\tabP\tabN\tabN\tabN\tabP\tabN\tabN\tabN\tabP\tabN\tabN\tabN\tabP\tabN\tabN\tabN\tabP\tabN\tabN\tabN \ldots}$ & $(8,4)$     \\
        $\mathbf{4.1\dot{6}}$     & ${}_{\tabP\tabN\tabN\tabN\tabN\tabN\tabN\tabN\tabN\tabN\tabN\tabP\tabN\tabN\tabP\tabP\tabN\tabN\tabP\tabP\tabN\tabN\tabP\tabP\tabN\tabN\tabP\tabP\tabN\tabN\tabP\tabP\tabN\tabN\tabP \ldots}$ & $(11,4)$    \\
        $\mathbf{4.5\dot{2}}$     & ${}_{\tabP\tabN\tabN\tabN\tabN\tabN\tabN\tabN\tabN\tabP\tabN\tabN\tabP\tabP\tabN\tabN\tabP\tabP\tabN\tabN\tabP\tabP\tabN\tabN\tabP\tabP\tabN\tabN\tabP\tabP\tabN\tabN\tabP\tabP\tabN \ldots}$ & $(9,4)$     \\
        $\mathbf{4.7\dot{0}}$     & ${}_{\tabP\tabN\tabN\tabN\tabN\tabN\tabN\tabN\tabP\tabN\tabP\tabN\tabP\tabN\tabP\tabN\tabP\tabN\tabP\tabN\tabP\tabN\tabP\tabN\tabP\tabN\tabP\tabN\tabP\tabN\tabP\tabN\tabP\tabN\tabP \ldots}$ & $(7,2)$     \\
        \hline
    \end{tabular}
    \normalsize
\end{center}

The above results all come from observations of computations on smaller positions; we have not yet found a method for rigorously proving such algebraic periodicity.
To obtain complete solutions to these games, the remaining problem is:
\begin{problem}
    How can one rigorously prove the algebraic periodicity that has been found?
\end{problem}

In fact, we observe that a considerable number of unsolved partizan octal games under misère play also appear to exhibit non-trivial algebraic periodicity (i.e., beyond the case where an outcome DFA exists and ordinary periodicity is present),
but the situation is more complex, and sometimes it is difficult to distinguish whether they have ordinary periodicity or algebraic periodicity; hence we will not discuss this further here.

Moreover, although partizan octal games under normal play do not satisfy the conditions of the Sprague--Grundy theorem, we have not found any among them that exhibit non-trivial algebraic periodicity.

\begin{problem}
    Do normal-play partizan octal games admit no non-trivial algebraic periodicity?
    If so, does this suggest some specific structure of the partizan setting under normal play that distinguishes it from misère play (analogous to the Sprague--Grundy theorem in the impartial case)?
\end{problem}

As for the two exceptional cases \textbf{0.145} and \textbf{0.54},
we observe that their outcome images (Figure~\ref{fig:weak_algper_oc_images}) appear very close to those of algebraically periodic games, but not exactly identical.
In fact, there are a small number of ``interference positions'' that break algebraic periodicity, yet the regions in which these interference positions occur seem to be unbounded, making them difficult to eliminate.

\begin{figure}
    \centering
    \includegraphics[width=0.9\linewidth]{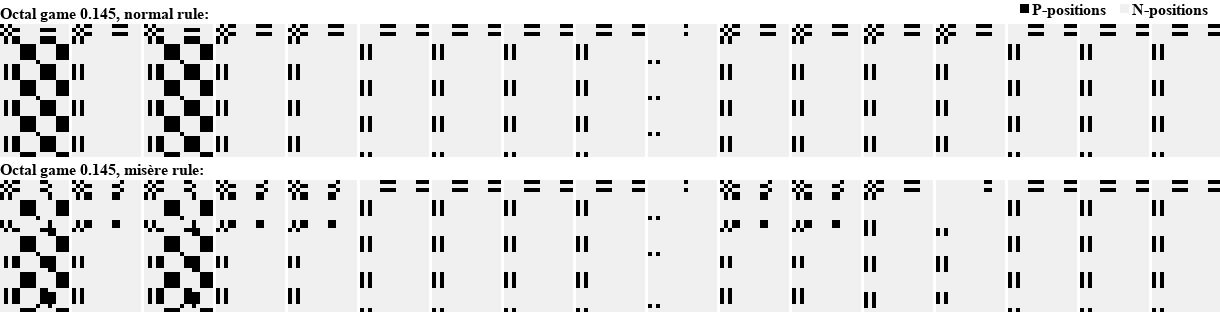}
    \includegraphics[width=0.9\linewidth]{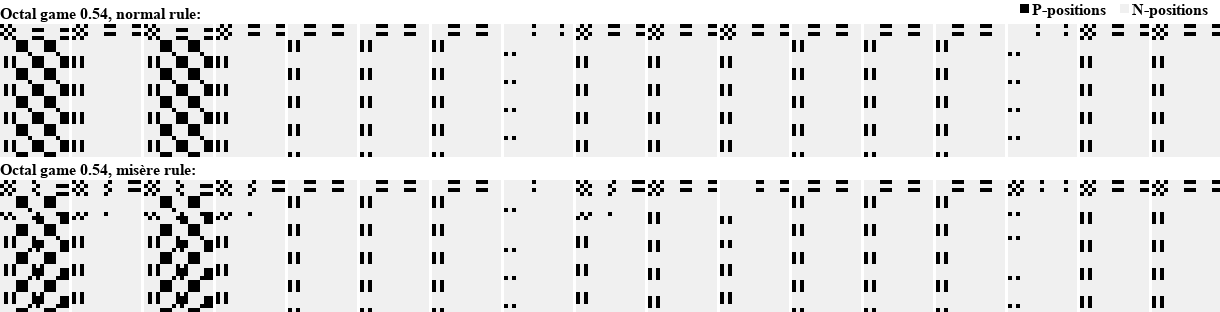}

    \caption{Outcomes of smaller three-heap positions for 0.145 and 0.54}
    \label{fig:weak_algper_oc_images}
\end{figure}

\begin{problem}
    Can one define a notion of ``weak algebraic periodicity'' that encompasses the two exceptional cases $\mathbf{0.145}$ and $\mathbf{0.54}$?
\end{problem}

\section{Kotzig's Nim}
\subsection{Background}

\textit{Kotzig's nim}, also known as \textit{modular nim}, is one of the most classical impartial games, invented by Anton Kotzig in 1946.
The rules of the game are as follows:
Given a circular board with $n$ cells, two players alternate moving a single token clockwise, where the number of steps moved on each turn is taken from a fixed finite set $S$ of positive integers (called the \textit{move set}),
and the token may not be moved to any cell that has already been visited. Under normal play, the player who cannot move loses and the other wins; under misère play, the player who cannot move wins.

Prior research on Kotzig's nim has mainly focused on establishing and proving the periodicity of outcomes and SG values for games with small move sets under normal play:

(1)~\cite{Kotzig1995} gave the outcomes for $S=\{1, 2\}$ and $\{2, 3\}$, as well as the outcomes for most lengths (those not congruent to $3$ or $4$ modulo $7$) for $S=\{3, 4\}$.

(2)~\cite{Kotzig2005} included the outcomes for $S=\{1, 3\}$ due to Nowakowski.

(3)~\cite{Kotzig2014} gave the outcomes for $S=\{1, 4\}$ and proposed a series of conjectures on outcomes.

(4)~\cite{Kotzig2019} gave the SG values for $S=\{1, 2\}$ and $\{1, 3\}$, as well as the SG values for a small subset of lengths (those congruent to $2a$ modulo $2a+1$) for $S=\{a, a+1\}$ with $a>1$.

In addition,~\cite{Kotzig2025} studied the computational complexity of \textit{Geography games}, proving that solving Geography games on bipartite graphs or directed acyclic graphs in the general case is NP-hard.
That paper studied Kotzig's game as a special case of Geography games, and partially resolved a conjecture of Tan and Ward from~\cite{Kotzig2014} concerning a particular move set.

GONC~\cite{GONC} included this game and posed the following questions:

``\textbf{A6.~(17)}~Extend the analysis of KOTZIG'S NIM.

    Is the game eventually periodic in terms of the length of the circle for every finite move set?
    Analyze the misère version of KOTZIG'S NIM.''

In this chapter, we will prove this periodicity conjecture for outcomes — in fact, we generalise it to broader settings including misère play and partizan versions.
We will also provide periodicities of outcomes for some new unresolved cases.

\subsection{Line Games and Their Variants}

\subsubsection{Line Games}

In this chapter, we always denote the move set by $S$, and write $s_{\max}=\max S$ for the maximum step size.

We first study the \textit{line game} mentioned in~\cite{Kotzig2019}.
It is equivalent to Kotzig's nim with the restriction that the token may not move far enough around the board to complete a full circle; in other words, the circular board is cut open at the starting point and straightened into a line:

\begin{center}
    \begin{tikzpicture}[scale=0.5,rotate=90]

        \foreach \deg/\i in {0/$\mathbf{T}$,(1*36)/$c_1$,(2*36)/$c_2$,(3*36)/$c_3$,(4*36)/$c_4$} {
            \filldraw[draw=black,fill=white] ({cos(\deg)*1.5}, {sin(\deg)*1.5}) -- ({cos(\deg)*2.5}, {sin(\deg)*2.5}) arc (\deg:\deg+36:2.5) -- ({cos(\deg+36)*1.5}, {sin(\deg+36)*1.5}) arc (\deg+36:\deg:1.5);
            \node at ({cos(\deg+18)*2}, {sin(\deg+18)*2}) {\i};
        }

        \draw (2.5, 0)--(2.5, -3.5);\draw (1.5, 0)--(1.5, -3.5);
        \node at(2,-0.6) {$c_{n\!-\!1}$};\draw (2.5,-1.2)--(1.5,-1.2);\node at(2,-1.8) {$\ldots$};

        \draw (-1.5, 0)--(-1.5, -3.5);\draw (-2.5, 0)--(-2.5, -3.5);
        \node at(-2,-0.6) {$c_5$};\draw (-2.5,-1.2)--(-1.5,-1.2);\node at(-2,-1.8) {$\ldots$};

        \draw[dashed] (2.5, 0)--(3.5,0);\node at (3.5,0) [above] {starting point};

        \draw ({cos(180)*2.5}, {sin(180)*2.5-3.5}) arc (180:360:2.5);
        \draw ({cos(180)*1.5}, {sin(180)*1.5-3.5}) arc (180:360:1.5);

    \end{tikzpicture}
    \raisebox{0.2cm}{$~~\Rightarrow~~$}
    \begin{tikzpicture}[scale=0.5]
        \draw (0, 0)--(8.5,0)--(8.5,1)--(0,1)--(0,0);

        \node at (0.6,0.5) {$\mathbf{M}$}; \draw (1.2,0)--(1.2,1);\node at (1.8,0.5) {$c_1$}; \draw (2.4,0)--(2.4,1);\node at (3.0,0.5) {$c_2$};
        \draw (3.6,0)--(3.6,1);\node at (4.2,0.5) {$c_3$}; \draw (4.8,0)--(4.8,1);\node at (5.4,0.5) {$\ldots$};

        \draw (7.3,0)--(7.3,1);\node at (7.9,0.5) {$c_{n\!-\!1}$};
    \end{tikzpicture}

\end{center}

We denote the token by $\m$; $\e$ and $\x$ denote an \textit{empty cell} and an \textit{obstacle}, respectively: the token cannot move to a cell occupied by the latter.
We represent a position of a line game of board length $n$ by a word of the form
\[
    w = \m c_1 c_2 \ldots c_{n-1},\quad c_i \in \varSigma \coloneq \{\e, \x\}
\]
(note that the cell occupied by the token is also counted in the board length).
A legal move consists of moving the token $\m$ to the right by $s\in S$ cells, with the condition that the destination cell must be an empty cell $\e$; after the move, the $s$ characters to the left of the token are all deleted:
\[
    \m c_1 c_2 \ldots c_{s-1} \e c_{s+1} \ldots  c_{n-1}  \mapsto \m c_{s+1} \ldots c_{n-1}.
\]
In fact, under the rules of Kotzig's nim, the empty cell $\e$ originally occupied by the token would become an obstacle $\x$;
however, in the line game, all characters to the left of the token play no further role, so they can simply be deleted.
It is easy to see that if the initial position contains no obstacles, the game is equivalent to the subtraction game with move set $S$ (see~\cite{subs_game}); hence the line game can also be viewed as a generalisation of subtraction games with obstacles.

By means of the ``label-flipping'' method mentioned in the previous chapter, we can readily define a partizan version of Kotzig's nim and its labelled positions:
define the label set $\varTheta\coloneq \{\rmL, \rmR\}$,
assign two different move sets $S_\rmL$ and $S_\rmR$ to the two players Left and Right, respectively; labelled positions are of the form
\[
    w = \theta c_1 c_2 \ldots c_{n-1},\quad c_i \in \varSigma,\ \theta \in\varTheta.
\]
A position with label $\theta$ is moved according to the step set $S_\theta$ corresponding to that label, and the label is flipped to the other one after the move.
We denote the resulting game rules by $S_\rmL \col S_\rmR$.

Line games have a nice property: we can determine not only their outcomes but also their SG values using DFAs.
In fact,~\cite{Kotzig2019} already provided the idea for constructing such a DFA.
We give a proof of this, since we will subsequently prove the stronger results~\ref{line_dfa} and~\ref{line_sg_dfa}.

\subsubsection{FA-Line Games}

As can be seen, if we regard labels as states and the remaining characters as alphabet symbols, the move rules of such a line game closely resemble the transitions of an FA with only two states.
We can further generalise this game by imitating the structure of an FA, yielding \textit{FA-line games}: the labels are no longer restricted to two, and the label after a move is determined by the current label and the characters crossed.
Specifically, the structure is a quadruple
\[
    G= \langle \varSigma, \varTheta, \varDelta, \theta_0, \gamma\rangle,
\]
where

(1)~$\varSigma$ is the \textit{alphabet}. In this paper we only consider
\[
    \varSigma = \{\e, \x\}.
\]

(2)~$\varTheta$ is a finite set of \textit{labels}, called the \textit{label set} (analogous to the state set of an FA).
    The token carries one of these labels at any time.
    For brevity, we directly use the label symbol to denote the token.

(3)~$\varDelta$ is a mapping from $\varTheta \times \varSigma^* \e$ to the power set of $\varTheta$, called the \textit{move mapping} (analogous to the transition mapping of an FA),
where a pair $(\theta, v\e)\mapsto \theta'$ means:
if the token carries label $\theta$, and to its right there is a factor $v\e$, it may cross over $v$, land on the empty cell $\e$, and change its label to $\theta'$.

To ensure finiteness, we require that the move mapping has a finite \textit{maximum step size} $s_{\max}$, defined as:
\begin{align*}
    S        &\coloneq \{ |w\e| \mid \exists \theta \in \varTheta,\ \varDelta(\theta,w\e) \neq \varnothing \},\\
    s_{\max} &\coloneq \max S.
\end{align*}

(4)~ $\theta_0 \in\varTheta$ is the \textit{initial label}, i.e., the label carried by the token in the initial position (analogous to the designated initial state of an FA).

(5)~ $\gamma$ is a \textit{seed}
\footnote{This term comes from~\cite{Kotzig2019}; what we define here is in fact a labelled variant thereof.}
, consisting of $s_{\max}$ mappings
\[
    \gamma_i: \varTheta \rightarrow \{\x, \rmP, \rmN\},\quad i=1,2,\ldots,s_{\max},
\]
which artificially assign outcomes to the $s_{\max}$ positions beyond the right end of the board (analogous to the designated final states of an FA).
When the token attempts to enter the $i$-th position among the last $s_{\max}$ cells according to the move mapping and changes its label to $\theta$, the mapping
\[
    \gamma_i(\theta) \in \operatorname{Im}(\gamma) \coloneq \{\x, \e_{\varP}, \e_{\varN}\}
\]
gives the result of this move:
$\x$ means the move is illegal (the destination cell is an obstacle $\x$); $\e_{\varP}$ means the destination cell is empty, but after landing there the moving player immediately loses; $\e_{\varN}$ means the destination cell is empty and the moving player immediately wins.
We require that whether each cell is an obstacle must be consistent across all labels: for every label $\theta\in\varTheta$,
either $\gamma_i(\theta)=\x$ for all $\theta$, or $\gamma_i(\theta)\neq\x$ for all $\theta$.
We define
\begin{align*}
    \bar{\gamma}_i & \coloneq
    \begin{cases}
        \x, & \gamma_i(\theta)=\x,\ \text{i.e., the $i$-th cell is an obstacle }\x; \\
        \e, & \gamma_i(\theta)\neq\x,\ \text{i.e., the $i$-th cell is an empty cell }\e.
    \end{cases}\\
    \bar{\gamma} & \coloneq \bar{\gamma}_1\bar{\gamma}_2\ldots \bar{\gamma}_{s_{\max}}.
\end{align*}
We denote the set of all seeds satisfying these requirements by $\varGamma$.

We can represent a seed as a matrix whose rows correspond to labels and columns to positions:
\[
    \gamma =
    \begin{matrix}
        \theta_1 : \\
        \theta_2 : \\
        \ldots \\
        \theta_{|\varTheta|}: \\
    \end{matrix}
    \begin{bmatrix}
        \gamma_1(\theta_1) & \gamma_2(\theta_1) & \ldots & \gamma_{s_{\max}}(\theta_1) \\
        \gamma_1(\theta_2) & \gamma_2(\theta_2) & \ldots & \gamma_{s_{\max}}(\theta_2) \\
        \vdots & \vdots & & \vdots \\
        \gamma_1(\theta_{|\varTheta|}) & \gamma_2(\theta_{|\varTheta|}) & \ldots & \gamma_{s_{\max}}(\theta_{|\varTheta|}) \\
    \end{bmatrix}
\]
where the $i$-th column is either all $\x$ (in which case $\bar{\gamma}_i=\x$) or none are $\x$ (in which case $\bar{\gamma}_i=\e$).

A position of an FA-line game consists, from left to right, of a labelled token, a sequence of symbols from $\varSigma$, and a seed
\footnote{Although by our definition the seed is fixed for the entire game, we include it as part of the position for intuitive clarity.}:
\begin{align*}
    & \theta c_1 c_2 \ldots c_{n-1} \gamma_1 \gamma_2\ldots\gamma_{s_{\max}}, \\
    & \text{~~~~where~~}\theta \in\varTheta, c_i\in\varSigma, \gamma\in\varGamma.
\end{align*}
The token $\theta$ moves from left to right and changes labels according to the characters crossed, until after some move it enters the seed portion (where the characters in $\bar{\gamma}$ that are crossed are also used for label transitions), at which point the outcome of the game is given by the seed.
That is, the seed effectively extends the board by an additional $s_{\max}$ cells, except that once the token lands on an empty cell within it, the game ends immediately and the winner is determined according to the (changed) label.

It is easy to see that if the seed contains no empty cells, it is equivalent to the original rule without a seed.
On this basis, the original impartial line game can be seen as a special case of FA-line games with
\begin{align*}
    \varTheta &= \{\m\}; \\
    \varDelta &= \{(\m, v\e)\mapsto \m' \mid v\in\varSigma^{s-1},\ s\in S \},
\end{align*}
while the partizan game is the special case with
\begin{align*}
    \varTheta &= \{\rmL, \rmR\}; \\
    \varDelta &= \{(\rmL, v\e)\mapsto \rmR \mid v\in\varSigma^{s-1},\ s\in S_{\rmL} \} \\
              &~~~~ \cup \{(\rmR, v\e)\mapsto \rmL \mid v\in\varSigma^{s-1},\ s\in S_{\rmR} \}.
\end{align*}

\subsubsection{Related DFAs}

Despite the more complex rules, FA-line games retain the property that the outcomes of positions can be determined by a DFA:

\begin{theorem}\label{line_dfa}
    For any FA-line game, any seed $\gamma\in\varGamma$, and any label $\theta_0 \in\varTheta$, the set of character parts of all $\varN$-positions with that seed and label,
    \[
        W = \{w \mid w\in\varSigma^*,\ \gamma w\theta_0 \in \varN\},
    \]
    is a regular language over $\varSigma$. The same holds for $\varP$-positions.
\end{theorem}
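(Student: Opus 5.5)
The approach is to read the board from right to left and use seeds themselves as the states of a DFA. The key observation is that a seed is exactly a finite summary of ``what happens beyond the right end of the board''. So a suffix $c_k c_{k+1}\ldots c_{n-1}\gamma$ of a position can always be compressed into a new seed $\gamma'\in\varGamma$ that records, for each of the next $s_{\max}$ cells and each label, whether that cell is an obstacle and, if not, what a token landing there with that label leads to. Since $\varGamma$ is finite (at most $3^{|\varTheta| s_{\max}}$ elements), this gives a finite state space. The notation $\gamma w \theta_0$ in the statement already suggests this reversed reading. I will build a DFA over $\varSigma$ that reads $w$ reversed; regularity of $W$ then follows because regular languages are closed under reversal (reverse all transitions of the DFA to get an NFA, then apply Theorem~\ref{reg_determ}).

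\textbf{Step 1 (compression).} Let $u=c_1\ldots c_m$ and let $\gamma$ be the original seed. Define $\sigma(u\gamma)\in\varGamma$ as the seed describing the $s_{\max}$ cells immediately to the right of a token placed just before $c_1$, as follows.
\begin{itemize}
\item For $i\le m$, set $\sigma(u\gamma)_i(\theta)=\x$ if $c_i=\x$. Otherwise set it to $\e_{\varP}$ if the position with a token labelled $\theta$ on cell $c_i$, followed by $c_{i+1}\ldots c_m\gamma$, is an $\varN$-position (the mover who landed there loses), and to $\e_{\varN}$ if that position is a $\varP$-position.
\item For $i>m$, take the entries from $\gamma$ shifted by $m$ positions.
\end{itemize}
I will then check two things. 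First, $\sigma$ satisfies the column-consistency requirement defining $\varGamma$; this holds because obstacle status depends only on the cell. Second, for every label $\theta$, the outcome of $\theta u\gamma$ equals the outcome of the seeded position $\theta\,\sigma(u\gamma)$ with empty board. This is a direct comparison with Definition~\ref{def:outcome}: each legal move $(\theta,v\e)\mapsto\theta'$ lands on some cell $i\le s_{\max}$. The crossed characters $v$ are the same in both positions, because they are read from $c_1\ldots c_m\bar\gamma$, and the values $\bar\gamma$ are preserved by the shift. The outcome reached after that move is exactly what the seed entry records.

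\textbf{Step 2 (transitions).} Show that $\sigma(cu\gamma)$ is determined by $c$ and $\sigma(u\gamma)$ alone. Entries $2,\ldots,s_{\max}$ of the new seed are entries $1,\ldots,s_{\max}-1$ of the old one. The first column is $\x$ if $c=\x$. If $c=\e$, the first column is obtained by computing, for each $\theta$, the outcome of a token labelled $\theta$ on that cell. Its moves are determined by $\varDelta$ applied to $\bar\sigma(u\gamma)$, and their results are read off $\sigma(u\gamma)$, exactly as in Step 1. This yields a deterministic transition function $\Gamma\times\varSigma\to\Gamma$, with initial state $\gamma$ (corresponding to the empty $u$). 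Accepting states are those seeds $\gamma'$ for which the seeded empty-board position $\theta_0\gamma'$ is $\varN$ (respectively $\varP$ for the second claim). By Step 1, this DFA accepts exactly the reversal of $W$.

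The main obstacle I expect is Step 1's bookkeeping rather than any deep difficulty. Three points need care: keeping the sign convention of $\e_{\varP}$/$\e_{\varN}$ straight (the entry refers to the mover who lands, so it is the opposite of the outcome of the landed position); handling moves whose crossed factor $v$ straddles the boundary between $u$ and the seed, which is why $\bar\gamma$ must be retained; and treating short words $m<s_{\max}$ uniformly, which the shift definition handles. I would verify these by induction on $m$.
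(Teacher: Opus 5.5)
Your proposal is correct and is essentially the paper's proof. It uses seeds as DFA states read right to left. Each symbol shifts the columns and prepends either an all-$\x$ column or a column of one-step outcomes. Acceptance is decided by the seeded empty-board position $\theta_0\gamma'$, and closure under reversal finishes the argument; your Step~1 compression invariant makes explicit what the paper leaves implicit. One small point in your favour: your convention that the new entry is the opposite of the landed position's outcome matches the paper's verbal definition of $\e_{\varP},\e_{\varN}$ and its final-state rule, whereas the paper's transition writes $\e_{o(\theta_i\gamma)}$ with the subscript flipped.
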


\begin{proof}
    Unlike the usual direction, the DFA constructed in this proof reads characters from right to left.
    To obtain an equivalent left-to-right DFA, one simply reverses all state transitions and then determinises.

    We take the set of all seeds $\varGamma$ as the state set of the DFA, and construct the transition mapping according to the recursive outcome relations,
    so that starting from the initial state $\gamma$, it reads characters one by one from right to left, and after all characters are read, the outcome is given according to the token label.
    The key point that guarantees finiteness of the state set is that the legal move lengths are bounded:
    since a seed $\gamma$ contains $s_{\max}$ positions (the length of the seed), after a new position is recorded by $\gamma$, the number of positions becomes $s_{\max}+1$, but the rightmost position is unreachable by any legal move and can therefore be deleted, keeping the number of positions bounded by $s_{\max}$.

    If the position is $\theta_0\gamma$, i.e., there are no characters between the token and the seed, then any legal move of the token enters the seed directly, and the outcome can be computed immediately:
    \[
        o(\theta_0\gamma) =
        \begin{cases}
            \varN, & \exists s\in S,\ \gamma_s(\varDelta(\bar{\gamma}_{1}\ldots\bar{\gamma}_{s-1}\e ,\theta_0)) = \rmN; \\
            \varP, & \text{otherwise}.
        \end{cases}
    \]
    This gives the final states of the DFA: $\gamma$ is a final state if and only if $\theta_0\gamma$ has the corresponding outcome.

    For the transition mapping, if an obstacle $\x$ is encountered, we simply delete the rightmost column and add a column of obstacles $\x$ on the left:
    \[
        \x\cdot
        \begin{bmatrix}
            \gamma_1(\theta_1) & \ldots & \gamma_{s_{\max}}(\theta_1) \\
            \gamma_1(\theta_2) & \ldots & \gamma_{s_{\max}}(\theta_2) \\
            \vdots & & \vdots \\
            \gamma_1(\theta_{|\varTheta|}) & \ldots & \gamma_{s_{\max}}(\theta_{|\varTheta|}) \\
        \end{bmatrix}
        =
        \begin{bmatrix}
            \x & \gamma_1(\theta_1) & \ldots & \gamma_{s_{\max}-1}(\theta_1) \\
            \x & \gamma_1(\theta_2) & \ldots & \gamma_{s_{\max}-1}(\theta_2) \\
            \vdots & \vdots & & \vdots \\
            \x & \gamma_1(\theta_{|\varTheta|}) & \ldots & \gamma_{s_{\max}-1}(\theta_{|\varTheta|}) \\
        \end{bmatrix}.
    \]

    If an empty cell $\e$ is encountered, we delete the rightmost column and add a column on the left whose entries are computed according to the legal moves for each label:
    \[
        \e\cdot
        \begin{bmatrix}
            \gamma_1(\theta_1) & \ldots & \gamma_{s_{\max}}(\theta_1) \\
            \gamma_1(\theta_2) & \ldots & \gamma_{s_{\max}}(\theta_2) \\
            \vdots & & \vdots \\
            \gamma_1(\theta_{|\varTheta|}) & \ldots & \gamma_{s_{\max}}(\theta_{|\varTheta|}) \\
        \end{bmatrix}
        =
        \begin{bmatrix}
            \e_{o(\theta_1 \gamma)} & \gamma_1(\theta_1) & \ldots & \gamma_{s_{\max}-1}(\theta_1) \\
            \e_{o(\theta_2 \gamma)} & \gamma_1(\theta_2) & \ldots & \gamma_{s_{\max}-1}(\theta_2) \\
            \vdots & \vdots & & \vdots \\
            \e_{o(\theta_{|\varTheta|} \gamma)} & \gamma_1(\theta_{|\varTheta|}) & \ldots & \gamma_{s_{\max}-1}(\theta_{|\varTheta|}) \\
        \end{bmatrix}.
    \]

    This gives the complete structure of a DFA satisfying the requirements.
\end{proof}

Although this proof is constructive, the resulting outcome DFA has an enormous number of states, namely the total number of possible seeds:
\[
    |\varGamma|=(2^{|\varTheta|}+1)^{s_{\max}}.
\]
This grows super-exponentially with the number of labels $|\varTheta|$.
In fact, games always have outcome DFAs with far fewer states; hence we do not use this construction to build DFAs, but only employ this result for subsequent proofs.

For misère play, the same conclusion holds by replacing the outcome $o(\cdot)$ in the above proof with $o^-(\cdot)$.
Moreover, if we replace it with $\varG(\cdot)$, we can also give NFAs (which can be called \textit{SG-value DFAs}) for determining the positions of an FA-line game with any given SG value:
the number of legal moves from any position of Kotzig's nim is always bounded by
\[
    m_{\max} \coloneq
    \begin{cases}
        |S|,                            & \text{impartial}; \\
        \max\{|S_{\rmL}|, |S_{\rmR}|\}, & \text{partizan}.
    \end{cases}
\]
By definition, the $\mex$ of a set of at most $m_{\max}$ numbers cannot exceed $m_{\max}$, so the SG value of any position $u\in\varU$ is also bounded:
\[
    \varG(u) \leq m_{\max}.
\]
We need only change the codomain of the seed $\gamma$ to
\[
    \{\x, \e_0, \e_1, \ldots, \e_{m_{\max}}\}
\]
to obtain an SG-value version of the seed.
Denoting the set of all such seeds by $\varGamma_{\varG}$, we can prove, analogously to~\ref{line_dfa}:

\begin{theorem}\label{line_sg_dfa}
    For any FA-line game, any SG-value seed $\gamma^-\in\varGamma_{\varG}$, any label $\theta_0 \in\varTheta$,
    and any SG value $g\in\mathbb{N}$, the set of character parts of positions with that seed and label whose SG value is $g$,
    \[
        W = \{w \mid w\in\varSigma^*,\ \varG(\gamma^- w\theta_0)=g\},
    \]
    satisfies:

    (1)~When $g \leq m_{\max}$, $W$ is a regular language over $\varSigma$.

    (2)~When $g > m_{\max}$, $W=\varnothing$.
\end{theorem}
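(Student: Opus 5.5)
Part (2) comes first, since it is independent of the automaton construction. Any position of an FA-line game has at most $m_{\max}$ options. Its SG value is the $\mex$ of a set of at most $m_{\max}$ natural numbers, and such a $\mex$ is at most $m_{\max}$. We also have to cover moves that land in the seed. There the value $\e_g$ prescribed by $\gamma^-$ has $g\le m_{\max}$ by definition of $\varGamma_{\varG}$, so every option value entering the $\mex$ is a natural number and the bound still holds. Hence no position has SG value $g>m_{\max}$, and $W=\varnothing$.

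For part (1) I will repeat the construction in the proof of Theorem~\ref{line_dfa}, replacing outcomes with SG values. The DFA reads the character part from right to left. Its state set is the finite set $\varGamma_{\varG}$, which has at most $(|\varTheta|(m_{\max}+1)+1)^{s_{\max}}$ elements; a coarser count is $(m_{\max}+2)^{|\varTheta| s_{\max}}$. The initial state is $\gamma^-$. The invariant is that after reading a suffix $u$, the current state records, for each label $\theta$ and each $i\le s_{\max}$, either $\x$ (if the $i$-th cell of $u\gamma^-$ is an obstacle) or $\e_g$, where $g$ is the SG value of the position whose token has label $\theta$ and sits on that cell. This holds because cells farther than $s_{\max}$ to the right can never be reached in one move from any cell to the left of $u$.

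The transitions are defined as follows. Reading $\x$ shifts in a column of $\x$ and drops the rightmost column. Reading $\e$ shifts in the column $\bigl(\e_{\varG(\theta\gamma)}\bigr)_{\theta\in\varTheta}$ and drops the rightmost column. Here $\varG(\theta\gamma)$ is the $\mex$ of the values $\gamma_s(\theta')$ over all legal moves $(\theta,\bar\gamma_1\ldots\bar\gamma_{s-1}\e)\mapsto\theta'$ whose landing cell is not $\x$. This value depends only on the current state, and by the bound from part (2) it lies in $\{0,\ldots,m_{\max}\}$, so the new column is a valid column of an SG seed. The transition is therefore well defined on $\varGamma_{\varG}$.

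A state $\gamma$ is final when $\varG(\theta_0\gamma)=g$, computed by the same $\mex$ formula. Correctness then follows by induction on the length of the word read, using the invariant and Definition~\ref{def:SG}.

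The main obstacle is checking that the column computed on reading $\e$ really is the SG value of the corresponding position with the longer board. This is a consistency check: the $\mex$ over options is taken exactly over the moves the game allows, and moves into the seed are handled like any other option, whose values are the recorded $\e_g$. Once the invariant is established, we have a right-to-left DFA recognising $W$ reversed. Regular languages are closed under reversal, via reversing the transitions to get an NFA and then applying Theorem~\ref{reg_determ}, so $W$ itself is regular.
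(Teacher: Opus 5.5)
Your proposal is correct and follows the paper's route: part (2) is the same $\mex$ bound over at most $m_{\max}$ options (moves into the seed contribute values $\le m_{\max}$), and part (1) is the seed-shifting, right-to-left automaton of Theorem~\ref{line_dfa} with the outcome entries replaced by $\e_0,\ldots,\e_{m_{\max}}$, followed by reversal and determinisation. One harmless slip: your first count $(|\varTheta|(m_{\max}+1)+1)^{s_{\max}}$ is not an upper bound in general, since $|\varGamma_{\varG}| = \bigl((m_{\max}+1)^{|\varTheta|}+1\bigr)^{s_{\max}}$ exactly (e.g.\ $|\varTheta|=2$, $m_{\max}=2$ gives $10^{s_{\max}}$ against your $7^{s_{\max}}$); your coarser bound $(m_{\max}+2)^{|\varTheta| s_{\max}}$ is valid, and only finiteness is used anyway.
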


Since positions are themselves words, we can naturally define a right action of $\varSigma^*$ on all positions of an FA-line game as concatenation of words (keeping the seed and label unchanged):
\[
    (\theta w \gamma^-) \cdot c \coloneq (\theta wc \gamma^-),\quad \theta\in\varTheta,\ w\in\varSigma^*.
\]
Then we can define a generalized misère quotient under this right action.
Unlike before, since the SG values are bounded, we can also define an equivalence relation for SG values:
for any positions $u_1, u_2\in\varU$,
\[
    u_1 \cong^{\varG} u_2 \\iff \forall v\in\varSigma^*,\ \varG(u_1\cdot v) = \varG(u_2\cdot v),
\]
which yields the \textit{generalized misère quotient for SG values}
\[
    \overline{\varU} \coloneq \varU / \cong^{\varG}.
\]

\begin{proposition}\label{line_finite_misquot}
    For any FA-line game (under normal or misère play), the generalized misère quotient and the generalized misère quotient for SG values defined above are both finite.
\end{proposition}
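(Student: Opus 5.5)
Since $\varSigma^*$ acts on positions by appending characters just before the seed, the claim is a Myhill--Nerode-type statement, and the plan is to bound the number of classes by the state count of a single deterministic automaton.

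\emph{Step 1: the seed DFA.} Fix a seed $\gamma$ (ordinary or SG-valued). Theorems~\ref{line_dfa} and~\ref{line_sg_dfa} construct a right-to-left DFA on the finite state set $\varGamma$ (resp.\ $\varGamma_{\varG}$). After reading the characters between the token and the seed, its state is a seed $\gamma'$ that records, for every label $\theta\in\varTheta$ and every one of the $s_{\max}$ nearest cells, whether the cell is an obstacle and what the outcome (resp.\ SG value) is when the token lands there with label $\theta$. The outcome or SG value of the full position is then a fixed function of $\gamma'$ and the current label.

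\emph{Step 2: reduce to a left-to-right automaton.} Since $\cdot$ appends on the right, the equivalence concerns extensions $u\cdot v$ of a fixed prefix. I therefore want a deterministic automaton read \emph{left to right} whose state after reading $\theta w$ determines, for every $v$, the value at $\theta w v\gamma$. Define $f_{\theta w}\colon \varGamma\to\{\varP,\varN\}$ (resp.\ $\to\{0,\ldots,m_{\max}\}$) by letting $f_{\theta w}(\gamma')$ be the outcome (resp.\ SG value) of the position with label $\theta$, characters $w$, and seed $\gamma'$ immediately following. Because the seed-DFA state after reading $v\gamma$ from the right is exactly some $\gamma'\in\varGamma$, we have $\varG(\theta wv\gamma)=f_{\theta w}(\gamma\cdot v^{\mathrm{rev}})$, and similarly for outcomes. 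Hence $f_{\theta w}=f_{\theta' w'}$ implies $\theta w\gamma\cong^{\varG}\theta'w'\gamma$ (resp.\ $\cong$). There are only finitely many such functions, at most $(m_{\max}+1)^{|\varGamma_{\varG}|}\cdot|\varTheta|$, so the quotients are finite. Equivalently, the language $W$ of Theorem~\ref{line_dfa}, read left to right, is regular (reverse and determinise), and the index of $\cong$ is bounded by its minimal DFA size via Lemma~\ref{build_fa}.

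\emph{Step 3: misère and SG variants.} These go through verbatim, replacing $o$ by $o^-$ or $\varG$. The remark after Theorem~\ref{line_dfa} already supplies the misère seed DFA. The SG codomain is finite by the bound $\varG(u)\le m_{\max}$, and the case $g>m_{\max}$ is empty by Theorem~\ref{line_sg_dfa}(2).

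\emph{Main obstacle.} The delicate point is the direction mismatch. The seed DFAs read from the right, while the quotient is defined by right concatenation, i.e.\ by what is appended next to the seed. I handle this with the function-space argument of Step 2, or equivalently by noting that regularity is preserved under reversal. The one thing to verify carefully is that the seed-DFA state after reading $v$ captures \emph{everything} relevant about $v\gamma$ for play starting to its left. This holds because moves have length at most $s_{\max}$, so play to the left of $v$ only ever looks at the nearest $s_{\max}$ cells and their landing values, which is exactly the content of a seed.
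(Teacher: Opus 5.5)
Your proposal is correct and is essentially the paper's argument. The paper likewise derives finiteness from the automata of Theorems~\ref{line_dfa} and~\ref{line_sg_dfa}, identifying quotient classes with states of the resulting outcome or SG-value DFA, which is a Myhill--Nerode count as in Lemma~\ref{build_fa}; your bound via the finitely many functions $f_{\theta w}\colon\varGamma\to\{\varP,\varN\}$ is an explicit form of ``reverse and determinise'' and usefully spells out the right-to-left versus left-to-right mismatch that the paper's one-line proof passes over (the extra factor $|\varTheta|$ in your count is unnecessary but harmless).
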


\begin{proof}
    Analogous to Proposition~\ref{finite_misquot}, it suffices to note that the final states of the outcome DFA of an FA-line game are in bijection with the equivalence classes of the generalized misère quotient;
    similarly, the final states of its SG-value DFA correspond to the equivalence classes of the generalized misère quotient for SG values.
\end{proof}

\begin{proposition}
    For any FA-line game (under normal or misère play), the outcome or SG value of the empty board is periodic.
\end{proposition}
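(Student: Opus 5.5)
The plan is to reuse the right-to-left DFA from the proof of Theorem~\ref{line_dfa} (and its SG-value analogue, Theorem~\ref{line_sg_dfa}) and apply a pigeonhole argument to the unary input $\e^n$. The empty board of length $n$ has the form $\theta_0\e^{n-1}\gamma$, where the seed $\gamma$ is fixed (for the plain game, the all-$\x$ seed). Its outcome, or SG value, is obtained by starting from the state $\gamma$, reading $n-1$ copies of $\e$ from right to left, and then evaluating the final column rule $o(\theta_0\gamma')$ on the resulting state $\gamma'$.

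In the proof of Theorem~\ref{line_dfa}, $\e$ acts on $\varGamma$ as a fixed self-map $\phi_\e:\varGamma\to\varGamma$. Iterating it gives a sequence
\[
    \gamma,\ \phi_\e(\gamma),\ \phi_\e^2(\gamma),\ \ldots
\]
in a finite set of size $(2^{|\varTheta|}+1)^{s_{\max}}$. Hence there are $s\ge 0$ and $t\ge 1$ with $\phi_\e^{s+t}(\gamma)=\phi_\e^{s}(\gamma)$. Because each term depends only on the previous one, the sequence satisfies $\phi_\e^{k+t}(\gamma)=\phi_\e^k(\gamma)$ for all $k\ge s$. The outcome of the length-$n$ empty board is a fixed function of $\phi_\e^{n-1}(\gamma)$, namely the final-state test. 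So it is ultimately periodic in $n$ with preperiod at most $s+1$ and period $t$.

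For misère play and for SG values I would repeat the argument verbatim. The only changes are that $o$ is replaced by $o^-$ or $\varG$, and $\varGamma$ by the finite set $\varGamma_{\varG}$. Finiteness of $\varGamma_{\varG}$ comes from the bound $\varG\le m_{\max}$, and the transition $\e\cdot\gamma$ is still computed from $\gamma$ alone. Alternatively, for a left-to-right view, one could note that the states $q_0\cdot\e^n$ of any finite outcome DFA are eventually periodic, exactly as in Proposition~\ref{normal_per}.

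The one point needing care is the reading of ``periodic''. The argument gives \emph{ultimate} periodicity, and I would state the preperiod explicitly. Pure periodicity would additionally require $\phi_\e$ to be a permutation on the orbit, which I do not expect in general. I would also check that the seed for the actual Kotzig line game is independent of $n$, so that a single orbit governs all board lengths.
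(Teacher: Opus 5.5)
Your argument is correct and is essentially the paper's proof. Both apply the pigeonhole principle to the iterates of the single letter $\e$ in a finite-state automaton: the paper uses the left-to-right states $q_0\cdot\e^n$ (as in Proposition~\ref{normal_per}), while you iterate the seed map on $\varGamma$ from the proof of Theorem~\ref{line_dfa}, and for unary input these are the same. Your version avoids the reversal and determinisation step and gives explicit bounds on the preperiod and period in terms of $|\varGamma|$. You are also right that only ultimate periodicity is obtained, which is all the paper's proof delivers as well.
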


\begin{proof}
    The proof is similar in principle to~\ref{normal_per} (but simpler): it suffices to note that the finiteness of the state set of the outcome DFA makes the sequence of states
    \[
        q_0\cdot\epsilon,\ q_0\cdot\e,\ q_0\cdot\e\e,\ q_0\cdot\e\e\e,\ \ldots,\ q_0\cdot\e^n,\ \ldots
    \]
    in the outcome DFA ultimately periodic, which yields periodic outcomes. The same argument applies to the SG-value DFA.
\end{proof}

\subsection{Kotzig's Nim}

Similar to line games, we can represent a position of Kotzig's nim of board length $n+1$ by a word of the form
\[
    w = \m c_1 c_2 \ldots c_{n-1},\quad c_i \in \varSigma.
\]
When the move set is $S$, a legal move consists of moving the token $\m$ to the right by $s\in S$ cells, with the destination cell required to be empty $\e$; after the move, the cell originally occupied by the token becomes an obstacle $\x$.

When the board length $n > s_{\max}$, a legal move has the form
\[
     \m c_1 c_2 \ldots c_{s-1} \e c_{s+1} \ldots  c_{n-1}  \mapsto \x c_1 c_2 \ldots c_{s-1}\m c_{s+1} \ldots c_{n-1}.
\]
Since the board is circular, we cyclically rotate all characters to the left of $\m$ to the right end, so that $\m$ is always the first character:
\[
    \x c_1 c_2 \ldots c_{s-1}\m c_{s+1} \ldots c_{n-1} \equiv \m c_{s+1} \ldots c_{n-1}\x c_1 c_2 \ldots c_{s-1}.
\]

However, when $n\leq s_{\max}$ (the board length does not exceed the maximum step size), some difficult cases arise; hence in the following we will only discuss the sufficiently long case $n> s_{\max}$.
We denote the set of all such sufficiently long positions by $\varU^>$ (with the token as the first character by convention), and the sets of $\varP$-positions, $\varN$-positions and terminal positions within it by $\varP^>, \varN^>, \varT^>$, respectively.

It is easy to see that $\varU^>$ is closed under the right action of $\varSigma^*$, so the (SG-value) generalized misère quotient we defined can be restricted to $\varU^>$, yielding $\overline{\varU^>}$ and its subsets $\overline{\varP^>}, \overline{\varN^>}$.

We can generalise Kotzig's nim to \textit{FA-Kotzig's nim} in a manner similar to the previous section; the only point requiring discussion is the placement of the seed.
For a linear board, it suffices to place the seed at the end of the board.
For a circular board such as Kotzig's nim, we need to introduce an additional parameter, the \textit{number of laps} $r$:
the token may circle the board at most $r-1$ times, after which the seed appears in front of the starting position, and the token enters the seed when it attempts to cross the starting position to complete the $r$-th lap.
As illustrated below:

\begin{center}
    \begin{tikzpicture}[scale=0.7,rotate=90]

        \filldraw[draw=black,fill=gray!20] (2.5, 0)--(2.5, 6)--(1.5, 6)--(1.5, 0);
        \node at(2.1,0.6) {$\gamma_1$};\draw (2.5,1.2)--(1.5,1.2);\node at(2.1,1.8) {$\gamma_2$};\draw (2.5,2.4)--(1.5,2.4);\node at(2.1,3.0) {$\ldots$};
        \node at(2.1,5.4) {$\gamma_{s_{\max}}$};\draw (2.5,4.8)--(1.5,4.8);

        \foreach \deg/\i in {0/$\mathbf{T}$,(1*36)/$c_1$,(2*36)/$c_2$,(3*36)/$c_3$,(4*36)/$c_4$} {
            \filldraw[draw=black,fill=white] ({cos(\deg)*1.5}, {sin(\deg)*1.5}) -- ({cos(\deg)*2.5}, {sin(\deg)*2.5}) arc (\deg:\deg+36:2.5) -- ({cos(\deg+36)*1.5}, {sin(\deg+36)*1.5}) arc (\deg+36:\deg:1.5);
            \node at ({cos(\deg+18)*2}, {sin(\deg+18)*2}) {\i};
        }

        \draw (2.5, 0)--(2.5, -3.5);\draw (1.5, 0)--(1.5, -3.5);
        \node at(2,-0.6) {$c_{n\!-\!1}$};\draw (2.5,-1.2)--(1.5,-1.2);\node at(2,-1.8) {$\ldots$};

        \draw (-1.5, 0)--(-1.5, -3.5);\draw (-2.5, 0)--(-2.5, -3.5);
        \node at(-2,-0.6) {$c_5$};\draw (-2.5,-1.2)--(-1.5,-1.2);\node at(-2,-1.8) {$\ldots$};

        \draw[dashed] (2.5, 0)--(4.5,0);\node at (4.5,0) [above] {starting point};
        \draw[->] (4,-1.5)--(4,0);\draw[->] (4,0)--(4,1.5);\node at (4,1.5) [left] {completed $r-1$ laps};
        \draw[->] (4,0) arc (0:36:2.5); \node at (3.2,1.5) [left] {not yet completed};

        \draw ({cos(180)*2.5}, {sin(180)*2.5-3.5}) arc (180:360:2.5);
        \draw ({cos(180)*1.5}, {sin(180)*1.5-3.5}) arc (180:360:1.5);

    \end{tikzpicture}
\end{center}

We now extend the conclusions of the previous section from FA-line games (which correspond to FA-Kotzig's nim with zero laps) to FA-Kotzig's nim. This requires two important facts:
first, even without a lap restriction, the token can make only finitely many laps around the board; second, if all FA-Kotzig's nim games with $r$ laps have finite misère quotients,
then for any FA-Kotzig's nim with $r+1$ laps, we can construct an equivalent FA-line game, thereby showing that it also has a finite misère quotient.

The former is quite intuitive:
\begin{proposition}\label{cyc_finite}
    For FA-Kotzig's nim with maximum step size $s_{\max}$, the token can make at most $s_{\max}-1$ full laps on any board of length $n > s_{\max}$ (i.e., it cannot complete $s_{\max}$ full laps).
\end{proposition}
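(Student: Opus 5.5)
We argue by a pigeonhole/counting argument on the cells vacated by the token. On a circular board of length $n>s_{\max}$, each move advances the token clockwise by some $s\in S$ with $1\le s\le s_{\max}$. Once the token has left a cell, that cell becomes an obstacle $\x$ and is never visited again. Fix the starting cell as the origin and measure the total clockwise distance $D$ travelled. The token has completed $k$ full laps exactly when $D\ge kn$.

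\textbf{Step 1 (one landing per lap of a window).} We claim that in every lap after the first, the token must land in each stretch of $s_{\max}$ consecutive cells it passes over. A single move cannot jump over $s_{\max}$ or more cells without landing. So for any arc $A$ of $s_{\max}$ consecutive cells, each time the token's cumulative position sweeps past $A$, it lands on at least one cell of $A$.

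\textbf{Step 2 (counting on a fixed arc).} Fix any arc $A$ of exactly $s_{\max}$ consecutive cells. This is possible since $n>s_{\max}$, and we may choose $A$ to start just after the starting cell so that the starting cell itself is not in $A$. During each complete lap the token sweeps past $A$ once, so by Step 1 it lands on some cell of $A$. Landings must be on distinct cells, because visited cells become obstacles. Hence $A$ can absorb at most $s_{\max}$ landings, i.e.\ at most $s_{\max}$ sweeps. To reach $s_{\max}$ full laps, however, the token must sweep past $A$ in each of those laps and also be able to continue or return across the start. Choosing $A$ to be the $s_{\max}$ cells immediately preceding the starting cell sharpens this: the starting cell is already an obstacle from move one, so the last sweep that would close the $s_{\max}$-th lap needs a landing in $A$ that is no longer available. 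Concretely, completing lap $k$ requires crossing $A$ for the $k$-th time. Each crossing uses a fresh cell of $A$, and the crossing which closes lap $s_{\max}$ would need $s_{\max}$ fresh cells in $A$ together with a landing at or beyond the start. We will show this forces a contradiction.

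\textbf{Main obstacle.} The hard part is the off-by-one bookkeeping: pinning down exactly which arc to use and showing that $s_{\max}$ crossings of $A$ are impossible rather than merely that $s_{\max}+1$ are. The plan is to take $A$ as the last $s_{\max}$ cells before the start, i.e.\ $c_{n-s_{\max}},\dots,c_{n-1}$. Any move that passes the starting point from a position in or before $A$ has length at most $s_{\max}$, so it must originate in $A$ and land strictly past the start, since the start is an obstacle. Thus each completed lap consumes a distinct cell of $A$ as a launch point. The $s_{\max}$-th completion would use all $s_{\max}$ cells of $A$, including $c_{n-s_{\max}}$, from which the only lap-completing move is exactly $s_{\max}$ steps onto the start itself, which is illegal. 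This gives at most $s_{\max}-1$ full laps. If the seed version with laps is in play, the argument is unchanged because the seed only ends the game earlier.
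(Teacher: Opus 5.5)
Your final paragraph is a correct proof, but it takes a different route from the paper.

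The paper counts globally. Each cell is visited at most once, so there are at most $n-1$ moves, each of length at most $s_{\max}$. The total distance is therefore at most $(n-1)s_{\max}<ns_{\max}$, and the token cannot complete $s_{\max}$ laps.

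You count locally at the starting point. A move that crosses the start has length at most $s_{\max}$ and cannot land on the start, since the start is an obstacle once vacated. So it must be launched from one of the $s_{\max}-1$ cells $c_{n-s_{\max}+1},\dots,c_{n-1}$. From $c_{n-s_{\max}}$ the only candidate move lands exactly on the start. Since the token stands on each cell at most once, there are at most $s_{\max}-1$ crossings.

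The paper's version is a one-line estimate, but its sharpness depends on evaluating $\lfloor (n-1)s_{\max}/n\rfloor$. For $n>s_{\max}$ this equals $s_{\max}-\lceil s_{\max}/n\rceil=s_{\max}-1$. The paper's displayed computation actually slips and ends at $s_{\max}$. Your version:
\begin{itemize}
\item reaches $s_{\max}-1$ directly;
\item explains where the $-1$ comes from;
\item gives the finer bound \emph{number of initially empty cells among $c_{n-s_{\max}+1},\dots,c_{n-1}$}.
\end{itemize}

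Your Steps~1 and~2 are unnecessary. The sweeping argument over an arbitrary $s_{\max}$-cell arc only yields the weaker bound $s_{\max}$, as you note yourself. The pigeonhole over the $s_{\max}$-cell arc $A$ can be replaced by stating at once that all launch cells lie in the $(s_{\max}-1)$-cell window just before the start.
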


\begin{proof}
    Since each cell can be visited at most once, the token can make at most $n-1$ moves; each move has maximum length $s_{\max}$, so the total distance travelled is at most $(n-1)s_{\max}$.
    When $n> s_{\max}$, it can travel at most
    \[
        \left\lfloor \frac{(n-1)s_{\max}}{n} \right\rfloor = s_{\max}-\left\lfloor \frac{s_{\max}}{n} \right\rfloor = s_{\max}
    \]
    full laps.
\end{proof}

For the latter, we construct an equivalent FA-line game according to the following principle:
use an FA-line game to simulate the first lap, during which the token records, using its label, all characters of the cells it has passed (including the obstacles it leaves behind);
when the token approaches the initial position, the recorded characters are ``unrolled'' to the right of the initial position to simulate the remaining $r$ laps of the token.
It suffices to take the outcomes of all positions obtained by such ``unrolling'' as the seed of the FA-line game; then the token need not continue moving to yield the same outcome.
The recorded sequence of characters has infinitely many possible arrangements, but as long as the misère quotient is finite, only its equivalence class needs to be recorded to achieve the same effect, hence only finitely many labels are needed.

\begin{proposition}\label{cyc_reduce}
Fix a label set $\Theta$, a move mapping $\Delta$, and a seed $\gamma$.
Suppose that for every initial label $\theta_0 \in \Theta$, FA-Kotzig's nim with $r$ laps has an outcome DFA (or SG-value DFA)
\[
\mathcal{A}_r^{\theta_0} = \langle Q_r, \Sigma, \delta_r, q_0^{\theta_0}, F_r \rangle,
\]
then FA-Kotzig's nim with $r+1$ laps also has an outcome DFA (or SG-value DFA).
\end{proposition}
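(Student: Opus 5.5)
Following the construction sketched just before the statement, I will simulate the first lap of an $(r+1)$-lap game by an FA-line game whose labels carry, besides the game label, the state of a DFA that has been reading the cells the token has left behind. Let $w=c_1\ldots c_{n-1}$ be the board. During the first lap the token moves through $w$ from left to right. When the token lands on cell $i$, the region behind it (cells $0,\ldots,i-1$) has become a word over $\varSigma$. The starting cell and every cell the token has landed on are now obstacles $\x$, and every skipped cell keeps its symbol. When the token later crosses the starting point into lap two, the position it sees is exactly $\theta' \cdot u$ with $u$ this recorded word. The game from there on is an $r$-lap FA-Kotzig game whose board is $u$ followed by the unvisited tail. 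The obstacle is that $u$ is unbounded. I handle it by recording only its image under $\mathcal{A}_r$, since by hypothesis and Proposition~\ref{line_finite_misquot} only finitely many equivalence classes matter.

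\textbf{Step 1: the new label set.} Take $\Theta'=\Theta\times\prod_{\theta_0\in\Theta}Q_r$. Here the second component records, for each possible future label $\theta_0$, the state $q_0^{\theta_0}\cdot(\text{recorded prefix})$ of the $r$-lap DFA. The product is needed because the label at the moment of crossing is not known in advance.

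\textbf{Step 2: the move mapping $\Delta'$.} Define it so that a move $(\theta, v\e)\mapsto\theta''$ of $\Delta$ becomes $((\theta,\vec q), v\e)\mapsto(\theta'',\vec q\cdot \x v)$. The leading $\x$ accounts for the cell just vacated, which turns into an obstacle, and $v$ is the skipped cells, read unchanged. Each coordinate updates by its own $\delta_r$. Since $|v\e|\le s_{\max}$, $\Delta'$ is finite with the same maximal step size.

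\textbf{Step 3: the new seed $\gamma'$.} This is the delicate step. When the token tries to jump past the right end of $w$ into the $i$-th of the last $s_{\max}$ virtual cells, the real game has the token crossing the start point. The board beyond that point is the recorded prefix (with the current cell now $\x$), followed by the still-untouched first cells $c$ of the original board. Those first cells have, however, already been read into $\vec q$. So I instead take the seed to be the $r$-lap outcome read off from the DFA state, i.e. whether $\delta_r(q^{\theta''},\x v)\in F_r$, with the appropriate rotation.

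The main difficulty is that this outcome depends on the whole board after the crossing, which the DFA state captures only if the crossing point is the start of the word the $r$-lap DFA reads. I would fix this by cutting the circle at the start so that after lap one the residual board is literally the recorded word. The $r$-lap game then sees $\theta''$ followed by the recorded word, and its outcome is $[\delta_r(\vec q_{\theta''},\text{last symbols})\in F_r]$. A seed, though, must be a fixed function of $(i,\text{label})$, and the label now contains $\vec q$. So I check that this dependence is legitimate: $\gamma_i$ is allowed to depend on the label, and the label includes $\vec q$. Obstacle-consistency across labels holds because $\bar\gamma$ is just the original seed's obstacle pattern.

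\textbf{Step 4: conclusion.} With $\Delta'$ and $\gamma'$ defined, the FA-line game started at label $(\theta_0, (q_0^{\theta})_\theta)$ has the same outcome (or SG value) as the $(r+1)$-lap game on every board, which I show by induction on the number of moves. Theorem~\ref{line_dfa} (respectively Theorem~\ref{line_sg_dfa}) then gives an outcome DFA (respectively an SG-value DFA), completing the induction on $r$. Combined with Proposition~\ref{cyc_finite}, this covers unrestricted Kotzig's nim.
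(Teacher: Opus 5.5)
Your route (simulate lap one by an FA-line game whose labels carry the $\mathcal{A}_r$-states of the recorded word, then apply Theorem~\ref{line_dfa}) follows the informal sketch before the proposition, but Step~3, which carries all the content, does not work. Write $u=u_0u_1\cdots u_{p-1}$ for the recorded cells $0,\dots,p-1$ when the token stands on cell $p$ near the end of lap one; here $u_0=\x$ is the start cell. A wrapping move of length $s$ lands on cell $j=p+s-n\ge 1$ of lap two, i.e.\ on virtual cell $i=j+1$. In the paper's rotated notation the resulting position is
\[
\theta''\,u_{j+1}\cdots u_{p-1}\,\x\,c_{p+1}\cdots c_{n-1}\,u_0\cdots u_{j-1}.
\]
This is not ``$\theta''$ followed by the recorded word'': the token has already jumped over $u_0\cdots u_{j-1}$. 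So it is not a starting position of an $r$-lap game, and the hypothesis, which supplies $\mathcal{A}_r^{\theta_0}$ only for starting positions, does not decide it. Cutting the circle at the start changes nothing here.

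There is a second problem with the seed itself. The virtual cells after $c_{n-1}$ stand for lap-two cells $0,1,\dots,s_{\max}-1$, whose contents $\x,u_1,\dots,u_{s_{\max}-1}$ depend on $w$ and on the lap-one history, not on the original seed $\gamma$. So $\bar\gamma'$ is not ``the original seed's obstacle pattern''. Legality of the landing depends on $u_j$, which $\vec q$ does not record and which a label-independent obstacle pattern cannot express. Both $\Delta$ and your update $\vec q\cdot\x v$ read the fixed virtual symbols instead of the true crossed cells $u_0\cdots u_{j-1}$. The update also re-reads cells already in $u$, and from the first move on it feeds the start cell $u_0$ to $\mathcal{A}_r$, which expects only the cells after the token.

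The paper's proof builds $\mathcal{A}_{r+1}$ directly on $\Theta\times\Sigma_\sigma$ rather than through an FA-line game. Its signature also stores a block $\operatorname{suffix}_{s_{\max}-1}(w)$, which by the paper's definition of $\operatorname{suffix}_m$ is the \emph{first} $s_{\max}-1$ symbols, i.e.\ exactly the lap-two cells a wrapping move can reach; your labels lack this. However, it then declares $(\theta,\sigma(w))$ final iff $\delta_r(q_0^{\theta},w)\in F_r$. That is the same unjustified identification of the post-wrap position with an $r$-lap starting position, so following it will not close the gap. Your FA-line reduction of lap one is actually the sounder part, since Theorem~\ref{line_dfa} accounts for the players' choices, whereas the paper's transitions follow a single label.

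The missing idea is this: the displayed position coincides, remaining lap budget included, with the position reached from the $r$-lap starting position on the lap-two board $B=u_1\cdots u_{p-1}\,\x\,c_{p+1}\cdots c_{n-1}$ by a first move of length $j$ into label $\theta''$. So the inductive statement must decide such positions. Either strengthen it to positions where the token stands on a cell $j<s_{\max}$ reached by one move from the start, or, for outcomes only, add auxiliary initial labels forcing a first move of length exactly $j$ into $\theta''$. The second option needs the hypothesis for all label sets and move mappings, not just the fixed $\Delta$, and fails for SG values because a single forced option fixes the outcome but not the SG value.

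Then do not encode the wrap-around in an FA-line seed: faking an illegal landing by an immediately losing move works only for normal-play outcomes, not under misère play or for SG values. Instead, rerun the right-to-left construction from the proof of Theorem~\ref{line_dfa} with rows indexed by pairs (label, history state). The history state is the tracked automaton states on $u_1u_2\cdots$ together with $u_1\cdots u_{s_{\max}-1}$, and the wrap-around entries are then computed from this data.
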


\begin{proof}
For any string $w \in \Sigma^*$ of visited cells, define its \textit{signature} as
\[
\sigma(w) \coloneqq \left( \operatorname{suffix}_{s_{\max}-1}(w),\;
\left( \delta_r(q_0^{\theta'}, w) \right)_{\theta' \in \Theta} \right),
\]
where $\delta_r(q_0^{\theta'}, w)$ is the state reached by the DFA $\mathcal{A}_r^{\theta'}$ after reading $w$ from its initial state $q_0^{\theta'}$.

Since $Q_r$ and $\Theta$ are both finite, the set
\[
\Sigma_{\sigma} \coloneqq \{ \sigma(w) \mid w \in \Sigma^* \}
\]
is finite.

We now construct a DFA $\mathcal{A}_{r+1}$ for the $(r+1)$-lap game. Its state set is
\[
Q_{r+1} \coloneqq \Theta \times \Sigma_{\sigma}.
\]
Its initial state is
\[
q_0^{r+1} \coloneqq (\theta_0, \sigma(\epsilon)).
\]

For each state $(\theta, \sigma(w)) \in Q_{r+1}$ and each symbol $c \in \Sigma$, the transition is defined as follows. First, let $\theta'$ be the label obtained from $\theta$ after a move that crosses character $c$ according to the move mapping $\Delta$; if no such move is legal, then $\theta'$ is undefined and the transition goes to a sink state. Otherwise, define
\[
\delta_{r+1}((\theta, \sigma(w)), c) \coloneqq (\theta', \sigma(wc)).
\]
The signature $\sigma(wc)$ is computed explicitly as
\[
\sigma(wc) = \left( \operatorname{suffix}_{s_{\max}-1}(wc),\;
\left( \delta_r(\delta_r(q_0^{\theta'}, w), c) \right)_{\theta' \in \Theta} \right),
\]
using the transition function $\delta_r$ of the $r$-lap DFAs.

It remains to specify the set of final states $F_{r+1} \subseteq Q_{r+1}$. For a state $(\theta, \sigma(w))$, this state corresponds to a position in which the token (carrying label $\theta$) is about to enter the seed region, with $w$ being the string of cells already visited. The outcome of such a position in the $(r+1)$-lap game is determined as follows: when the token enters the seed, the game effectively continues in the $r$-lap game from the unfolded position represented by $w$. By the inductive hypothesis, this outcome is precisely given by the state $\delta_r(q_0^{\theta}, w)$ in $\mathcal{A}_r^{\theta}$. Hence we define
\[
(\theta, \sigma(w)) \in F_{r+1}
\iff
\delta_r(q_0^{\theta}, w) \in F_r.
\]

The correctness of this construction follows by induction on the number of characters read: after reading any prefix $v \in \Sigma^*$, the DFA $\mathcal{A}_{r+1}$ is in state $(\theta_v, \sigma(w_v))$, where $\theta_v$ is the current label and $w_v$ is the string of cells visited so far. The signature $\sigma(w_v)$ exactly records all information needed to determine, for any possible continuation, the outcome of the game once the token completes its first lap and enters the $r$-lap phase. Therefore, for every position $(\theta_0, w\gamma)$ of board length $n > s_{\max}$, we have
\[
(\theta_0, w\gamma) \in \mathcal{N}
\iff
\delta_{r+1}(q_0^{r+1}, w) \in F_{r+1}.
\]

Thus $\mathcal{A}_{r+1}$ is an outcome DFA for the $(r+1)$-lap game.
Since the remaining positions of length $n \le s_{\max}$ are finite in number, they can be handled separately by the finiteness of regular languages (Theorem 2.3), and the union with the above DFA is again regular by Theorem 2.4. The proof is complete.
\end{proof}

Thus we resolve the conjecture about this game from GONC~\cite{GONC}, in a form that is even far more general than the original conjecture:

\begin{theorem}
    Every FA-Kotzig's nim (under both normal and misère play) has an outcome DFA (or SG-value DFA).
    This includes all impartial and partizan classical Kotzig's nim games.
\end{theorem}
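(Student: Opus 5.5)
The argument is an induction on the number of laps $r$. It combines Theorem~\ref{line_dfa} (with its SG-value analogue Theorem~\ref{line_sg_dfa}) as the base case, Proposition~\ref{cyc_reduce} as the inductive step, and Proposition~\ref{cyc_finite} to cap the number of laps that matter.

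Fix the label set $\varTheta$, the move mapping $\varDelta$ and the seed $\gamma$. For the base case $r=0$ (equivalently the first lap, in which the seed sits directly in front of the starting point), FA-Kotzig's nim with $0$ laps is exactly an FA-line game. So for every initial label $\theta_0$, Theorem~\ref{line_dfa} gives an outcome DFA, under normal play or under misère play (replacing $o$ by $o^-$ as remarked after that theorem), and Theorem~\ref{line_sg_dfa} gives the SG-value DFAs. These DFAs are read right to left, so I will first reverse and determinise them to obtain left-to-right DFAs $\varA_0^{\theta_0}$ of the form required in Proposition~\ref{cyc_reduce}. For the inductive step, since the hypothesis of Proposition~\ref{cyc_reduce} is stated uniformly over all initial labels, applying it $r$ times yields outcome (or SG-value) DFAs $\varA_r^{\theta_0}$ for every $r$ and every $\theta_0$.

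Next I remove the lap restriction. By Proposition~\ref{cyc_finite}, on any board with $n>s_{\max}$ the token cannot complete $s_{\max}$ laps. Hence the unrestricted game, in which the seed consists of obstacles or no seed is present, coincides on $\varU^>$ with the FA-Kotzig's nim game with $r=s_{\max}$ laps and an all-$\x$ seed: the seed is never entered by a legal play, so it imposes no artificial outcome. The DFA $\varA_{s_{\max}}^{\theta_0}$ therefore recognises the $\varN$-positions (respectively $\varP$-positions, or positions with a given SG value $g\le m_{\max}$) of $\varU^>$. For $g>m_{\max}$ the language is empty, by the bound $\varG(u)\le m_{\max}$. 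The finitely many positions with $n\le s_{\max}$ form a finite, hence regular, language by Theorem~\ref{reg_finite}. The union of the two languages is regular by Theorem~\ref{reg_closure}, and Theorem~\ref{reg_determ} turns it into a DFA.

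Finally, classical impartial Kotzig's nim with move set $S$ is the instance $\varTheta=\{\m\}$, and the partizan version $S_\rmL\col S_\rmR$ is the two-label instance written out for line games. Both are therefore covered.

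The main obstacle I expect is in the inductive step: checking that Proposition~\ref{cyc_reduce} really applies with seeds and misère or SG variants held uniform. Concretely, the $r$-lap DFAs must be available for every initial label, and the position reached when the token crosses the starting point must be an $r$-lap position whose word is exactly the recorded string $w$, with the last $s_{\max}-1$ cells tracked correctly so that moves straddling the starting point are legal. I would handle this by keeping the suffix component of the signature and checking that the seed columns of the $(r+1)$-lap game are defined from the $r$-lap outcomes of the unrolled positions. The base-case reversal and determinisation is routine.
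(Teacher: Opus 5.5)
Your proposal is correct and follows the paper's proof essentially step for step: the FA-line-game DFAs of Theorems~\ref{line_dfa} and~\ref{line_sg_dfa} as the base case, Proposition~\ref{cyc_reduce} as the inductive step (applied uniformly over initial labels), Proposition~\ref{cyc_finite} to cap the number of laps, and a finite, hence regular, language for boards with $n\le s_{\max}$, combined by union. The only difference is minor: you run the induction to $r=s_{\max}$ with an all-$\x$ seed rather than stopping at $r=s_{\max}-1$, which is harmless and avoids the ambiguity in the paper's lap-counting convention.
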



\begin{proof}[Proof of Theorem 4.3]
By Proposition~\ref{cyc_finite}, the token can make at most $s_{\max}-1$ full laps on any board of length $n > s_{\max}$.
We prove by induction on the number of laps $r$ that every $r$-lap FA-Kotzig's nim has an outcome DFA (or SG-value DFA).

For the base case $r = 0$, the game is precisely an FA-line game, which has an outcome DFA by Proposition~\ref{line_dfa}, and an SG-value DFA by Proposition~\ref{line_sg_dfa}.

For the inductive step, assume that every $r$-lap FA-Kotzig's nim (for all initial labels and seeds) has an outcome DFA.
Then by Proposition~\ref{cyc_reduce}, every $(r+1)$-lap FA-Kotzig's nim also has an outcome DFA.

Applying the induction up to $r = s_{\max}-1$, we conclude that every FA-Kotzig's nim with the original lap restriction (i.e., with no explicit lap bound, but with at most $s_{\max}-1$ laps possible) has an outcome DFA.

For positions of length $n \le s_{\max}$, there are only finitely many such positions up to the natural symmetries of the game; hence the sets of positions with each outcome are finite languages, and therefore regular by Theorem~\ref{reg_closure}.
Taking the union of the DFA for long positions and the finite automata for short positions gives a DFA for all positions.
The same argument applies under mis\`ere play and for SG-value DFAs.

Therefore every FA-Kotzig's nim, including all impartial and partizan classical Kotzig's nim games, has an outcome DFA (and, in the impartial case, SG-value DFAs) under both normal and mis\`ere play.
\end{proof}

\subsection{Algorithmic Principles}

\subsubsection{General Principles}

The algorithm for searching for outcome DFAs for Kotzig's nim is essentially the same as the algorithm for octal games described in the previous chapter; we only need to point out the differences:

(1)~For simplicity, we only consider positions with board length greater than $s_{\max}$ (outcomes of shorter positions are easy to compute). For this purpose, the initial state queue contains all states of length $s_{\max}$.

(2)~Legal positions no longer have the restriction that ``the first and last characters must be $\x$''; hence the words in the initial queue are no longer required to have $\x$ as the second character, and words corresponding to final states of the DFA are no longer required to end with $\x$.

More crucially, to prove the correctness of the resulting outcome DFA, the game rules must be expressed in terms of regular language operations (i.e., FA operations).
For impartial Kotzig's nim, $\nxt(\cdot)$ can be implemented by first deleting a prefix (using left quotient as defined in~\ref{reg_closure}), then adding prefixes and suffixes; $\prv(\cdot)$ is obtained by reversing the order of operations.
For a move set $S$, for any set of positions $\varW \subseteq \varTheta\varSigma^*$,
\begin{align*}
    \nxt(\varW) &= \bigcup_{s\in S} \bigcup_{v\in\varSigma^{s-1}} \rmR (\rmL v\e \setminus \varW) \x v;\\
    \prv(\varW) &= \bigcup_{s\in S} \bigcup_{v\in\varSigma^{s-1}} \rmR v\e (\rmL \setminus (\varW ~/~ \x v)).
\end{align*}
For partizan games, the above operations are performed for both labels while flipping the label, and then the union is taken. For Left and Right move sets $S_{\rmL}, S_{\rmR}$,
\begin{align*}
    \nxt(\varW) &= \left(\bigcup_{s\in S_{\rmL}} \bigcup_{v\in\varSigma^{s-1}} \rmR (\rmL v\e \setminus \varW) \x v\right) \\
            &~~~~ \cup \left(\bigcup_{s\in S_{\rmR}} \bigcup_{v\in\varSigma^{s-1}} \rmL (\rmR v\e \setminus \varW) \x v\right);\\
    \prv(\varW) &= \left(\bigcup_{s\in S_{\rmL}} \bigcup_{v\in\varSigma^{s-1}} \rmR v\e (\rmL \setminus (\varW ~/~ \x v))\right) \\
            &~~~~ \cup \left(\bigcup_{s\in S_{\rmR}} \bigcup_{v\in\varSigma^{s-1}} \rmL v\e (\rmL \setminus (\varW ~/~ \x v))\right).
\end{align*}
By Theorem~\ref{reg_closure}, all operations involved are closed for regular languages.

In addition, we need to determine the FA for the set of all terminal positions of the game.
When the board length $n\geq s_{\max}$, the game reaches a terminal position if and only if all next positions under legal step sizes are obstacles $\x$; this can be decided by inspecting the first $s_{\max}$ characters ahead of the token.
For a move set $S$, the set of all terminal positions is
\[
    \varT = \{\m c_1 c_2\ldots c_{n-1} w \mid w\in\varSigma^+,\ c_i
    \begin{cases}
        = \x, & i\in S; \\
        \in \varSigma, & i\notin S,
    \end{cases}\},
\]
while for the partizan version it is
\[
    \varT = \{\theta c_1 c_2\ldots c_{n-1} w \mid \theta\in\{\rmL, \rmR\},\ w\in\varSigma^+,\ c_i
    \begin{cases}
        = \x, & i\in S_{\theta}; \\
        \in \varSigma, & i\notin S_{\theta}.
    \end{cases}\}
\]
All of these are regular languages.

\subsubsection{Restriction to Quasi-Reachable Positions}

The proof in the previous section actually constructed outcome DFAs, but they contain far too many redundant states, and the number of states grows super-exponentially with $s_{\max}$.
Moreover, from the actual computational results, even without redundant states, the growth rate of the number of states is very fast.
However, we are most concerned with games starting from the empty board, which only requires discussing all \textit{reachable} positions from the empty board.
Therefore, we may ignore some unimportant positions and only use DFAs to determine the outcomes of a class of positions that covers all reachable positions.

We call a position $u\in\varTheta\varSigma^*$ \textit{quasi-reachable} if every run of consecutive $\e$'s following any $\x$ has length less than $s_{\max}$.
The set of all quasi-reachable positions can be expressed as
\[
    \varU_{\mathrm{qr}} \coloneq \varTheta \e^* \left( \bigcup_{0< s\leq s_{\max}} \x \e^{s-1} \right)^*.
\]

Clearly any non-quasi-reachable position is unreachable, because a non-quasi-reachable position either contains
\[
    \x \e^{s-1} \x,\quad s> s_{\max},
\]
or ends with
\[
    \x \e^{s-1},\quad s> s_{\max},
\]
both of which imply that the token has made at least one move of length $s>s_{\max}$, violating the game rules.
In fact, the quasi-reachability defined here corresponds to considering only cases that are unreachable due to exceeding the maximum step size.
For example, when $S=\{1, 3\}$, the positions
\[
    \m \e\e\e\e\e\e \x\e\e\x\underbrace{\e}_1\x,\qquad \m \e\e\e\e\e\e\x\underbrace{\e\e\e}_3 \x\x,
\]
are both unreachable, since the gaps of 1 and 3 empty cells respectively can only be produced by moves of lengths 2 and 4, neither of which is in $S$.
But the first position is quasi-reachable (2 is illegal but does not exceed the maximum step size), while the second is not.

Our algorithm for proving correctness of the outcome DFA still works under the quasi-reachability restriction, for the following two reasons:
First, if a position $u\in\varU$ is quasi-reachable, then all positions in $\nxt(u)$ are also quasi-reachable. This is easy to see from the definition of quasi-reachability and the game rules.
Second, from the expression we gave for $\varU_{\mathrm{qr}}$, it is clearly a regular language.
Likewise, the set obtained by removing the finitely many insufficiently long positions,
\[
    \varU_{\mathrm{qr}}^> \coloneq \varU_{\mathrm{qr}} \cap \varU^>,
\]
is of course also regular.

To construct outcome DFAs for quasi-reachable positions, we need to make the following modifications to the algorithm:

(1)~Clearly every prefix of a quasi-reachable position is quasi-reachable. Therefore, if a word to be added to the queue is not quasi-reachable, it need not be enqueued.

(2)~To shield the algorithm from the complexity of non-quasi-reachable positions, we modify the game rules: any non-quasi-reachable position has no legal moves.
Thus its outcome is uniformly $\varP$ (or $\varN$ under misère play).

(3)~The outcome DFA directly obtained in this way may still contain some non-quasi-reachable positions.
To filter these out, we must intersect it with the set of all quasi-reachable positions $\varU_{\mathrm{qr}}$; otherwise it would affect subsequent verification.

\subsubsection{Verification of SG-Value DFAs}

For impartial Kotzig's nim under normal play, we can also use the same algorithm to search for SG-value DFAs.
The SG value is meaningful because it gives the outcome of the disjunctive sum of two or more such games (equivalent to choosing one token on either of two boards and moving it).

In contrast, verifying the obtained SG-value DFA is more involved.
We need to rewrite the definition of SG values in Definition~\ref{def:SG} in set-theoretic form, analogously to Corollary~\ref{outcome_cond}:

\begin{corollary}\label{sg_cond}
    For any sets of positions
    \[
        \varW_{0}, \varW_{1}, \ldots, \varW_{m_{\max}}\subseteq \varU
    \]
    of an impartial game under normal play,
    they are exactly the sets of all positions with SG values $0, 1, \ldots, m_{\max}$ if and only if all of the following conditions hold:

    (1)~$\varT \subseteq \varW_{0}$.

    That is, under normal play, terminal positions always have SG value 0.

    (2)~For any SG value $g \leq m_{\max}$, $\nxt(\varW_{g}) \subseteq \bigcup_{g'\neq g} \varW_{g'}$.

    That is, the SG value of any successor cannot be equal to the original SG value.

    (3)~For any two SG values $g'<g \leq m_{\max}$, $\varW_{g} \subseteq \prv(\varW_{g'})$.

    That is, the SG values of successors can cover every smaller SG value.
\end{corollary}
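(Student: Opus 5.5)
We need to prove Corollary~\ref{sg_cond}: the family $\varW_0,\ldots,\varW_{m_{\max}}$ coincides with the SG level sets if and only if conditions (1)--(3) hold. Throughout, $\varW_g$ means the set of positions with SG value exactly $g$. Without disjointness of the $\varW_g$ the characterisation fails: taking every $\varW_g=\varU$ satisfies (1) and (3) trivially, and (2) trivially as well. So we read the statement, as in Corollary~\ref{outcome_cond}, with the implicit standing assumption that the $\varW_g$ are pairwise disjoint and cover $\varU$. In the application, the DFA produces a labelling of positions by values, so this assumption holds there.

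For the direction ``true level sets $\Rightarrow$ (1)--(3)'', we check each condition against Definition~\ref{def:SG}. Terminal positions have value $0$, which gives (1). If $u$ has value $g=\mex$ of its option values, then no option has value $g$; by Theorem~\ref{line_sg_dfa} every option value is at most $m_{\max}$, so every option lies in some $\varW_{g'}$ with $g'\neq g$, which gives (2). Every $g'<g$ lies in the option-value set, since $g$ is the least excluded value; hence $u\in\prv(\varW_{g'})$, which gives (3).

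For the converse, assume (1)--(3) and that the $\varW_g$ partition $\varU$. Let $\lambda(u)$ denote the unique $g$ with $u\in\varW_g$. We prove $\lambda(u)=\varG(u)$ by well-founded induction along the move relation, which is legitimate because the game is acyclic and every play terminates.

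\begin{itemize}
\item If $u\in\varT$, then (1) gives $\lambda(u)=0=\varG(u)$.
\item Otherwise, the induction hypothesis gives $\lambda(u')=\varG(u')$ for every option $u'\in\nxt(u)$, so $\varG(u)=\mex\{\lambda(u')\mid u'\in\nxt(u)\}$. Put $g=\lambda(u)$.
\item By (2), no option has $\lambda$-value $g$.
\item By (3), for each $g'<g$ some option lies in $\varW_{g'}$. Disjointness then forces that option's $\lambda$-value to equal $g'$.
\item Hence $g$ is absent from the option-value set while every smaller value is present, so the mex equals $g$.
\end{itemize}

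The main obstacle is exactly the point flagged at the start: the proof depends on the $\varW_g$ being disjoint and covering $\varU$. We would state this hypothesis explicitly (or note that it is automatic when the sets come from a single SG-value DFA), and remark that it can itself be checked by automaton operations via Theorem~\ref{reg_closure}.
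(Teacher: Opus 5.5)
Your converse induction and your check of the forward direction are correct. They are exactly the argument the paper leaves implicit: it gives no proof, treating the corollary as a set-theoretic restatement of Definition~\ref{def:SG}, just as Corollary~\ref{outcome_cond} restates Definition~\ref{def:outcome}. You are also right that the statement as written needs an extra hypothesis.

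However, your motivating counterexample is false, and it points at the wrong hypothesis. Taking every $\varW_g=\varU$ violates (3). The set $\prv(\varW_{g'})$ contains only positions with at least one option, so terminal positions are never in it, and terminal positions exist because the game is acyclic. If instead $m_{\max}=0$, then (3) is vacuous but (2) demands $\nxt(\varU)=\varnothing$.

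What actually breaks the ``if'' direction is failure of \emph{covering}. The sets $\varW_0=\varT$ and $\varW_g=\varnothing$ for $g\geq 1$ satisfy (1)--(3) trivially. Yet they are not the SG level sets as soon as some position is non-terminal, since such a position lies in no $\varW_g$. The same example, $\varW_{\varP}=\varT$ and $\varW_{\varN}=\varnothing$, defeats Corollary~\ref{outcome_cond}. The paper's applications escape this only because there the sets come from a single DFA whose final-state sets jointly cover the positions.

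Disjointness, on the other hand, need not be assumed: (1)--(3) together with covering force it. Carry the statement ``$u\in\varW_g \iff g=\varG(u)$'' through the induction instead of presupposing a labelling $\lambda$.
\begin{itemize}
\item For terminal $u$, (1) puts $u$ in $\varW_0$. Condition (3) with $g'=0$ excludes $u$ from every $\varW_g$ with $g\geq 1$, since $u\notin\prv(\varW_0)$.
\item For non-terminal $u$, your own argument shows that $u\in\varW_g$ forces $g=\mex\{\varG(u')\mid u'\in\nxt(u)\}=\varG(u)$. So $u$ lies in at most one set, and by covering in exactly one.
\end{itemize}
So the correct extra hypothesis is $\bigcup_{g}\varW_g=\varU$ alone.

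One smaller point: your forward direction needs the bound $\varG\leq m_{\max}$ for (2). That bound comes from each position having at most $m_{\max}$ options, as stated just before Theorem~\ref{line_sg_dfa}. It does not come from that theorem itself, which concerns FA-line games. The bound is likewise an implicit hypothesis of the corollary.
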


All of these conditions can also be checked using FA operations.
Although this may involve many DFAs ($m_{\max}+1$ DFAs in total), they differ only in their final states; all other structure is shared.
If the number of states still overflows, we may also restrict the positions to quasi-reachable ones as before; the resulting SG-value DFA is then sufficient to determine the SG values of all positions reachable from the empty board.

\subsection{Computational Results}

\subsubsection{Results for $\{1, 2\}$}

If one requires DFAs that determine the outcomes of all positions, our algorithm can only produce outcome DFAs for the case $S=\{1,2\}$ under both normal and misère play.
The former already has sufficiently detailed results in the literature~\cite{Kotzig1995, Kotzig2019}, so here we will focus on the misère-play case.

The outcome DFA we obtained has 52 states, which can be classified as follows:
\tiny
\begin{align*}
    \varU^{\leq}: & \epsilon, \m, \m\x, \m\e; \\
    \varP^>:      & \m\x\e, \m\e\x, \m\e\x\x, \m\e^3, \m\x\e^3, \m\e\x\e^2, \m\e^2\x\e, \m\e^4\x, \m\e^3\x\e\x, \m\e^3\x\e^2, \\
                  & \m\e^6, \m\e^2\x\e\x\e^2, \m\e^3\x\e\x\e, \m\e^3\x\e^3, \m\e^7, \m\e^6\x\e, \m\e^6\x\e\x, \m\e^6\x\e^2, \m\e^9, \m\e^4\x\e\x\e^3, \\
                  & \m\e^6\x\e\x\e, \m\e^6\x\e^3, \m\e^6\x\e\x\e^2;\\
    \varN^>:      & \m\x\x, \m\e^2, \m\x\e^2, \m\e\x\e, \m\e^2\x, \m\x\e^2\x, \m\e^3\x, \m\e^4, \m\e\x\e^3, \m\e^2\x\e\x, \\
                  & \m\e^2\x\e^2, \m\e^3\x\e, \m\e^5, \m\e^2\x\e\x\e, \m\e^4\x\e, \m\e^4\x\e\x, \m\e^4\x\e^2, \m\e^6\x, \m\e^3\x\e\x\e^2, \m\e^4\x\e\x\e, \\
                  & \m\e^4\x\e^3, \m\e^8, \m\e^3\x\e\x\e^3, \m\e^4\x\e\x\e^2, \m\e^6\x\e\x\e^3.
\end{align*}\normalsize
Note that the outcome DFA presented here only determines the outcomes of sufficiently long positions ($n> s_{\max}=2$);
hence the first 4 states correspond to insufficiently long positions, while the remaining states are in bijection with the classes of the generalized misère quotient $\overline{\varU^>}$,
consisting of 23 $\varP$-classes and 25 $\varN$-classes.

The equivalence relation on positions is still generated by all transitions (103 in total). The non-trivial transitions (53 in total) are:
\tiny
\begin{align*}
    & \m\x^2\cdot\x=\m\x^2,~~ \m\x^2\cdot\e=\m\x^2,~~ \m\x\e\cdot\x=\m\e\x,~~ \m\e\x\cdot\x=\m\e\x^2,~~ \\
    & \m\e\x^2\cdot\x=\m\e\x^2,~~ \m\e\x^2\cdot\e=\m\e\x^2,~~ \m\e\x\e\cdot\x=\m\x\e^2\x,~~ \m\e^2\x\cdot\x=\m\x^2,~~\\
    & \m\x\e^2\x\cdot\x=\m\x^2,~~ \m\x\e^2\x\cdot\e=\m\x\e,~~ \m\x\e^3\cdot\x=\m\e\x^2,~~ \m\x\e^3\cdot\e=\m\x\e,~~ \\
    & \m\e\x\e^2\cdot\x=\m\e\x,~~ \m\e^3\x\cdot\x=\m\x^2,~~ \m\e\x\e^3\cdot\x=\m\x^2,~~ \m\e\x\e^3\cdot\e=\m\e\x\e,~~ \\
    & \m\e^2\x\e\x\cdot\x=\m\x^2,~~ \m\e^2\x\e^2\cdot\x=\m\e^2\x,~~ \m\e^2\x\e^2\cdot\e=\m\e^2\x,~~ \m\e^4\x\cdot\x=\m\e\x^2,~~ \\
    & \m\e^5\cdot\x=\m\x^2,~~ \m\e^2\x\e\x\e\cdot\x=\m\e^2\x,~~ \m\e^3\x\e\x\cdot\x=\m\e\x^2,~~ \m\e^3\x\e^2\cdot\x=\m\e^3\x,~~ \\
    & \m\e^2\x\e\x\e^2\cdot\x=\m\e^2\x\e\x,~~ \m\e^2\x\e\x\e^2\cdot\e=\m\e^2\x\e\x,~~ \m\e^3\x\e\x\e\cdot\x=\m\e^3\x,~~ \m\e^3\x\e^3\cdot\x=\m\e\x^2,~~ \\
    & \m\e^3\x\e^3\cdot\e=\m\e^3\x\e,~~ \m\e^4\x\e\x\cdot\x=\m\x^2,~~ \m\e^4\x\e^2\cdot\x=\m\e^4\x,~~ \m\e^6\x\cdot\x=\m\x^2,~~ \\
    & \m\e^7\cdot\x=\m\e\x,~~ \m\e^3\x\e\x\e^2\cdot\x=\m\e^3\x\e\x,~~ \m\e^4\x\e\x\e\cdot\x=\m\e^4\x,~~ \m\e^4\x\e^3\cdot\x=\m\x^2,~~\\
    & \m\e^4\x\e^3\cdot\e=\m\e^4\x\e,~~ \m\e^8\cdot\x=\m\x^2,~~ \m\e^3\x\e\x\e^3\cdot\x=\m\x^2,~~ \m\e^3\x\e\x\e^3\cdot\e=\m\e^3\x\e\x\e,~~ \\
    & \m\e^4\x\e\x\e^2\cdot\x=\m\e^4\x\e\x,~~ \m\e^6\x\e\x\cdot\x=\m\e\x^2,~~ \m\e^6\x\e^2\cdot\x=\m\e^6\x,~~ \m\e^9\cdot\x=\m\e^3\x,~~ \\
    & \m\e^9\cdot\e=\m\e^7,~~ \m\e^4\x\e\x\e^3\cdot\x=\m\e\x^2,~~ \m\e^4\x\e\x\e^3\cdot\e=\m\e^4\x\e\x\e,~~ \m\e^6\x\e\x\e\cdot\x=\m\e^6\x,~~ \\
    & \m\e^6\x\e^3\cdot\x=\m\e\x^2,~~ \m\e^6\x\e^3\cdot\e=\m\e^6\x\e,~~ \m\e^6\x\e\x\e^2\cdot\x=\m\e^6\x\e\x,~~ \m\e^6\x\e\x\e^3\cdot\x=\m\x^2,~~ \\
    & \m\e^6\x\e\x\e^3\cdot\e=\m\e^6\x\e\x\e.
\end{align*}\normalsize

In fact, we can also give the SG-value DFA for this game; its structure is more complex, with 71 states and 141 transitions.
The classification of its states is as follows:
\tiny
\begin{align*}
    \varU^{\leq}: & \epsilon, \m, \m\x, \m\e; \\
    \varU^>_0:    & \m\x\e, \m\e\x, \m\e\x^2, \m\e^3, \m\x\e^3, \m\e\x\e^2, \m\e^2\x\e, \m\e^4\x, \m\e^3\x\e\x, \m\e^3\x\e^2, \\
                  & \m\e^6, \m\e^2\x\e\x\e^2, \m\e^3\x\e\x\e, \m\e^3\x\e^3, \m\e^7, \m\e^6\x\e, \m\e^6\x\e\x, \m\e^6\x\e^2, \m\e^9, \m\e^4\x\e\x\e^3, \\
                  & \m\e^6\x\e\x\e, \m\e^6\x\e^3, \m\e^{10}, \m\e^6\x\e\x\e^2, \m\e^{10}\x, \m\e^{12}, \m\e^{10}\x\e^2, \m\e^{13}, \m\e^{10}\x\e\x\e, \m\e^{10}\x\e\x\e^3; \\
    \varU^>_1:    & \m\x^2, \m\e^2, \m\x\e^2, \m\e\x\e, \m\x\e^2\x, \m\e^3\x, \m\e\x\e^3, \m\e^2\x\e^2, \m\e^3\x\e, \m\e^2\x\e\x\e, \\
                  & \m\e^4\x\e\x, \m\e^6\x, \m\e^3\x\e\x\e^2, \m\e^4\x\e^3, \m\e^8, \m\e^3\x\e\x\e^3, \m\e^8\x\e^2, \m\e^6\x\e\x\e^3, \m\e^8\x\e\x\e, \m\e^{10}\x\e\x, \\
                  & \m\e^{10}\x\e^3; \\
    \varU^>_2:    & \m\e^2\x, \m\e^2\x^2, \m\e^4, \m\e^2\x\e\x, \m\e^5, \m\e^4\x\e, \m\e^4\x\e^2, \m\e^4\x\e\x\e, \m\e^4\x\e\x\e^2, \m\e^8\x, \\
                  & \m\e^8\x\e, \m\e^8\x\e\x, \m\e^{11}, \m\e^{10}\x\e, \m\e^8\x\e\x\e^2, \m\e^{10}\x\e\x\e^2.
\end{align*}\normalsize
Hence in the generalized misère quotient $\varU/\cong^{\varG}$ for SG values of this game, there are respectively $30, 21, 16$ equivalence classes for SG values $0, 1, 2$.

\subsubsection{Outcome DFAs for Quasi-Reachable Positions}

If we restrict positions to quasi-reachable ones, then the outcome DFA must additionally distinguish quasi-reachability, which requires introducing new states;
however, since fewer positions need to be solved, the number of states also decreases accordingly, especially when $S$ is large.
In our computational results, the outcome DFAs restricted to quasi-reachable positions are generally much simpler than those for all positions, with the exception of $S=\{1, 2\}$, where the number of states actually increases.
Nevertheless, even when only determining outcomes of all quasi-reachable positions, our algorithm can only produce outcome DFAs for cases with $s_{\max}\leq 3$; it still fails for non-trivial games with larger $s_{\max}$.

The following table lists the complexity of outcome DFAs for non-trivial games successfully solved by our algorithm
\footnote{In the table, games are denoted by their move sets; those with a superscript $-$ are misère play, otherwise normal play; $|Q_F|$ lists the number of final states of each DFA in order.}:

\begin{center}
    \begin{tabular}{|c|c|c|c|}
        \hline
        Game & $|K|$ & $|\delta|$ & $|Q_F|$ \\
        \hline
        $\{1, 2\}$               & 91  & 139  & 15,25 \\
        $\{1, 2\}^-$             & 91  & 139  & 19,21 \\
        $\{1,3\}\col\{1,2\}$     & 351 & 602  & 58,97 \\
        $\{1,3\}\col\{1,2\}^-$   & 705 & 1226 & 156,113 \\
        $\{1,3\}$                & 248 & 427  & 48,66 \\
        $\{1,3\}^-$              & 426 & 739  & 90,113 \\
        $\{2,3\}\col\{1,2\}$     & 261 & 450  & 42,68 \\
        $\{2,3\}\col\{1,2\}^-$   & 575 & 976  & 107,160 \\
        $\{2,3\}$                & 280 & 469  & 61,69 \\
        $\{2,3\}^-$              & 349 & 591  & 73,92 \\
        $\{1,2,3\}\col\{1,2\}$   & 123 & 220  & 22,21 \\
        $\{1,2,3\}\col\{1,2\}^-$ & 531 & 924  & 88,157 \\
        $\{1,2,3\}\col\{1,3\}$   & 647 & 1118 & 150,157 \\
        $\{1,2,3\}\col\{2,3\}$   & 718 & 1258 & 158,184 \\
        \hline
    \end{tabular}
\end{center}

From these results, one can see that even when restricted to quasi-reachable positions, the number of states of the outcome DFA grows very rapidly with the move set (very likely still super-exponentially).
Moreover, during our solving process we did not impose the restriction that positions in the same equivalence class must share the same rule label ($\rmL$ or $\rmR$); unlike the case of partizan octal games in the previous chapter, it occurs frequently that positions with different labels are equivalent to each other.

The following table gives the outcomes of the empty board (starting from $n=1$) and their periods for all cases with $s_{\max}\leq 3$ (those marked with ? are conjectured periods):
\begin{center}
    \small
    \begin{tabular}{|c|c|c|c|}
        \hline
        Game & Outcomes & $(p, t)$ \\
        \hline
        $\{1,2\}$                 & ${}_{\tabP\tabN\tabP\tabN\tabN\tabN\tabP\tabN\tabN\tabN\tabN\tabN\tabN\tabN\tabN\tabN\ldots}$ & $(7,1)$ \\
        $\{1,2\}^-$               & ${}_{\tabN\tabP\tabN\tabP\tabN\tabN\tabP\tabP\tabN\tabP\tabP\tabN\tabP\tabP\tabN\tabP\ldots}$ & $(5,3)$ \\
        $\{1,3\}\col\{1,2\}$      & ${}_{\tabP\tabN\tabR\tabN\tabN\tabN\tabN\tabR\tabN\tabN\tabN\tabN\tabR\tabN\tabN\tabN\ldots}$ & $(1,5)$ \\
        $\{1,3\}\col\{1,2\}^-$    & ${}_{\tabN\tabP\tabN\tabP\tabR\tabP\tabR\tabR\tabR\tabR\tabR\tabR\tabR\tabR\tabR\tabR\ldots}$ & $(6,1)$ \\
        $\{1,3\}$                 & ${}_{\tabP\tabN\tabP\tabN\tabN\tabN\tabP\tabN\tabP\tabN\tabN\tabN\tabP\tabN\tabP\tabN\ldots}$ & $(0,6)$ \\
        $\{1,3\}^-$               & ${}_{\tabN\tabP\tabN\tabP\tabN\tabN\tabN\tabP\tabN\tabP\tabN\tabN\tabN\tabP\tabN\tabP\ldots}$ & $(0,6)$ \\
        $\{2,3\}\col\{1,2\}$      & ${}_{\tabP\tabN\tabR\tabN\tabL\tabN\tabR\tabN\tabL\tabN\tabR\tabN\tabL\tabN\tabR\tabN\ldots}$ & $(1,4)$ \\
        $\{2,3\}\col\{1,2\}^-$    & ${}_{\tabN\tabP\tabN\tabN\tabN\tabP\tabN\tabN\tabN\tabP\tabN\tabN\tabN\tabP\tabN\tabN\ldots}$ & $(0,4)$ \\
        $\{2,3\}\col\{1,3\}$      & ${}_{\tabP\tabN\tabN\tabN\tabP\tabN\tabN\tabN\tabN\tabN\tabR\tabN\tabN\tabN\tabN\tabN\ldots}$ & $(5,6)?$ \\
        $\{2,3\}\col\{1,3\}^-$    & ${}_{\tabN\tabP\tabP\tabP\tabN\tabP\tabN\tabP\tabP\tabP\tabP\tabP\tabN\tabP\tabP\tabP\ldots}$ & $(5,6)?$ \\
        $\{2,3\}$                 & ${}_{\tabP\tabN\tabP\tabN\tabP\tabN\tabP\tabP\tabN\tabN\tabP\tabP\tabP\tabN\tabN\tabN\ldots}$ & $(11,5)$ \\
        $\{2,3\}^-$               & ${}_{\tabN\tabP\tabN\tabP\tabN\tabN\tabN\tabN\tabN\tabN\tabN\tabN\tabN\tabN\tabN\tabN\ldots}$ & $(4,1)$ \\
        $\{1,2,3\}\col\{1,2\}$    & ${}_{\tabP\tabN\tabP\tabN\tabL\tabL\tabL\tabL\tabL\tabL\tabL\tabL\tabL\tabL\tabL\tabL\ldots}$ & $(4,1)$ \\
        $\{1,2,3\}\col\{1,2\}^-$  & ${}_{\tabN\tabP\tabN\tabP\tabN\tabP\tabN\tabL\tabN\tabL\tabL\tabL\tabL\tabL\tabL\tabL\ldots}$ & $(9,1)$ \\
        $\{1,2,3\}\col\{1,3\}$    & ${}_{\tabP\tabN\tabL\tabN\tabL\tabN\tabL\tabL\tabL\tabL\tabL\tabL\tabL\tabL\tabL\tabL\ldots}$ & $(7,1)$ \\
        $\{1,2,3\}\col\{1,3\}^-$  & ${}_{\tabN\tabP\tabN\tabP\tabN\tabL\tabN\tabL\tabL\tabL\tabL\tabL\tabL\tabL\tabL\tabL\ldots}$ & $(7,1)?$ \\
        $\{1,2,3\}\col\{2,3\}$    & ${}_{\tabP\tabN\tabL\tabN\tabP\tabN\tabL\tabL\tabL\tabL\tabL\tabL\tabL\tabL\tabL\tabL\ldots}$ & $(7,1)$ \\
        $\{1,2,3\}\col\{2,3\}^-$  & ${}_{\tabN\tabP\tabN\tabP\tabN\tabL\tabN\tabP\tabN\tabL\tabL\tabL\tabL\tabL\tabL\tabL\ldots}$ & $(9,1)?$ \\
        $\{1,2,3\}$               & ${}_{\tabP\tabN\tabP\tabN\tabP\tabN\tabN\tabN\tabN\tabN\tabP\tabN\tabN\tabN\tabP\tabN\ldots}$ & $(7,4)?$ \\
        $\{1,2,3\}^-$             & ${}_{\tabN\tabP\tabN\tabP\tabN\tabP\tabN\tabP\tabN\tabN\tabN\tabP\tabN\tabN\tabN\tabN\ldots}$ & $(12,1)?$ \\
   \hline
   \end{tabular}
    \normalsize
\end{center}

In fact, the case $S=\{1, 4\}$ has known results for all reachable positions~\cite{Kotzig2014}, but our algorithm failed to produce them.
This is likely because our quasi-reachable condition still includes too many unreachable positions (especially when the move set is more ``sparse''), leading to an increase in the number of states.
However, further tightening the restriction may cause the set of positions to fail to be closed under $\nxt(\cdot)$, or to fail to be a regular language, which would invalidate our verification algorithm.

\subsubsection{SG Values for Quasi-Reachable Positions}

If we restrict positions to quasi-reachable ones, our algorithm can also produce outcome DFAs for the cases $S=\{1, 3\}$ and $\{2, 3\}$. These are shown below:

\begin{center}
    \begin{tabular}{|c|c|c|c|}
        \hline
        Game & $|Q|$ & $|\delta|$ & $|Q_F|$\\
        \hline
        $\{1,2\}$ & 49  & 81  & 14,21,20 \\
        $\{1,3\}$ & 177 & 313 & 64,79,34 \\
        $\{2,3\}$ & 200 & 350 & 61,73,65 \\
        \hline
    \end{tabular}
\end{center}

\begin{center}
    \small
    \begin{tabular}{|c|c|c|c|}
        \hline
        Game & SG sequence & $(p, t)$ \\
        \hline
        $\{1,2\}$ & ${}_{0,1,0,1,2,2,0,1,2,1,2,2,1,1,2,1,1,2,1,1,2,1,1,2,1,1\ldots}$ & $(11,3)$ \\
        $\{1,3\}$ & ${}_{0,1,0,1,2,1,0,1,0,1,2,1,0,1,0,1,2,1,0,1,0,1,2,1,0,1\ldots}$ & $(0,6)$ \\
        $\{2,3\}$ & ${}_{0,1,0,1,0,1,0,0,2,2,0,0,0,2,2,2,0,0,2,2,2,0,0,2,2,2\ldots}$ & $(11,5)$ \\
        \hline
   \end{tabular}
    \normalsize
\end{center}

Finally, it is worth mentioning that for some move sets of larger size, the SG values appear to exhibit very simple periodicity:
when the board is sufficiently long, positions of even length have SG value 1, and otherwise 0. For example:
\begin{center}
    \begin{tabular}{|c|c|}
        \hline
        Game & SG sequence\\
        \hline
        $\{2,3,4,6\}$    & ${}_{0,1,0,1,0,1,0,1,0,1,0,1,0,1,0,1,0,1,0,1,0,1,0,1,0,1,\ldots}$ \\
        $\{1,2,3,4,6\}$  & ${}_{0,1,0,1,0,1,0,1,0,1,0,1,0,1,0,1,0,1,0,1,0,1,0,1,0,1,\ldots}$ \\
        $\{1,2,3,5,6\}$  & ${}_{0,1,0,1,0,1,0,1,0,1,0,1,0,1,0,1,0,1,0,1,0,1,0,1,0,1,\ldots}$ \\
        $\{1,2,3,4,7\}$  & ${}_{0,1,0,1,0,1,0,1,0,1,0,1,0,1,0,1,0,1,0,1,0,1,0,1,0,1,\ldots}$ \\
        \hline
    \end{tabular}
\end{center}
There is a simple but not rigorous explanation: if the available step sizes are sufficiently abundant that neither player can block the other's moves, then the token will visit every cell, and the parity of the number of cells determines which player makes the last move.
We also attempted to solve such cases, but all of them failed due to state-count overflow.
We suspect that Kotzig's nim exhibiting such periodicity are in fact still extremely complex, and that this apparent periodicity may be spurious: longer boards beyond the computable range may break the pattern.

\section{Open Problems}

In addition to the problems already listed above, in this chapter we present some further open problems that we consider suitable for future research directions.

\subsection{More Games}

The most obvious question is of course:
\begin{problem}
    Besides the games mentioned in this paper, what other games can be solved using finite-automaton methods?
\end{problem}
Our algorithm can be attempted on virtually any game whose positions are based on multi-heaps or one-dimensional boards, as long as its rules can be expressed using operations that are closed on regular languages; it is just not guaranteed to always succeed in producing an outcome DFA.

In the chapter on Kotzig's nim, we noted that if the SG values of a game are bounded, it may be possible to determine the SG values of positions using finitely many DFAs.
For games with unbounded SG values, if their SG values exhibit sufficiently strong regularities, solving them with finite automata is not hopeless; one only needs to find a way to compress the SG values according to their patterns.

For example, \textit{hexadecimal games}, which are octal games extended by allowing a heap to be split into three nonempty parts, include special cases whose SG values, though unbounded, are known to exhibit \textit{arithmetic periodicity},
meaning that each term in a period is always larger than the corresponding term in the previous period by a fixed value (called the \textit{amplitude})~\cite{2004Hexa}.
For instance, the SG sequence of \textbf{0.3F} is
\[
    0, 1, 2, 0, 1, 2, 3, 4, 5, 3, 4, 5, 6, 7, 8, 6, 7, 8, \ldots,
\]
with preperiod, period and amplitude $0, 6, 3$, respectively.

Besides hexadecimal games, Wythoff's game also exhibits arithmetic periodicity.
In proving the arithmetic periodicity of Wythoff's game,~\cite{2002Wyth} introduced the mapping
\[
    \varH(m, n) \coloneq \varG(m,n) - m + 2n
\]
to successfully compress the two-dimensional SG values $(m,n)$ into a bounded range, making them recognisable by finite automata.

Beyond arithmetic periodicity, hexadecimal games have also been found to exhibit other more complex patterns, such as so-called \textit{ruler regularity}~\cite{2020Hexa}.

The SG values of impartial hexadecimal games under normal play are given by the Sprague--Grundy theorem, but research on misère play and partizan versions remains a blank.
If some special cases of these variants still exhibit sufficiently strong regularities, similar compression of their SG values may be possible.

\begin{problem}
    Can this method be generalized to determine the outcomes or SG values of hexadecimal games exhibiting arithmetic periodicity (or other special periodicities)?
\end{problem}

\subsection{Stronger Automata}

The rules of many games cannot be expressed as operations on finite automata.
Other games, though expressible, have outcomes that cannot be determined by finite automata.
For example, the algebraically periodic octal games mentioned earlier exhibit clear regularities in their outcomes under misère play, but these go beyond the class of languages recognisable by finite automata.
This is because they require comparing the largest heap with the sum of the remaining heaps, but finite automata cannot perform counting (unless only finitely many numbers are involved), and hence cannot compare magnitudes.
With this in mind:

\begin{problem}
    Can our approach be generalized to stronger automata?
\end{problem}

The most common automata stronger than finite automata are \textit{deterministic pushdown automata} (DPDA), which have an unbounded last-in-first-out stack for recording information, and can recognise many non-regular languages, including palindromes, balanced parentheses, and many languages that depend on counting ability.
However, the properties of DPDAs are far less well-behaved than those of FAs: the languages they recognise (called \textit{context-free languages}) do not enjoy such good closure properties under various operations, and are sometimes even non-computable.
Their nondeterministic counterpart, NPDA, is even more powerful, but computability of various operations becomes correspondingly worse.

Given the trade-off between power and computability, automata intermediate between FAs and NPDAs seem more worthy of further exploration,
such as \textit{visibly pushdown automata} (VPDA)~\cite{VPDA2004} and \textit{one-counter automata} (OCA)~\cite{OCA2018}.

Our earlier argument that the outcomes of misère octal games cannot be determined by FAs was not very rigorous.
We do not intend to expand on this discussion here, but instead subsume it into the following open problem:

\begin{problem}
    For a given class of automata (especially FAs), is there a rigorous method to prove that the outcomes of a given game cannot be determined by automata in that class?
\end{problem}

This would help us better understand the computational complexity of games.
Moreover, if some octal game could be proven FA-insoluble, that would suffice to disprove Guy's conjecture, though this appears very difficult.

\subsection{Weak Solutions}

Sometimes a complete solution of a game is very complex, but we may settle for a \textit{weak solution}: a solution for a special subset of positions that includes the initial positions of interest.
The quasi-reachable positions defined in this paper are an instance of this approach.

Another example is \textit{Treblecross}. The rules are that two players alternately place their own tokens on a one-dimensional board, with the restriction that no three consecutive tokens may be formed; the player who cannot move loses.
\cite{2026TrebleCross} proved that the outcome of an empty board of length $n$ is determined by $n \bmod 10$: it is a $\varP$-position when the remainder is $0,2,3,6$ or $9$, and an $\varN$-position otherwise.
Their proof proceeded by giving the outcomes of a special class of positions called ``regular positions'', and showing that from an empty board, one player can always keep the position within the set of regular positions.

In fact, the set of all positions in the above class forms a regular language, and there exists an outcome DFA restricted to this class.
However, unlike quasi-reachable positions, this regular language is not closed under $\nxt(\cdot)$.
Moreover, our algorithm fails to produce an outcome DFA for all positions of this game, and from the growth trend of the state queue, it is likely that no such DFA exists at all.
Therefore we ask:

\begin{problem}
    Can the algorithm be improved so that it can automatically find and solve weak solutions in the above situation?
\end{problem}

\subsection{Two-Dimensional Generalizations}

Many impartial games are played on two-dimensional boards, including classical games such as Go, Gomoku, and Hex. Hence we ask:

\begin{problem}
    Does there exist a two-dimensional generalization of our approach?
\end{problem}

Such a generalization is likely to be quite difficult in practice.
Although there exist two-dimensional analogues of regular languages and finite automata, such as \textit{four-way automata} (4FA)~\cite{2dFA1967} and \textit{recognisable picture languages} (REC)~\cite{2dFA1997},
their properties are far worse than their one-dimensional counterparts, and many related problems become non-computable.
Nevertheless, for many open problems on two-dimensional board games, this seems a promising avenue,
especially for games like Gomoku, which have ``local'' move rules and winning conditions, and appear well-suited for representation by specific two-dimensional variants of finite automata.

\section*{Acknowledgments}
We are grateful to Alex Gu and Rafał Wrona for their careful reading of the preprint and for bringing an error in Corollary~\ref{sg_cond} to our attention.

\section*{Declaration of AI-assisted Technologies}

The original text of this manuscript was entirely composed and subsequently checked by the author(s).
Certain tables and figures were produced from the execution of Python scripts.
DeepSeek was employed exclusively for the purposes of refining language and rectifying typographical errors; it was not utilized for the generation of textual content.

\bibliographystyle{plain}
\bibliography{references}

@article{Octal1956,
  title = {The G-values of various games},
  author = { Guy, Richard K. and Smith, Cedric A. B. },
  journal = {Mathematical Proceedings of the Cambridge Philosophical Society},
  volume = {52},
  number = {3},
  pages = {514-526},
  year = {1956},
}

@article{Part1976,
  title = {Impartial and partisan games},
  author = { Austin, Richard B. },
  journal = {Mathematics and Statistics University of Calgary},
  year = {1976},
}

@article{PartOctal1987,
  author = {Fraenkel, A. S. and Kotzig, A.},
  title = {Partizan octal games: Partizan subtraction games},
  journal = {International Journal of Game Theory},
  year = {1987},
  volume = {16},
  number = {2},
  pages = {145--154},
  doi = {10.1007/BF01780638},
  mrnumber = {MR0899792},
  zbmath = {0662.90095},
  publisher = {Springer}
}

@article{MisQuot2006,
  title = {Misere quotients for impartial games},
  author = { Plambeck, Thane E. and Siegel, Aaron N. },
  year = {2006},
}

@misc{PartMisQuot2007,
  title = {Misère canonical forms of partizan games},
  author = { Siegel, Aaron N. },
  year = {2007},
}

@article{PartMisQuot2015,
  title = {Peeking at partizan misère quotients},
  author = { Allen, Meghan R. },
  journal = {Games of No Chance 4},
  volume = {63},
  year = {2015},
}

@article{PartMis2010,
  title = {An Investigation of Partizan Misere Games},
  author = { Allen, Meghan R. },
  journal = {Mathematics},
  year = {2010},
}

@article{PartMisQuot2013,
  author = {Milley, Rebecca and Renault, Gabriel},
  title = {Dead ends in misère play: The misère monoid of canonical numbers},
  journal = {Discrete Mathematics},
  year = {2013},
  volume = {313},
  number = {20},
  pages = {2223--2231},
  publisher = {Elsevier BV},
  doi = {10.1016/j.disc.2013.05.023},
  url = {http://dx.doi.org/10.1016/j.disc.2013.05.023},
  issn = {0012-365X}
}

@inproceedings{PartMisQuot2023,
  title = {On the General Dead-Ending Universe of Partizan Games},
  author = {Aaron N. Siegel},
  year = {2023},
  url = {https://api.semanticscholar.org/CorpusID:266573600}
}

@article{VertexDel2013,
  title = {Vertex Deletion games with Parity rules},
  author = { Nowakowski, Richard J and Ottaway, Paul },
  journal = {Integers},
  volume = {5},
  number = {2},
  year = {2013},
}

@article{Kotzig1995,
  author    = {Fraenkel, Aviezri S. and Jaffray, Alan and Kotzig, Anton and Sabidussi, Gert},
  title     = {Modular Nim},
  journal   = {Theoretical Computer Science},
  year      = {1995},
  volume    = {143},
  number    = {2},
  pages     = {319--333},
  doi       = {10.1016/0304-3975(94)00260-p},
  publisher = {Elsevier BV}
}

@article{Kotzig2005,
  title={Winning Ways, for Your Mathematical Plays},
  author={ Berlekamp, Elwyn R  and  Conway, John H  and  Guy, Richard K },
  journal={Math Horizons},
  volume={13},
  number={2},
  pages={28-29},
  year={2005},
}

@article{Kotzig2014,
  author    = {Tan, Xin Lu and Ward, Mark Daniel},
  title     = {On Kotzig's Nim},
  journal   = {Integers},
  year      = {2014},
  volume    = {14},
  pages     = {Paper G06, 27 p.},
  mrnumber  = {MR3278116},
  zbmath    = {1314.91056}
}

@article{Kotzig2019,
  title={Sprague-Grundy Values of Modular Nim},
  author={ Horrocks, D. G.  and  Horrocks, Jonathan H. },
  journal={Integers},
  volume={19},
  pages={G1},
  year={2019},
}

@article{Kotzig2025,
  author    = {Arun, Srinivas},
  title     = {Geography, Kotzig's Nim, and variants},
  journal   = {Theoretical Computer Science},
  year      = {2025},
  volume    = {1023},
  pages     = {Article No. 114957, 14 p.},
  doi       = {10.1016/j.tcs.2024.114957},
  publisher = {Elsevier}
}

@article{GONC,
  title={Unsolved Problems in Combinatorial Games},
  author={ Guy, Richard K. },
  journal={R.j.nowakowski Ed.games of},
  volume={329},
  pages={475--491},
  year={1995},
}

@article{subs_game,
  title={A brief conversation about subtraction games},
  author={ Larsson, Urban  and  Saha, Indrajit },
  year={2024},
}

@article{2025Wyth,
  title={Variants of Wythoff game with terminal positions or blocking maneuvers},
  author={ Renard, Antoine  and  Rigo, Michel },
  year={2025},
}

@article{2025Auto,
  title={Automatic proofs in combinatorial game theory},
  author={ Mignoty, Bastien  and  Renard, Antoine  and  Rigo, Michel  and  Whiteland, Markus A. },
  journal={International Journal of Game Theory},
  volume={54},
  number={2},
  pages={1-32},
  year={2025},
}

@article{1984Algper,
  author = {Allemang, D., T.},
  title = {Machine computation with finite games},
  journal = {Master’s thesis, Trinity College, Cambridge},
  year = {1984},
}

@article{2010Wyth,
  title={Extensions and restrictions of Wythoff's game preserving its P positions},
  author={ric Duchêne and  Fraenkel, Aviezri S.  and  Nowakowski, Richard J.  and  Rigo, Michel },
  journal={Journal of Combinatorial Theory Series A},
  volume={117},
  number={5},
  pages={545-567},
  year={2010},
}

@article{1982Wyth,
  title={How to Beat Your Wythoff Games' Opponent on Three Fronts},
  author={ Fraenkel, Aviezri S. },
  journal={The American Mathematical Monthly},
  volume={89},
  number={6},
  pages={353-361},
  year={1982},
}

@article{2002Wyth,
  title={A simple FSM-based proof of the additive periodicity of the Sprague-Grundy function of Wythoff's game},
  author={ Landman, Howard A. },
  year={2002},
}

@misc{2026TrebleCross,
      title={A Weak Solution of Inverse Treblecross},
      author={Kai Liang and Muxi Li},
      year={2026},
      eprint={2604.16759},
      archivePrefix={arXiv},
      primaryClass={math.CO},
      url={https://arxiv.org/abs/2604.16759},
}

@article{2004Hexa,
title = {Periodicity and arithmetic-periodicity in hexadecimal games},
journal = {Theoretical Computer Science},
volume = {313},
number = {3},
pages = {463-472},
year = {2004},
issn = {0304-3975},
doi = {https://doi.org/10.1016/j.tcs.2003.08.013},
url = {https://www.sciencedirect.com/science/article/pii/S0304397503005942},
author = {S. Howse and R.J. Nowakowski},
}

@article{2020Hexa,
 author = {Dailly, Antoine and Duch{\^e}ne, {\'E}ric and Larsson, Urban and Paris, Gabrielle},
 title = {Partition games},
 fjournal = {Discrete Applied Mathematics},
 journal = {Discrete Appl. Math.},
 issn = {0166-218X},
 volume = {285},
 pages = {509--525},
 year = {2020},
 language = {English},
 doi = {10.1016/j.dam.2020.05.032},
 zbMATH = {7242117},
 Zbl = {1452.91056}
}

@inproceedings{VPDA2004,
  author    = {Rajeev Alur and P. Madhusudan},
  title     = {Visibly pushdown languages},
  booktitle = {Proceedings of the 36th Annual {ACM} Symposium on Theory of Computing},
  pages     = {202--211},
  year      = {2004},
  doi       = {10.1145/1007352.1007390},
}

@article{OCA2018,
  author    = {Vojtěch Forejt and Petr Jančar and Stefan Kiefer and James Worrell},
  title     = {Game characterization of probabilistic bisimilarity, and applications to pushdown automata},
  journal   = {Logical Methods in Computer Science},
  volume    = {14},
  number    = {4},
  year      = {2018},
}

@incollection{2dFA1967,
  author    = {M. Blum and C. Hewitt},
  title     = {Automata on a 2-dimensional tape},
  booktitle = {IEEE Symposium on Switching and Automata Theory},
  year      = {1967},
  pages     = {155-160},
}

@incollection{2dFA1997,
  author    = {Dora Giammarresi and Antonio Restivo},
  title     = {Two-dimensional languages},
  booktitle = {Handbook of Formal Languages},
  volume    = {3},
  pages     = {215-267},
  year      = {1997},
  publisher = {Springer},
}

\end{document}